\numberwithin{equation}{subsection}
\theoremstyle{plain}
\newtheorem{mainthm}{Theorem}
\newtheorem{thm}[equation]{Theorem}
\newtheorem{prop}[equation]{Proposition}
\newtheorem{lem}[equation]{Lemma}
\newtheorem{cor}[equation]{Corollary}
\theoremstyle{definition}
\newtheorem{defn}[equation]{Definition}
\newtheorem{example}[equation]{Example}
\newtheorem{examples}[equation]{Examples}
\newtheorem{rem}[equation]{Remark}
\newcommand{\bC}{\mathbb{C}}
\newcommand{\bH}{\mathbb{H}}
\newcommand{\bN}{\mathbb{N}}
\newcommand{\bR}{\mathbb{R}}
\newcommand{\bZ}{\mathbb{Z}}
\newcommand{\cF}{\mathcal{F}}
\newcommand{\cK}{\mathcal{K}}
\newcommand{\cO}{\mathcal{O}}
\newcommand{\cR}{\mathcal{R}}
\newcommand{\cS}{\mathcal{S}}
\newcommand{\mFM}{\mathfrak{M}}
\newcommand{\mFX}{\mathfrak{X}}
\newcommand{\mFY}{\mathfrak{Y}}
\newcommand{\mFc}{\mathfrak{c}}
\newcommand{\mFh}{\mathfrak{h}}
\newcommand{\mFm}{\mathfrak{m}}
\newcommand{\mFp}{\mathfrak{p}}
\newcommand{\mFv}{\mathfrak{v}}
\DeclareMathOperator{\ann}{ann}
\DeclareMathOperator{\edim}{edim}
\DeclareMathOperator{\eu}{eu}
\DeclareMathOperator{\gr}{gr}
\DeclareMathOperator{\im}{im}
\DeclareMathOperator{\len}{length}
\DeclareMathOperator{\ord}{ord}
\DeclareMathOperator{\rk}{rk}
\DeclareMathOperator{\supp}{supp}
\newcommand{\defterm}[1]{{\it #1}} 
\newcommand{\dimlen}{\dim_\bC}
\newcommand{\ic}[1]{{\overline{#1}}} 
\newcommand{\pvv}[1]{{\bf #1}} 
\newcommand{\kdash}{$\cK\!\!$-} 
\newcommand{\rdash}{$\cR$-}
\newcommand{\overringprint}[1]{\protect\ifnum #1 = 1 O1a\protect\fi\protect\ifnum #1 = 2 O1b\protect\fi\protect\ifnum #1 = 3 O2a\protect\fi\protect\ifnum #1 = 4 O2b\protect\fi\protect\ifnum #1 = 5 O3\protect\fi}
\newcommand{\wtprint}[1]{\protect\ifnum #1 = 1 W1a\protect\fi\protect\ifnum #1 = 2 W1b\protect\fi\protect\ifnum #1 = 3 W2a\protect\fi\protect\ifnum #1 = 4 W2b\protect\fi\protect\ifnum #1 = 5 W3\protect\fi}
\newcommand{\wtaxislabels}[2]{\rule{0pt}{.9\normalbaselineskip} \!\!\!{\phantom{|}^{#2}}_{#1}\!\!\!} 
\author[Alex Hof]{Alex Hof$^*$}
\address{$^*$Alfr\'ed R\'enyi Institute of Math., Re\'altanoda utca 13-15, H-1053, Budapest, Hungary}
\email{alexhof@renyi.hu}
\author[Andr\'as N\'emethi]{Andr\'as N\'emethi$^\dag$}
\thanks{The authors are partially supported by ``\'Elvonal (Frontier)'' Grant KKP 144148}
\address{$^\dag$Alfr\'ed R\'enyi Institute of Math.,
    Re\'altanoda utca 13-15, H-1053, Budapest, Hungary \newline
    \hspace*{3mm} ELTE - Univ. of Budapest, Dept. of Geo.,
    P\'azm\'any P\'eter s\'et\'any 1/A, 1117, Budapest, Hungary \newline \hspace*{3mm}
    BBU - Babe\c{s}-Bolyai Univ., Str, M. Kog\u{a}lniceanu 1, 400084 Cluj-Napoca, Romania
    \newline \hspace*{3mm}
    BCAM - Basque Center for Applied Math.,
    Mazarredo, 14 E48009 Bilbao, Basque Country, Spain}
\email{nemethi.andras@renyi.hu}
\title[Cohen-Macaulay Type via Lattice Homology and the Motivic Poincar\'e Series]
{Cohen-Macaulay Type via Lattice Homology and the Motivic Poincar\'e Series}
\begin{document}
\keywords{Curve singularities, Cohen-Macaulay modules, lattice homology, filtered lattice homology, motivic Poincar\'e series, birational dominance, finite Cohen-Macaulay type, tame Cohen-Macaulay type, wild Cohen-Macaulay type, classification of plane curve singularities.}
\subjclass[2020]{Primary. 13C14, 13D40, 32S05, 32S10; Secondary. 14B05, 32S25, 57K10, 57K14.}
\begin{abstract}
    We give results on reduced complex-analytic curve germs which relate their indecomposable maximal Cohen-Macaulay (MCM) modules to their lattice homology groups and related invariants, thereby providing a connection between the algebraic theory of MCM modules and techniques arising from low-dimensional topology. In particular, we characterize the germs $(C, o)$ of finite Cohen-Macaulay type in terms of the lattice homology $\bH_*(C, o)$, and those of tame type in terms of the lattice homologies and associated spectral sequences of $(C, o)$ and its subcurves, including the distinction between germs of finite and infinite growth. As a consequence of these results, we obtain corresponding characterizations of a germ's Cohen-Macaulay type in terms of the motivic Poincar\'e series.
\end{abstract}
\maketitle

\section{Introduction}

The \defterm{maximal Cohen-Macaulay (MCM) modules} over a given Noetherian local ring are those with depth equal to the largest possible value, its Krull dimension $d$---if one wishes, these can be characterized more geometrically as the modules with local cohomology concentrated in degree $d$. The project of understanding such rings, and the space germs to which they correspond, through classification of their MCM modules is a longstanding one, originating in the works of Auslander, Drozd, Reiten, and Roiter (\cite{AartI,AartII,ARartIII,ARartIV,ARartV,ARartVI,DR}) from the perspective of representation theory and subsequently developed further by these mathematicians and others, including Artin, Buchweitz, Dieterich, Eisenbud, Green, Greuel, Herzog, Kn\"orrer, Reiner, and Schreyer---for these references, and a general introduction to the topic, see \cite{Yo} or \cite{LW}.

Our particular interest will be in these questions as they pertain to reduced complex-analytic curve germs and the corresponding convergent power series rings. In this setting, as in others (cf. \cite{Dfgtw}), the objects can be grouped into three main classes: those of \defterm{finite Cohen-Macaulay (CM) type}, which admit only finitely many indecomposable MCM modules up to isomorphism, those of \defterm{tame CM type}, which have infinitely many such modules but only finitely many 1-parameter families of them of any given rank, and those of \defterm{wild CM type}, which exhibit families of unbounded dimension in each rank (\cite{DG1}). These properties reflect the geometry of such curve germs in surprising ways---for example, it turns out that those which are Gorenstein and of finite CM type are precisely the simple (i.e., $ADE$) germs.

Among the main general tools for identifying the CM types of germs in this context are \defterm{dominance conditions}, which characterize germs of finite and tame CM type as those which birationally dominate certain foundational plane curve germs (respectively, the $ADE$ and $T_{pq}$ germs---see Propositions \ref{prop:finitedom} and \ref{prop:tamedom}), and \defterm{overring conditions}, which give algebraic criteria in terms of the interactions between various subrings of the integral closures of the local rings under consideration (see Propositions \ref{prop:finiteoc} and \ref{prop:tameoc}). Though powerful, both types of conditions have limitations---the dominance conditions are conceptually straightforward but difficult to verify in practice, while the overring conditions are more computationally tractable but theoretically somewhat opaque, and both are formulated using analytic objects (subrings and overrings) which exhibit positive-dimensional moduli. Our purpose in this note will be to supplement them with easy-to-check conditions more readily relatable to other analytic and topological properties of the germ in question. These will be phrased in terms of discrete invariants of curve germs which have been the subject of recent work in a variety of other directions and thus serve as a bridge between the algebraic considerations of Cohen-Macaulay theory and the cohomology theories and other data arising from low-dimensional topology---specifically, these invariants are the \defterm{lattice homology groups} (together with the associated spectral sequences) and \defterm{motivic Poincar\'e series}.

Lattice (co)homology theories originated in the study of normal surface singularities (\cite{Nlattice}), where they provide a singularity-theoretic means of computing 3-manifold invariants of a given singularity's link, such as its Heegaard Floer homology (\cite{Zemke}) and Seiberg-Witten invariant (\cite{Nseibwit}); later work has produced similar definitions in a variety of related contexts, including isolated singularities of sufficiently large dimension (\cite{AgNeHigh}) and singularities endowed with finite collections of generalized valuations (\cite{NS}). For our purposes, the relevant invariant will be the \defterm{analytic lattice homology of reduced curve singularities}, which was essentially introduced in \cite{AgNeIV} (see also \cite{AgNeV,KNS2}), albeit in cohomological terms; the homological formulation, as well as its \defterm{level filtration} and the corresponding spectral sequence, which we will also make use of, can be found in \cite{NFilt}.

The essential concept of this branch of the theory is that the components of the normalization of a given curve germ give rise to a lattice, which is endowed with a \defterm{weight filtration} determined by the interaction of the corresponding valuations with the normalization map; the various pieces of homological information we consider then arise as invariants of the corresponding filtered cubical complex. This provides a powerful general framework for encoding invariants of the curve itself in terms of discrete data---for example, the delta invariant appears as the Euler characteristic of the lattice homology. Likewise, the motivic Poincar\'e series can be obtained as the result of making certain substitutions in a power series determined by a refinement of the spectral sequence's $E^1$-page, which was demonstrated in \cite{NFilt}; the core ideas behind the relationship originally appeared in \cite{GorNem2015}, where a relationship between the spectral sequence and the Heegaard Floer link homology of the link is established, albeit in different language, for algebraic plane curve germs.

Our first step in fitting the CM types of curve germs into this picture takes the form of the following theorem, which uses machinery introduced in Definitions \ref{def:wt} (the weight function $w_0^C$) and \ref{def:mincycle} (the notion of a minimal spectral cycle), as well as the classification discussed in Subsection \ref{subsec:class} (note our inclusion of the nonsingular germ $A_0$ in the $ADE$ family):

\begin{mainthm} \label{thm:finite}
    Let $(C, o)$ be a reduced complex-analytic curve germ. Then $(C, o)$ is of finite Cohen-Macaulay type if and only if $\min w_0^C \ge -1$.
    
    More specifically, we have the following equivalences:
    \begin{itemize}
        \item $(C, o)$ birationally dominates a simple germ of type $A_n$ (for some $n \ge 0$) if and only if $\min w_0^C = 0$. (In fact, in this case, $(C, o)$ is an $A_n$ germ.)
        \item $(C, o)$ birationally dominates a simple singularity of type $D_n$ (for some $n \ge 4$), but no simple germ of type $A_n$ (for any $n \ge 0$), if and only if $\min w_0^C = -1$ and $(C, o)$ has a minimal spectral 1-cycle of weight 0. 
        \item $(C, o)$ birationally dominates a simple singularity of type $E_6$, $E_7$, or $E_8$, but no simple germ of type $A_n$ (for any $n \ge 0$) or $D_n$ (for any $n \ge 4$), if and only if $\min w_0^C = -1$ and $(C, o)$ does not have a minimal spectral 1-cycle of weight 0.
    \end{itemize}
\end{mainthm}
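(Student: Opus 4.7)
The plan is to use the dominance characterization of finite CM type from Proposition \ref{prop:finitedom} as the backbone, and translate the trichotomy between $A_n$, $D_n$, and $E_n$ dominance into conditions on $w_0^C$ and minimal spectral $1$-cycles. The first step is to establish a monotonicity principle: if $(C,o)$ birationally dominates $(C',o')$, then $\cO_{C,o} \supseteq \cO_{C',o'}$ sit inside a common integral closure (after identification of branches), and since $w_0$ encodes the codimension defect of the local ring inside the normalization data, passing to the larger ring can only raise the weight. This should yield $\min w_0^C \ge \min w_0^{C'}$, and transport existence of a minimal spectral $1$-cycle of weight $0$ in the appropriate direction.

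Second, I would carry out direct base-case computations of $w_0$ and the minimal spectral $1$-cycles on each $ADE$ family, working from the explicit equations and the resulting semigroups of values on the branches of the normalization. I expect to find $\min w_0^{A_n} = 0$, attained in a way reflecting the rigidity of $A_n$ germs; $\min w_0^{D_n} = -1$ with a distinguished minimal spectral $1$-cycle of weight $0$ arising from the ``node-like'' interaction of the branches; and $\min w_0^{E_k} = -1$ for $k = 6, 7, 8$ but with all minimal spectral $1$-cycles of strictly negative weight.

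Third, I would combine these via Proposition \ref{prop:finitedom}: the forward implications in each bullet follow from the base-case computations combined with monotonicity, while the reverse implications use the dominance classification together with the observation that $\min w_0^C \ge -1$ forces finite CM type (so dominance of some $ADE$ germ holds), after which the trichotomy picks out which family. The sharpening in the $A_n$ clause---that $\min w_0^C = 0$ forces $(C,o)$ itself to be an $A_n$ germ---should follow from the fact that this condition pins $\cO_{C,o}$ down to a maximal-weight subring of its normalization, leaving only the $A_n$ candidates.

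The principal obstacle will be the $D_n$ versus $E_k$ separation at the level of minimal spectral $1$-cycles: showing that $E_6, E_7, E_8$ genuinely admit no minimal spectral $1$-cycle of weight $0$ requires careful enumeration of the small cycles in their lattice complexes and verification that the weight filtration keeps them all below $0$. Proving that this obstruction is genuinely monotone under birational dominance---so that no germ dominating $D_n$ could instead satisfy the $E$-side condition, and vice versa---is the key technical point that ties the three bullets together rigorously.
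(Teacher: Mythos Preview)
Your plan has a genuine circularity in the reverse direction of the main statement. You say the reverse implications of the bullets ``use the dominance classification together with the observation that $\min w_0^C \ge -1$ forces finite CM type,'' and then the trichotomy picks out which $ADE$ family is dominated. But ``$\min w_0^C \ge -1$ forces finite CM type'' \emph{is} the main statement you are trying to prove, and nothing in your plan establishes it independently. Monotonicity plus the base-case computations only give you the forward direction (finite CM type $\Rightarrow$ dominates some $ADE$ germ $\Rightarrow \min w_0^C \ge -1$); they say nothing about why a germ with $\min w_0^C \ge -1$ must dominate an $ADE$ germ at all. The paper closes this gap not through dominance but through the overring characterization of Proposition~\ref{prop:finiteoc}: the conditions $\dim_{\bC} \overline{\cO}/\mFm\overline{\cO} \le 3$ and $\dim_{\bC}(\mFm\overline{\cO}+\cO)/(\mFm^2\overline{\cO}+\cO) \le 1$ translate directly into $w_0(m) \ge -1$ and $w_0(2m) \ge 0$ via (\ref{eq:multw}) and (\ref{eq:multw2}), and these are implied by $\min w_0^C \ge -1$. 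Without invoking Proposition~\ref{prop:finiteoc} (or, alternatively, constructing a dominating $ADE$ germ by hand from the weight hypotheses in each case, as the paper does for $D_n$ in Proposition~\ref{prop:domD}), your argument does not close.

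A secondary issue: your intuition that ``existence of a minimal spectral $1$-cycle of weight $0$ transports under dominance'' points in the wrong direction. If $(C,o)$ dominates $D_n$, then $w_0^{D_n} \le w_0^C$ pointwise, so $S_n^C \subseteq S_n^{D_n}$, and the induced map on spectral sequence terms goes from $(C,o)$ to $D_n$, not the reverse. The paper instead argues at the level of individual lattice points: it uses $w_0^C(m + e^i + e^j) \ge w_0^{D_n}(m + e^i + e^j) = 1$ at specific vertices to force the cycle to exist for $(C,o)$. Conversely, for the reverse implication (cycle $\Rightarrow$ dominates $D_n$), the paper does not use monotonicity at all but rather extracts explicit ring elements via Lemma~\ref{lem:eltfromwt} and builds a $D_n$ subring of $\cO$ by hand. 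Your plan does not anticipate this constructive step.
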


\begin{proof}
    This will follow by Theorem \ref{thm:fintypewtbd} and Propositions \ref{prop:domA} and \ref{prop:domD}.
\end{proof}

Thus, since the minimum value of the weight function is visible on the level of the lattice homology $\bH_*(C, o)$ by (\ref{eq:minweights}), computing these homology groups is enough to determine whether $(C, o)$ is of finite CM type. (Alternately, we will see in Proposition \ref{prop:fintypeptwts} that one can also read this information directly from the weights of certain lattice points.) Moreover, the latter part of the theorem indicates that we can distinguish between the different dominance classes among the finite-CM-type germs by supplementing the homology with fairly coarse data about the shape of the $E^1$-page of the associated spectral sequence; this will be needed for our subsequent discussion regarding the tame classification.

To wit, we obtain a characterization of tame-CM-type germs, and of the subclass of those which are of \defterm{finite growth}, in these terms---note for the latter that we use the notion of ``maximal rank" given in Definition \ref{def:maxrk}:

\begin{mainthm} \label{thm:tame}
    Let $(C, o)$ be a reduced complex-analytic curve germ, with $(C, o) = \bigcup_{i=1}^r (C_i, o)$ the decomposition into irreducible components. Then $(C, o)$ is of tame Cohen-Macaulay type if and only if all of the following conditions hold:
    \begin{enumerate}[label=\normalfont(\alph*), itemsep=1mm]
        \item $\min w_0^C = -2$.
        \item $(C, o)$ has a minimal spectral 1-cycle of weight $-1$.
        \item For each $1 \le i \le r$, $(C_i, o)$ birationally dominates (and thus is) a simple germ of type $A_n$ (for some $n \ge 0$).
        \item For each $1 \le i \le r$, $(\hat C_i, o) := \bigcup_{j \ne i} (C_j, o)$ birationally dominates a simple germ of type $A_n$ (for some $n \ge 0$) or $D_n$ (for some $n \ge 4$).
    \end{enumerate}
    If these conditions hold, $(C, o)$ is of finite growth if and only if the group $\mFM_{1,-1}(C, o)$ of minimal spectral 1-cycles of weight $-1$ is of maximal rank.
\end{mainthm}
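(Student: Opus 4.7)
The plan is to mimic the structure of the proof of Theorem~\ref{thm:finite}, translating the listed conditions to and from the dominance criterion of Proposition~\ref{prop:tamedom}, which identifies tame CM germs as those birationally dominating some $T_{pq}$ germ but no $ADE$ germ. For the forward direction, I expect (a) and (b) to follow from analyzing the weight filtration induced by a $T_{pq}$ template that $(C,o)$ dominates: Theorem~\ref{thm:finite} already forces $\min w_0^C \le -2$ since $(C,o)$ dominates no simple germ, and the two ``long branches'' of a $T_{pq}$, being responsible for its non-simple character, should produce a minimal spectral 1-cycle of weight $-1$ that survives to $(C,o)$, pinning down $\min w_0^C = -2$ as well. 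Conditions (c) and (d) should follow from the principle that subgerms of a tame germ are of finite or tame CM type, applied to the irreducible $(C_i,o)$ and the complementary subcurves $(\hat C_i,o)$: since the $T_{pq}$ list contains no irreducible tame-type subgerm, each $(C_i,o)$ is of finite CM type, and a comparison with Theorem~\ref{thm:finite} together with the branch counts of the $ADE$ list forces $(C_i,o)$ to be an $A_n$; similarly $(\hat C_i,o)$ is finite or tame, with the tame alternative and the $E_n$ dominance class excluded by examining which configurations can embed into a $T_{pq}$ minus a branch.

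For the reverse direction, (a) and (b) combined with Theorem~\ref{thm:finite} already exclude finite CM type, so it suffices to rule out wild CM type by explicitly constructing a $T_{pq}$ birationally dominated by $(C,o)$. I would use condition (c) to supply the individual branch parametrizations of a $T_{pq}$ and condition (d) to control the intersection data needed to assemble them compatibly: the $A_n/D_n$ dominance of each $(\hat C_i,o)$ translates into bounds on how the remaining branches meet one another, compatible with a $T_{pq}$ pattern. Condition (b) then plays the decisive role of selecting $T_{pq}$-dominance among the possible non-finite configurations, since a weight $-1$ spectral 1-cycle encodes precisely the intersection pattern between two branches characteristic of a $T_{pq}$ core.

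For the finite-growth addendum, my expectation is that finite growth within the tame class corresponds to a rigidity condition on which $T_{pq}$ is dominated — essentially, the absence of continuous moduli in the dominance — and that this rigidity translates into the weight $-1$ spectral 1-cycles attaining their maximum possible rank in the sense of Definition~\ref{def:maxrk}. A deficit in the rank of $\mFM_{1,-1}(C,o)$ should correspond to families of $T_{pq}$-type configurations parametrized by the ``missing'' cycles, hence to infinite growth.

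The main obstacle I anticipate is the construction step in the reverse direction: decoding condition (b) into concrete geometric intersection data realizing a $T_{pq}$ template and simultaneously excluding wild configurations that satisfy (a)--(d) on the nose. A secondary difficulty is verifying that maximal rank of $\mFM_{1,-1}(C,o)$ genuinely separates finite from infinite growth, rather than merely being necessary — this likely requires relating the rank count to the moduli visible in the motivic Poincar\'e series, along the lines of the framework recalled from \cite{NFilt}.
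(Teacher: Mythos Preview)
Your plan diverges substantially from the paper's. The paper does \emph{not} work directly with the dominance criterion of Proposition~\ref{prop:tamedom}; the core of its proof is instead a two-step translation through the \emph{overring conditions} (O1a)--(O3) of Proposition~\ref{prop:tameoc}. First (Section~\ref{sec:tcmt}), each overring condition is rewritten as an inequality on values of $w_0^C$ at specific lattice points (Lemmas~\ref{lem:o1bwt}--\ref{lem:o3wt}, assembled in Proposition~\ref{prop:tametypeptwts}). Second (Section~\ref{sec:tcmt_spect}), these pointwise weight inequalities are matched with the homological conditions (a)--(d): Proposition~\ref{prop:tamewt} handles (a), Proposition~\ref{prop:tamecycle} shows (b) $\Leftrightarrow$ (O2a)$\wedge$(O3), and Lemma~\ref{lem:mult3wt0cyc} together with the finite-type results of Section~\ref{sec:fcmt_spect} handle (d) $\Leftrightarrow$ (O2b). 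Finite growth is treated separately by Propositions~\ref{prop:fgtomr} and~\ref{prop:mrtofg}; the latter does construct an explicit dominated parabolic germ, but only after tameness is already established via the overring route.

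Your forward direction for (a) and (c) is essentially fine and close in spirit to the paper's. But your argument for (b) has a directionality problem: by Remark~\ref{rem:functor}, a dominance map $(C,o) \to T_{pq}$ induces $E^1(C) \to E^1(T_{pq})$, so a spectral cycle of $T_{pq}$ does not ``survive to $(C,o)$'' by functoriality---at best you can run a weight-comparison argument via Lemma~\ref{lem:FEAT}(g), which is more delicate than you suggest. The real gap, however, is the reverse direction, and you correctly flag it as the main obstacle. Building a dominated $T_{pq}$ directly from (a)--(d) is precisely what the overring route is designed to avoid: the paper's explicit dominance constructions (Propositions~\ref{prop:domD} and~\ref{prop:mrtofg}) depend on extracting concrete ring elements from weight increments via Lemma~\ref{lem:eltfromwt}, and your sketch gives no mechanism for assembling such elements into a $T_{pq}$ template in general. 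The paper sidesteps this entirely by verifying (O1a)--(O3) and invoking Proposition~\ref{prop:tameoc}, whose proof in \cite{DG2} already contains the requisite construction.
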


\begin{proof}
    See Subsection \ref{subsec:proof}.
\end{proof}

Note that, by Theorem \ref{thm:finite}, Conditions (c) and (d) can be understood in terms of the lattice homology groups and spectral sequences of the subcurves $C_i$ and $\hat C_i$ respectively for $1 \le i \le r$; as such, tameness is determined by the homology groups $\bH_*(C, o)$, $\bH_*(C_i, o)$, and $\bH_*(\hat C_i, o)$ together with the knowledge of which of the groups $\mFM_{1,-1}(C, o)$ and $\mFM_{1,0}(\hat C_i, o)$ (when defined) are nonzero. (As in the finite case, we can also identify a tame germ by computing the weights of a few well-chosen lattice points---see Propositions \ref{prop:fintypeptwts} and \ref{prop:tametypeptwts}.)

\begin{rem} \label{rem:altcond}
    Some of the conditions of the preceding theorem can be refined slightly or restated. In particular, as long as Condition (a) holds, we have:
    \begin{itemize}
        \item Condition (b) is automatically satisfied if $(C, o)$ has multiplicity 4 and more than 2 branches. (See Proposition \ref{prop:tamecycle}.)
        \item Condition (d) is automatically satisfied unless $(C, o)$ has multiplicity 4. Moreover, in this case, the condition still holds immediately for all indices $1 \le i \le r$ such that $(C_i, o)$ is not smooth. (See Lemma \ref{lem:o2bwt}.)
        \item By similar reasoning, we can see that any subcurve obtained by removing two or more branches from $(C, o)$ dominates an $A_n$ germ (for some $n \ge 0$), so Conditions (c) and (d) can be replaced with:\vspace{1mm}
        \begin{enumerate}[align=left]
            \item[\quad(cd)] Any reduced proper subcurve of $(C, o)$ birationally dominates a simple germ of type $A_n$ (for some $n \ge 0$) or $D_n$ (for some $n \ge 4$).
        \end{enumerate}
    \end{itemize}
\end{rem}

Theorems \ref{thm:finite} and \ref{thm:tame} together give us a characterization of the CM types of curve germs in terms of lattice-homological information about them and their subcurves. As we will see in Section \ref{sec:series}, such information can also be reformulated purely in terms of the corresponding \defterm{motivic Poincar\'e series}, without explicit reference to the theory of lattice homology. Consequently, we find that these invariants can also be used to detect CM type as well; we give the explicit conditions in Theorem \ref{thm:poin} (see Subsection \ref{subsec:reform}).

The remainder of this paper is organized as follows. Sections \ref{sec:setup} through \ref{sec:lh} give the preliminary definitions and results on which our main arguments will be based. Section \ref{sec:setup} establishes our basic notations and develops the machinery of the weighted lattice upon which lattice homology of a given curve germ is based, while Section \ref{sec:relat} recounts some background information about the birational dominance relation and classifications of plane curve germs up to various forms of equivalence. Section \ref{sec:cm} details the relevant notions from the theory of Cohen-Macaulay modules, and Section \ref{sec:lh} contains the main definitions and results pertaining to lattice homology, including the level filtration and associated spectral sequence---in particular, Subsection \ref{subsec:minspect} develops the concept of minimal spectral cycles, which is original to the present work.

The remaining sections are largely concerned with building up the results necessary to prove our main theorems. Section \ref{sec:comput} gives computations of the weighted lattices associated to plane curve germs in the foundational $ADE$ and $T_{pq}$ families. Section \ref{sec:fcmt} establishes the characterization of finite-CM-type germs in terms of lattice homology, while Section \ref{sec:fcmt_spect} details the means of distinguishing between these based on which $ADE$ germs they dominate. Section \ref{sec:tcmt} gives a characterization of the tameness of germs in terms of their weighted lattices, which is then translated into the homological framework in Section \ref{sec:tcmt_spect}; Section \ref{sec:series} gives the reformulation of these results in terms of the motivic Poincar\'e series. Finally, Section \ref{sec:exceptional} gives an application of our techniques to the classification of plane curve germs---in particular, our results in this section allow us to distinguish between the various subclasses (i.e., $A$, $D$, $E$, parabolic, hyperbolic, and unimodal exceptional) of the simple and unimodal germs in terms of lattice-homological invariants.

\subsection*{Acknowledgments}

The authors wish to thank Agust\'in Romano-Vel\'azquez and Javier Fern\'andez de Bobadilla for bringing the classification of curve germs by Cohen-Macaulay type to their attention, as well as Yuriy Drozd and Gert-Martin Greuel for discussions of the finer technical points of the existing literature.

\section{Preliminaries. Invariants of reduced curve germs}
\label{sec:setup}

In this section, we introduce some notation and definitions which will be used throughout this paper and recall some preliminary properties of reduced complex-analytic curve germs.

\subsection{Normalization, valuations, and the Hilbert function (\cite{AgNeIV,AgNeV,KNS2})}
\label{subsec:hilbert}

\begin{defn}
    Let $(C, o)$ be a reduced complex-analytic curve germ, $(\cO = \cO_{C,o}, \mFm)$ its local ring at $o$, and $\{(C_i,o)\}_{i=1}^r$ its irreducible components. Also let $\nu: (\ic{C}, \ic{o}) \cong \bigsqcup_{i=1}^r (\ic{C_i}, \ic{o}_i) \to (C,o)$ be the normalization, where $(\ic{C},\ic{o})$ is a multigerm with $r$ smooth components $(\ic{C_i}, \ic{o}_i)$. The corresponding ring $\ic{\cO} \cong \prod_{i=1}^r \bC\{t_i\}$ is the integral closure of $\cO$ in its total fraction ring. Then each component $(C_i,o)$ provides a valuation on $\ic{\cO}$ and hence on the subring $\cO$ as well; explicitly, for each $1 \le i \le r$ and $f \in \ic{\cO}$, we denote by $\mFv_i(f)$ the order of vanishing ${\rm ord}_{t_i}(f)$ of the restriction of $f$ to $(\ic{C_i}, \ic{o}_i)$ at $\ic{o}_i$.
    
    We define the {\it lattice of $(C, o)$} to be $\bN^r$, so that the coordinates correspond to the branches of $(C, o)$; for each $1 \le i \le r$, we let $e^i := (0, \ldots, 0, 1, 0, \ldots, 0)$ denote the $i^\text{th}$ basis vector, and set $e := \sum_{i=1}^r e^i = (1, \ldots, 1)$. We also let $m_i$ be the multiplicity of $(C_i, o)$ at $o$ for each $i$ and define the \defterm{multiplicity vector} $m := (m_1, \ldots, m_r) = \sum_{i=1}^r m_ie^i \in \bN^r$; where necessary, we may denote this by $m(C, o)$. The lattice indexes descending \defterm{lattice filtrations $\ic{\cF}$ and $\cF$ of $\ic{\cO}$ and $\cO$} respectively by ideals, given by setting, for $\ell = (\ell_1, \ldots, \ell_r) \in \bN^r$, $$\ic{\cF}(\ell) := \{f \in \ic{\cO} \mid \mFv_i(f) \ge \ell_i \text{ for all } 1 \le i \le r\} = ({t_1}^{\ell_1}, \ldots, {t_r}^{\ell_r})\ic{\cO} \text{ and}$$ 
    $$\cF(\ell) := \ic{\cF}(\ell) \cap \cO = \{f \in \cO \mid \mFv_i(f) \ge \ell_i \text{ for all } 1 \le i \le r\}.$$
\end{defn}

\begin{prop} \label{prop:lattfiltfacts}
    The following properties of the lattice filtrations are immediate:
    \begin{itemize}
        \item For each $\ell, \ell' \in \bN^r$ such that $\ell \le \ell'$ (that is, $\ell_i\leq \ell_i'$ for all $1 \le i \le r$), $\ic{\cF}(\ell) \supseteq \ic{\cF}(\ell')$ and hence $\cF(\ell) \supseteq \cF(\ell')$ too. Moreover, $\ic{\cF}(\min\{\ell, \ell'\}) = \ic{\cF}(\ell) + \ic{\cF}(\ell')$ and $\ic{\cF}(\ell + \ell') = \ic{\cF}(\ell)\ic{\cF}(\ell')$
        for any  $\ell, \ell' \in \bN^r$.
        \item For each $\ell = (\ell_1, \ldots, \ell_r) \in \bN^r$, $\dimlen \ic{\cO}/\ic{\cF}(\ell) = \dimlen \prod_{i=1}^r \bC\{t_i\}/({t_i}^{\ell_i}) = \sum_{i=1}^r \ell_i = |\ell|$.
        \item $\ic{\cF}(km) = \mFm^k\ic{\cO}$ for any $k \ge 0$ and $\cF(m)=(\mFm\ic{\cO})\cap \cO = \mFm$.
    \end{itemize}
\end{prop}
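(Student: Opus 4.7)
The plan is to verify each assertion by working under the product decomposition $\ic{\cO} \cong \prod_{i=1}^r \bC\{t_i\}$, under which $\ic{\cF}(\ell)$ becomes the componentwise ideal $\prod_{i=1}^r (t_i^{\ell_i})\bC\{t_i\}$; each claim then reduces to a routine identity in, or across, these discrete valuation rings.

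For the first bullet, I would note that monotonicity is immediate from the definition via the valuations $\mFv_i$, and reduce the min and product identities to the componentwise DVR facts $(t^a) + (t^b) = (t^{\min(a,b)})$ and $(t^a)(t^b) = (t^{a+b})$, using that sums and products of ideals of product shape in $\prod_i \bC\{t_i\}$ are computed componentwise. Intersection with $\cO$ then transports the monotonicity to $\cF$. For the second bullet, I would simply observe that $\ic{\cO}/\ic{\cF}(\ell) \cong \prod_i \bC\{t_i\}/(t_i^{\ell_i})$ admits the $\bC$-basis $\{t_i^{\,j} : 0 \le j < \ell_i,\, 1 \le i \le r\}$ of cardinality $|\ell|$.

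For the third bullet, the key input is that a generic linear form $f \in \mFm$ on an embedding $(C, o) \hookrightarrow (\bC^N, 0)$ restricts to each branch with order exactly $m_i$, by a transversality argument: the condition of higher order on a given branch is a proper linear one on the space of linear forms, so avoiding all of them simultaneously is generic. For such an $f$, the principal ideal $(f)\ic{\cO}$ is exactly $\prod_i (t_i^{m_i})\bC\{t_i\} = \ic{\cF}(m)$, and the sandwich $(f)\ic{\cO} \subseteq \mFm\ic{\cO} \subseteq \ic{\cF}(m)$ forces $\mFm\ic{\cO} = \ic{\cF}(m)$. Iterating the product identity from the first bullet then gives $\ic{\cF}(km) = \ic{\cF}(m)^k = \mFm^k\ic{\cO}$. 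For $\cF(m) = \mFm$, the inclusion $\mFm \subseteq \cF(m)$ follows from the same order estimate applied to an arbitrary $f \in \mFm$, while the reverse $\cF(m) \subseteq \mFm$ holds because $\mFv_i(f) \ge m_i \ge 1$ for all $i$ forces $f(o) = 0$, and on the other hand $\cF(m) = \ic{\cF}(m) \cap \cO = (\mFm\ic{\cO}) \cap \cO$ by what we just showed.

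The only non-routine point I anticipate is the transversality assertion for the generic linear form; everything else is formal manipulation of the product decomposition, consistent with the paper's remark that these properties are ``immediate''.
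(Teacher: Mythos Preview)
Your proposal is correct and matches the paper's intent: the paper offers no proof beyond the word ``immediate,'' and your componentwise verification in the product ring $\prod_i \bC\{t_i\}$, together with the standard identification $m_i = \min\{\mFv_i(f) : f \in \mFm\}$ (your generic-linear-form argument), is exactly the routine check being left implicit. The only cosmetic point is that for $\mFm \subseteq \cF(m)$ you invoke ``the same order estimate'' for arbitrary $f \in \mFm$; strictly speaking this is the \emph{lower} bound $\mFv_i(f) \ge m_i$, which holds because $m_i$ is by definition the minimum valuation along branch $i$---but this is surely what you meant.
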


\begin{defn} \label{def:wt}
    The \defterm{Hilbert functions} $\mFh, \ic{\mFh}: \bN^r \to \bN$ associated with the filtrations and the \defterm{weight function} $w_0: \bN^r \to \bZ$ (the main ingredient of the lattice homology) are defined as follows for each lattice point $\ell \in \bN^r$:
    \begin{itemize}
        \item $\mFh(\ell) := \dimlen \frac{\cO}{\cF(\ell)} = \dimlen \frac{\ic{\cF}(\ell) + \cO}{\ic{\cF}(\ell)}$.
        \item $\ic{\mFh}(\ell) := \dimlen \frac{\ic{\cO}}{\ic{\cF}(\ell) + \cO} = |\ell|-\mFh(\ell)$.
        \item $w_0(\ell) := \mFh(\ell) - \ic{\mFh}(\ell) = 2\mFh(\ell) - |\ell| = |\ell| - 2\ic{\mFh}(\ell)$.
    \end{itemize}
    We may also indicate $(C, o)$ in the notation by writing $\mFh^C, \ic{\mFh}\!\!\!\phantom{\mFh}^C, w^{C}_0$ in place of $\mFh, \ic{\mFh}, w_0$.
\end{defn}

A straightforward computation shows that, for arbitrary $\ell, \ell' \in \bN^r$, we have 
\begin{equation} \label{eq:addwt}
    w_0(\ell + \ell') - w_0(\ell) = |\ell'| - 2\dimlen\ (\,\ic{\cF}(\ell) + \cO\,)/(\,\ic{\cF}(\ell+\ell') + \cO\,).
\end{equation}

\subsection{Connection with the semigroup of values (\cite{delaMata87,delaMata,Garcia})}
\label{subsec:semigp}

\begin{defn} \label{def:semigp}
    The \defterm{semigroup of values} of $(C, o)$ is the subsemigroup of $(\bN^r, +)$ given by $$\cS_C := \big\{\,\big(\mFv_1(g), \dots, \mFv_r(g)\big) \in \bN^r\ \big|\ g \in \cO \text{ is not a zero-divisor} \,\big\}.$$
\end{defn}

This semigroup, which we may also denote simply by $\cS$, is known to possess many nice properties---for example, if $s_1, s_2 \in \cS$, then $\min\{s_1, s_2\} \in \cS$ as well. See, e.g., \cite[p. 350]{delaMata87} or \cite[2.8]{KNS2} for details. The Hilbert function $\mFh$ determines the semigroup $\cS$ as follows:
\begin{equation} \label{eq:Sfromh}
    \cS=\{\,\ell \in \bN^r \mid \mFh(\ell+e^i)>\mFh(\ell) \text{ for all } 1 \le i \le r\,\}.
\end{equation}
Conversely, the semigroup determines the Hilbert function as well. For $\ell \in \bN^r$, set $$\overline{\Delta}_i(\ell) := \{\,s \in \cS \mid s_ i= \ell_i \text{ and } s_j \ge \ell_j \text{ for all } j \not= i\,\}.$$ Then, for all $1 \le i \le r$ and $\ell \in \bN^r$, one has
\begin{equation} \label{eq:hfromS}
    \mFh(0)=0 \ \ \text{ and } \ \ \mFh(\ell+e^i) - \mFh(\ell) = \begin{cases}
        \ 1 \ \text{ if } \ \overline{\Delta}_i(\ell) \not= \emptyset,  \\
        \ 0 \ \text{ if } \ \overline{\Delta}_i(\ell) = \emptyset.
    \end{cases}
\end{equation}
In particular, if $(C,o)$ is irreducible, then $\mFh(\ell) = \#\{s < \ell \mid s \in \mathcal{S}\}$ for each $\ell \in \bN^r = \bN$.

Since the Hilbert function determines the weight, we obtain a similar characterization of $w_0$ in terms of $\cS$:
\begin{equation} \label{eq:wtdiff}
    w_0(0) = 0 \ \ \text{ and } \ \ w_0(\ell+e^i) - w_0(\ell) = \begin{cases}
        \ +1 \ \text{ if } \ \overline{\Delta}_i(\ell) \not= \emptyset,  \\
        \ -1 \ \text{ if } \ \overline{\Delta}_i(\ell) = \emptyset.
    \end{cases}
\end{equation}
In particular, $-|\ell| \le w_0(\ell) \le |\ell|$ and $w_0(\ell)$ is equivalent modulo 2 to $|\ell|$ for any $\ell \in \bN^r$.

For future use, we note the following reinterpretation of (\ref{eq:hfromS}) and (\ref{eq:wtdiff}):

\begin{lem} \label{lem:eltfromwt}
    For each $\ell \in \bN^r$ and $1 \le i \le r$, $\ic{\cF}(\ell)(\ic{\cO}/\ic{\cF}(\ell + e^i))$ is a principal ideal of $\ic{\cO}/\ic{\cF}(\ell + e^i)$. If $w_0(\ell + e^i) = w_0(\ell) + 1$ and $g \in \ic{\cO}$ is such that $g(\ic{\cO}/\ic{\cF}(\ell + e^i)) = \ic{\cF}(\ell)(\ic{\cO}/\ic{\cF}(\ell + e^i))$, then there exists $f \in \cO$ such that $f$ is equivalent to $g$ modulo $\ic{\cF}(\ell + e^i)$.
\end{lem}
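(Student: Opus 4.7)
The plan is to reduce everything to the product decomposition $\ic{\cO} \cong \prod_{j=1}^r \bC\{t_j\}$ and then invoke (\ref{eq:wtdiff}) together with the definition of the semigroup of values.

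First I would establish the principal-ideal claim by direct computation. Under the isomorphism $\ic{\cO} \cong \prod_{j=1}^r \bC\{t_j\}$, the ideal $\ic{\cF}(\ell)$ decomposes as $\prod_j (t_j^{\ell_j})$, and $\ic{\cF}(\ell+e^i)$ differs from it only in the $i^{\text{th}}$ factor, where $(t_i^{\ell_i})$ is replaced by $(t_i^{\ell_i+1})$. The image of $\ic{\cF}(\ell)$ in $\ic{\cO}/\ic{\cF}(\ell+e^i)$ therefore equals the quotient $\ic{\cF}(\ell)/\ic{\cF}(\ell+e^i)$, which is the one-dimensional $\bC$-vector space spanned by the class of the element with $t_i^{\ell_i}$ in the $i^{\text{th}}$ coordinate and $0$ elsewhere. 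In particular it is principal, generated as a module over $\ic{\cO}/\ic{\cF}(\ell+e^i)$ by any nonzero element.

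For the second claim, I would extract from the weight hypothesis an element of $\cO$ with the correct valuations. By (\ref{eq:wtdiff}) the equality $w_0(\ell + e^i) = w_0(\ell) + 1$ is equivalent to $\overline{\Delta}_i(\ell) \neq \emptyset$, so there exists a non-zero-divisor $g' \in \cO$ with $\mFv_i(g') = \ell_i$ and $\mFv_j(g') \ge \ell_j$ for all $j \ne i$. This forces $g' \in \ic{\cF}(\ell)$ but $g' \notin \ic{\cF}(\ell + e^i)$, so the image $\bar{g}'$ in $\ic{\cF}(\ell)/\ic{\cF}(\ell+e^i)$ is nonzero and hence, by the one-dimensionality noted above, generates this quotient.

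Finally, I would compare $g$ and $g'$ modulo $\ic{\cF}(\ell + e^i)$. Both of their classes lie in the one-dimensional $\bC$-space $\ic{\cF}(\ell)/\ic{\cF}(\ell + e^i)$ and are nonzero, so there is a scalar $c \in \bC^*$ with $g \equiv c g' \pmod{\ic{\cF}(\ell + e^i)}$. Setting $f := c g' \in \cO$ yields the desired lift.

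There is no real obstacle; the mild bookkeeping point is just to recognize that $\ic{\cF}(\ell)(\ic{\cO}/\ic{\cF}(\ell+e^i))$ means the image of $\ic{\cF}(\ell)$ in the quotient, so that once the one-dimensionality is in hand, both assertions reduce to the observation that any two nonzero vectors in a $\bC$-line are proportional.
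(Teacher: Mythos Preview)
Your proof is correct and is precisely the argument the paper has in mind: the lemma is stated there without proof, introduced only as ``a reinterpretation of (\ref{eq:hfromS}) and (\ref{eq:wtdiff}),'' and you have unpacked exactly that reinterpretation. The product decomposition makes $\ic{\cF}(\ell)/\ic{\cF}(\ell+e^i)$ visibly one-dimensional over $\bC$, and (\ref{eq:wtdiff}) supplies the semigroup element needed to produce the lift $f=cg'\in\cO$; there is nothing to add.
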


We now note some consequences of our characterizations of $\mFh$ and $w_0$ by the semigroup. Since $\cS \setminus \{0\} \subseteq m + \bN^r$, (\ref{eq:wtdiff}) implies that, for all of the lattice points $0 < \ell \le m$,
\begin{equation} \label{eq:multw}
    w_0(\ell) = 2-|\ell|.
\end{equation}
Applying this identity for $\ell = m$ and using (\ref{eq:addwt}), we find that
\begin{equation} \label{eq:multw2}
    w_0(2m) = 2 - 2\dimlen\ (\mFm\ic{\cO} + \cO)/(\mFm^2\ic{\cO} + \cO).
\end{equation}
Finally, since the Hilbert function is defined by a valuative filtration, $\mFh$ and $w_0$ satisfy the \defterm{matroid rank inequality} for lattice points $\ell, \ell' \in \bN^r$:
\begin{equation} \label{eq:matroid}
    \begin{split}
        \mFh(\ell)+\mFh(\ell') &\ge \mFh\big(\min\{\ell,\ell'\}\big)+\mFh\big(\max\{\ell,\ell'\}\big),\\
        w_0(\ell)+w_0(\ell') &\ge w_0\big(\min\{\ell,\ell'\}\big)+w_0\big(\max\{\ell,\ell'\}\big).
    \end{split}
\end{equation}

\subsection{The conductor and the $\delta$-invariant}
\label{subsec:conduct}

\begin{defn}
    The \defterm{delta invariant} of $(C, o)$ is $\delta = \delta(C, o) := \dimlen \overline{\cO}/\cO$. Let $\mFc = \mFc(C, o) := (\cO : \ic{\cO})$ be the \defterm{conductor ideal} of the normalization---i.e., the largest ideal of $\cO$ which is also an ideal of $\ic{\cO}$. This ideal in $\ic{\cO}$ has the form $({t_1}^{c_1}, \ldots, {t_r}^{c_r})\ic{\cO}$; the corresponding lattice point $c = c(C, o) := (c_1, \ldots, c_r) \in \bN^r$ is called the \defterm{conductor} of the semigroup $\cS$.
\end{defn}

This $c$ is the smallest lattice point (with respect to the usual coordinate-wise partial order on $\bN^r$) with the property $c + \bN^r \subseteq \cS$. In particular, $\cF(c) = \ic{\cF}(c) = \mFc$ and so $\mFh(c) = \dimlen \cO/\mFc$; this is also equal to $|c|-\delta$ since $\dimlen \ic{\cO}/\mFc = |c|$. Note that $(C, o)$ is smooth if and only if any of the equivalent conditions $\mFc = \cO$, $\mFc = \ic{\cO}$, and $\cO = \ic{\cO}$ holds.

By \cite[Theorem 4.1.1]{KNS2}, the Gorenstein property is reflected in the symmetry of the weight function on the part of $\bN^r$ below the conductor:
\begin{equation}\label{eq:GOR}
    (C,o) \text{ is Gorenstein } \Leftrightarrow\ w_0(\ell)=w_0(c-\ell) \text{ for any } 0 \le \ell \le c.
\end{equation}

\subsection{Subcurves and their invariants (\cite[Section 3.4]{GorNem2015})}
\label{subsec:lattincl}

We can define, for any $J \subseteq \{1, \ldots, r\}$, the subcurve $(C_J, o) := \bigcup_{i \in J} (C_i, o)$. There is a natural inclusion $\iota_J$ of the lattice $\bN^{|J|}$ of $(C_J, o)$ into the lattice $\bN^r$ of $(C,o)$ (where, for $i \not\in J$, the $i$-entries of the elements of  $\bN^{|J|}$ regarded in $\bN^r$ are taken to be zero). Then $\mFh^C(\iota_J(\ell))=\mFh^{C_J}(\ell)$, and hence  $w_0^C(\iota_J(\ell))=w_0^{C_J}(\ell)$, for any $\ell\in \bN^{|J|}$.

\section{Preliminaries. Relationships between reduced curve germs}
\label{sec:relat}

Here we recall some facts about the ways in which reduced complex-analytic curve germs relate to one another.

\subsection{(Classical) classification of reduced plane curve germs}
\label{subsec:class}

When we study complex-analytic curve germs abstractly, we of course treat them up to isomorphism in the usual sense---or, equivalently, up to isomorphism of the corresponding local $\bC$-algebras. On the other hand, if we are interested specifically in those which are also \emph{hypersurface} germs---that is, in plane curve germs---then we also have the information of the embedding into $(\bC^2, 0)$ or, more stringently, of a defining function germ $f: (\bC^2, 0) \to (\bC, 0)$, and we can consider this up to various notions of equivalence. One notion is \defterm{contact}, or \defterm{\kdash equivalence}, which turns out to coincide with isomorphism as abstract (non-embedded) germs (see, e.g., \cite{Mond}); more narrowly, we can also use \defterm{right} or \defterm{\rdash equivalence}, which takes $(C, o) = (\{f=0\}, 0)$ to be equivalent to another plane curve germ $(\{g = 0\}, 0)$ if and only if there is a local analytic diffeomorphism $\phi:(\bC^2, 0)\to (\bC^2, 0)$ such that $f = g \circ \phi$ (see, e.g., \cite{AGV,Mond,Wall}).

Arnol'd and his school classified up to \rdash equivalence all simple, unimodal, and bimodal hypersurface germs (\cite{AGV}); although they worked more broadly, we restrict our discussions about hypersurface germs to reduced plane curve germs (that is, germs with corank $\le 2$). The non-smooth simple ones are the $ADE$ germs, namely $A_n$ ($n\geq 1$) with normal form $f(x,y)=x^{n+1}+y^2$; $D_n$ ($n\geq 4$): $f(x,y)=x^2y+y^{n-1}$; $E_6$: $f(x,y)=x^3+y^4$;  $E_7$: $f(x,y)=x^3+xy^3$; $E_8$: $f(x,y)=x^3+y^5$ (though sometimes  we will use other  local equations, more convenient 
in certain situations). For convenience, we will also denote the smooth germ by $A_0$ and regard it as part of the $A_n$ family (with local equation $f(x,y)=x+y^2$ corresponding to $n=0$).

The \rdash unimodal germs are divided into three groups: parabolic, hyperbolic, and exceptional. The parabolic and hyperbolic germs can be denoted uniformly: they are called $T_{p,q,2}$ singularities, with normal form $f(x,y) = x^p+y^q+\lambda x^2y^2$, where $p \le q$ are integers such that $1/p + 1/q \le 1/2$ and $\lambda$ is the modality parameter. There are two cases with $1/p + 1/q = 1/2$, namely $(p,q)=(4,4)$ and $(3,6)$; for the \rdash modality parameter, one has to impose $\lambda^2\not=4$ in the first case and $4\lambda^3+27\not=0$ in the second case. These are the parabolic germs. All other $T_{p,q,2}$ germs are hyperbolic, with $1/p + 1/q < 1/2$, and for them we have $\lambda\in\bC\setminus \{0\}$ (\cite{AGV}).

\rdash equivalence of germs implies \kdash equivalence; however, the two classifications do not agree. The family of \kdash simple germs coincides with the \rdash simple germs---however, the set of \kdash unimodal germs is larger than the \rdash unimodal set, and here it can also happen that different \rdash classes collapse into the same \kdash class. For example, for a given $p$ and $q$, all the local algebras of $T_{p,q,2}$ in the hyperbolic case are isomorphic independently of $\lambda$, so we can take $\lambda = 1$ as a representative of the \kdash equivalence class. In the parabolic cases, the unimodality parameter $\lambda$ survives (\cite{Wall}).

Following \cite{D1,DG2}, we will use the uniform notation $T_{pq}$ for the \kdash equivalence classes $T_{p,q,2}$; if necessary, we write $T_{pq}(\lambda)$ to indicate the unimodality parameter.

\subsection{Birational maps and dominance}
\label{subsec:birat}

The central role played by the normalization map is the most important example of the general observation that we can understand the structure of and relationships among reduced curve germs and multigerms in terms of birational maps between them. In particular, the following relation will be crucial in the sequel:

\begin{defn}
    Let $(C, o)$ and $(C', o')$ be reduced complex-analytic curve singularities with local algebras $\cO$ and $\cO'$ respectively, and let $\ic{\cO}$ be the ring corresponding to the normalization of $\cO$. We say that \defterm{$(C', o)$ birationally dominates $(C, o)$} if and only if there is a finite birational map $(C', o') \to (C, o)$ of complex-analytic space germs, or, equivalently, there exists a factorization $\cO \hookrightarrow \cO' \hookrightarrow \ic{\cO}$ of the natural embedding $\cO \hookrightarrow \ic{\cO}$.
\end{defn}

The following list brings together some immediate features:

\begin{lem} \label{lem:FEAT}
    Suppose that $(C', o')$ birationally dominates $(C, o)$.
    \begin{enumerate}[label=\normalfont(\alph*)]
        \item Any germ $(C'', o'')$ dominating $(C', o')$ also dominates $(C, o)$; that is, birational dominance is transitive.
        \item The number of local irreducible components of $(C,o)$ and $(C',o')$ agree.
        \item The normalizations of $(C,o)$ and $(C',o')$ 
        can be identified, and hence the corresponding lattices can be identified too.
        \item The multiplicity vectors (in the identified lattice $\bN^r$) satisfy $m(C,o) \ge m(C',o')$.
        \item $\mathcal{S}_{C,o} \subseteq \mathcal{S}_{C',o'} \subseteq \bN^r$.
        \item $\delta (C,o) \ge \delta(C',o')$.
        \item $\mFh^C(\ell) \le \mFh^{C'}(\ell)$, and hence $w^{C}_0(\ell) \le w_0^{C'}(\ell)$ too, for any $\ell\in\bN^r$.
    \end{enumerate}
\end{lem}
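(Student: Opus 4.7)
\emph{Plan.} The unifying observation is that once we have the factorization $\cO \hookrightarrow \cO' \hookrightarrow \ic{\cO}$, the integral closure of $\cO'$ in its total fraction ring must already be $\ic{\cO}$: indeed, $\ic{\cO}$ is normal and integral over $\cO'$, while any element integral over $\cO'$ is also integral over $\cO$. This single fact drives parts (a)--(c). For (a), if $(C'', o'')$ dominates $(C', o')$, we have $\cO' \hookrightarrow \cO'' \hookrightarrow \ic{\cO'} = \ic{\cO}$, which chained with the original factorization gives $\cO \hookrightarrow \cO'' \hookrightarrow \ic{\cO}$. For (b) and (c), the number $r$ of irreducible components is the number of factors of $\ic{\cO} \cong \prod_{i=1}^r \bC\{t_i\}$, which is now shared, and this same identification of $(\ic{C'}, \ic{o'})$ with $(\ic{C}, \ic{o})$ identifies the two lattices $\bN^r$ with common basis $\{e^i\}$.

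For part (e), any non-zero-divisor $g \in \cO$ remains a non-zero-divisor when regarded as an element of $\cO'$ (since it is still one in $\ic{\cO}$), and under the identifications above the valuations $\mFv_i(g)$ are computed from the same restrictions to the normalization and so do not change; hence $(\mFv_1(g), \ldots, \mFv_r(g)) \in \cS_{C', o'}$. For (d), I would restrict attention to the $i$-th branch by quotienting out the minimal primes corresponding (via (b) and (c)) to the $i$-th factor of $\ic{\cO}$, obtaining $\cO_{C_i, o} \hookrightarrow \cO_{C'_i, o'} \hookrightarrow \bC\{t_i\}$; the multiplicity $m_i$ of $(C_i, o)$ equals $\min\{\mFv_i(f) : f \in \mFm_{C_i} \setminus \{0\}\}$, and because the map is local we have $\mFm_{C_i} \subseteq \mFm_{C'_i}$ inside $\bC\{t_i\}$, so the minimum taken over the larger ideal is at most that taken over the smaller one, giving $m'_i \le m_i$.

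Parts (f) and (g) are straightforward dimension counts from the containment $\cO \subseteq \cO'$ inside $\ic{\cO}$. The surjection $\ic{\cO}/\cO \twoheadrightarrow \ic{\cO}/\cO'$ yields $\delta(C, o) \ge \delta(C', o')$. Likewise $\ic{\cF}(\ell) + \cO \subseteq \ic{\cF}(\ell) + \cO'$ as submodules of $\ic{\cO}$, so Definition \ref{def:wt} gives $\mFh^C(\ell) = \dimlen (\ic{\cF}(\ell) + \cO)/\ic{\cF}(\ell) \le \dimlen (\ic{\cF}(\ell) + \cO')/\ic{\cF}(\ell) = \mFh^{C'}(\ell)$, and the weight inequality follows at once from $w_0 = 2\mFh - |\ell|$. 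None of these arguments is substantively hard; the only place requiring any bookkeeping is (d), where one must check that the dominance passes coherently to corresponding branches, but this is exactly what (b) and (c) license.
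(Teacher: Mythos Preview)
Your arguments are correct and are precisely the natural verifications one would give; the paper itself does not supply a proof, instead introducing the lemma as ``a list [bringing] together some immediate features'' and leaving the details implicit. Your write-up simply makes those details explicit, and nothing in it deviates from the evident intended reasoning.
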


In general, for  a given  germ $(C, o)$, the classification  of all the germs 
$(C', o')$ (and the corresponding birational maps) which birationally dominate $(C, o)$ is not very easy.

\begin{examples} \label{ex:dom}
    We give some observations:
    \begin{enumerate}[label=(\arabic*)]
        \item Although part (e) of the preceding result is very useful for ruling out birational dominance, its converse does not hold; that is, it cannot be used to show birational dominance. Indeed, birational dominance is not a combinatorial property---for instance, if $T_{4, 4}(\lambda)$ birationally dominates $T_{4, 4}(\lambda')$, we can see that the dominance map must be an isomorphism as follows. A $T_{4, 4}$ singularity is the union of four distinct smooth branches through the origin in the plane, and so the normality of smooth germs implies that the restriction of our birational map to any component defines an isomorphism to the corresponding component of the codomain. Since the tangent spaces to these branches at the origin span those of the ambient planes, our map thus gives an isomorphism when extended to a map of these planes, and so it itself is also an isomorphism. In particular, by \cite{Wall}, this dominance implies that $\lambda = \lambda'$, even though all $T_{4, 4}$ singularities have the same semigroup.
        
        \item The fact that  $(C', o')$ dominates $(C, o)$ does not necessarily imply that their embedding dimensions satisfy  $\edim(C,o) \ge \edim(C',o')$. For example, any plane curve singularity with three irreducible components is dominated by the space curve $\{xy=yz=zx=0\}$, since the normalization factors through it.
        
        \item $A_{n}$ dominates $A_{n+2}$ by the `blow up' map $(x,y)\mapsto (x,xy)$ for all $n \ge 0$, $A_0$ dominates any irreducible curve germ by the normalization map, and $A_1=\{xy=0\}$ dominates any $(C, o)$ with $r=2$---e.g., it dominates any $D_{2k+1}=\{y(x^2-y^{2k-1})=0\}$ by $(x,y) \mapsto (y^{2k-1}, x+y^2)$. $D_n$ dominates $D_{n+2}$ by $(x,y)\mapsto (xy,y)$ for all $n \ge 4$.
        
        By Lemma \ref{lem:FEAT}{\it(d)}, $D_m$ cannot dominate $A_n$ for $m$ and $n$ odd, even though $r = 2$ in both cases. However, it is also true that $A_n$ cannot dominate $D_m$ for any $m$ whenever $n \geq 2$ by Lemma \ref{lem:FEAT}{\it (e)}: $\cS(D_m)\ni (2,1)\not\in \cS(A_n)$ (up to a reordering of branches). Similarly, $A_n$ for $n \ge 2$ or $D_5$ cannot dominate $E_7$.
        
        \item There is a complete classification in \cite{GK} (see also \cite{EG}) of those curve singularities  which birationally dominate an $ADE$ germ. Each is either of type $ADE$ or non-Gorenstein with local algebra $\cO$ of one of the following forms:  
        $\bC\{x,y,z\}/(x^2-y^n, xz, yz)$ for $n \ge 2$; 
        $\bC\{t^3,t^4,t^5\} \subset \mathbb{C}\{t\}$; 
        $\bC\{x,y,z\}/(x^3-y^4, xz-y^2, y^2z-x^2, yz^2-xy)$; 
        $\bC\{t^3,t^5,t^7\} \subset \mathbb{C}\{t\}$.
    \end{enumerate}
\end{examples}

\section{Preliminaries. Cohen-Macaulay properties}
\label{sec:cm}

Here we recall the basic definitions of the theory of maximal Cohen-Macaulay modules, as well as some foundational results which will be important for our purposes. For the sake of simplicity, we will concern ourselves mainly with the case of the local ring of a complex-analytic curve singularity, and avoid stating definitions and results in the utmost generality.

\subsection{Cohen-Macaulay modules and local rings}

\begin{defn}[\cite{BH,Yo}]
    Let $(\cO, \mFm)$ be a Noetherian local ring and $M$ a finitely-generated nonzero $\cO$-module. A sequence $x_1, \ldots, x_k \in \mFm$ is said to be \defterm{$M$-regular} if $x_i$ is a non-zero divisor on $M/(x_1, \ldots, x_{i-1})M$ for all $1 \le i \le k$; the length $k$ of the longest such sequence is called the \defterm{depth of $M$}.
    
    $M$ is called a \defterm{Cohen-Macaulay module}, or \defterm{CM module}, if its depth is equal to the dimension of its support; if, moreover, $\dim \supp M = \dim \cO$, $M$ is said to be a \defterm{maximal Cohen-Macaulay module}, or simply \defterm{MCM module}. $\cO$ is called a \defterm{Cohen-Macaulay local ring} if it is a CM module (and hence MCM module) over itself.
\end{defn}

It is known (see \cite[Proposition 1.5]{Yo}, noting the difference in terminology for MCM modules) that if $\cO$ is a reduced local ring of dimension one---as with the rings $\cO_{C,o}$ we consider---then an $\cO$-module $M$ is MCM if and only if it is \defterm{torsion-free} in the sense that the natural map $M \to M^{\vee\vee}$ to its double dual is injective.

Recall that a module is called \defterm{indecomposable} when 
it has no nontrivial direct summands. We have the following structure theorem for local analytic algebras and their MCM modules:

\begin{prop}[e.g., {\cite[1.3 \& 1.18]{Yo}}]
    The direct sum of MCM modules is MCM, and conversely any direct summand of an MCM module is MCM. If $(C, o)$ is a reduced complex-analytic curve germ, any finitely-generated module over $\cO_{C,o}$ admits an essentially unique decomposition into finitely many indecomposable direct summands.
\end{prop}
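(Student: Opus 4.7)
The plan is to handle the three assertions separately, using the torsion-free characterization of MCM modules recalled just before the statement for the first two, and the Krull-Schmidt-Azumaya machinery together with the Henselian property of analytic local rings for the third. For the direct sum claim, if $M_1$ and $M_2$ are MCM over $\cO := \cO_{C,o}$, then both are torsion-free, and the natural embeddings $M_i \hookrightarrow M_i^{\vee\vee}$ combine to give $M_1 \oplus M_2 \hookrightarrow M_1^{\vee\vee} \oplus M_2^{\vee\vee} \cong (M_1 \oplus M_2)^{\vee\vee}$, so the sum is torsion-free and hence MCM. For the summand claim, any nonzero direct summand $N$ of an MCM module $M$ is in particular a submodule of the torsion-free module $M$, hence torsion-free, hence MCM. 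Both steps are routine and should take only a few lines each.

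For existence of a decomposition into finitely many indecomposable summands, I would invoke Nakayama's lemma: any decomposition $M = \bigoplus_\alpha M_\alpha$ into nonzero pieces descends to a decomposition $M/\mFm M = \bigoplus_\alpha M_\alpha/\mFm M_\alpha$ of the finite-dimensional $\bC$-vector space $M/\mFm M$ into nonzero pieces, bounding the number of summands by $\dimlen M/\mFm M$. Iteratively splitting off direct summands as long as a proper decomposition exists must then terminate after finitely many steps, yielding a decomposition of $M$ into finitely many indecomposables.

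Uniqueness up to reordering and isomorphism is the content of the Krull-Schmidt-Azumaya theorem, which reduces to showing that $\mathrm{End}_\cO(N)$ is a (possibly noncommutative) local ring for each indecomposable finitely generated $N$. This is the main point of difficulty, and is where the analytic (rather than merely Noetherian local) hypothesis enters the argument: one needs $\cO$ to be Henselian so that idempotents can be lifted from $\mathrm{End}_\cO(N)/\mathrm{rad}$, giving the desired local structure via Azumaya's theorem. Since $\cO_{C,o}$ is the local ring of a complex-analytic germ and therefore Henselian (a standard consequence of the implicit function theorem for convergent power series, which makes $\mathrm{End}_\cO(N)$ a module-finite $\cO$-algebra to which Azumaya's result applies), the Krull-Schmidt conclusion follows. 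Everything else in the proposition is then a formal consequence of these two inputs.
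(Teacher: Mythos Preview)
Your proposal is correct and follows the standard route; note, however, that the paper does not give its own proof of this proposition but simply cites \cite[1.3 \& 1.18]{Yo} as a reference, so there is no paper-specific argument to compare against. One small caveat: the first sentence of the proposition is phrased for a general Noetherian local ring, whereas your torsion-free argument is particular to the reduced one-dimensional case; the general statement is proved directly from the depth characterization (depth of a direct sum equals the minimum of the depths, and depth does not drop under passage to a direct summand), but since the paper only uses the curve case this is not a substantive gap.
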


\subsection{Local rings of finite CM type (\cite{D1,DG1,DG2,DG4,GK,Kn,LW,Yo})}
\label{subsec:fcmt}

\begin{defn}
    Let $(\cO, \mFm)$ be a Noetherian local ring. We say that $\cO$ is of \defterm{finite Cohen-Macaulay type} (or \defterm{CM type}) if there are, up to isomorphism, only finitely many indecomposable MCM modules on $\cO$. Otherwise, we say that $\cO$ is of \defterm{infinite Cohen-Macaulay type}.
    
    If $(C, o)$ is a germ of a complex-analytic space, we define the \defterm{Cohen-Macaulay type of $(C,o)$} to be that of the corresponding local ring $\cO_{C,o}$.
\end{defn}

\begin{rem}
    Note that, here and throughout the rest of this note, we use the phrase ``Cohen-Macaulay type" in the sense of, e.g., \cite{DG2}---some authors instead reserve this term for the type of a ring as a module over itself in the sense of \cite[Definition 1.2.15]{BH}, but we do not adopt this convention.
\end{rem}

In the reduced curve case which is our primary focus, the following two propositions provide characterizations of the germs of finite CM type in terms of birational dominance and of the algebraic properties of the normalization map respectively (see, e.g., \cite[Ch. 9]{Yo}):

\begin{prop}[\cite{GK}] \label{prop:finitedom}
    Let $(C, o)$ be a reduced complex-analytic curve germ. Then $(C,o)$ is of finite Cohen-Macaulay type if and only if it birationally dominates a simple (i.e., $ADE$) curve germ.
\end{prop}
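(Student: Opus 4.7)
The plan is to prove the two directions separately; both are classical, but each draws on substantial background.

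For the sufficiency (``if'') direction, assume $\cO_0 := \cO_{C_0, o_0} \hookrightarrow \cO := \cO_{C, o}$ exhibits $(C,o)$ as birationally dominating a simple germ $(C_0, o_0)$. I would first invoke the classical fact that every $ADE$ germ has finite Cohen-Macaulay type---proved for example via the matrix factorization approach of Buchweitz-Greuel-Schreyer for hypersurfaces; see \cite{Yo}. Granting this, the task reduces to showing that finite CM type ``ascends'' from $\cO_0$ to $\cO$ under a finite birational extension. The restriction-of-scalars functor sends MCM $\cO$-modules to MCM $\cO_0$-modules (in dimension one over a reduced ring, MCM coincides with torsion-freeness over the common total quotient ring, which is clearly preserved). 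If $(M_\alpha)$ were an infinite family of pairwise non-isomorphic indecomposable MCM $\cO$-modules, each restriction $M_\alpha|_{\cO_0}$ would decompose into $\cO_0$-indecomposables drawn from a finite list; one must therefore bound the number of $\cO$-structures extending a fixed $\cO_0$-module $N$. Such structures form an affine variety sitting inside $\mathrm{Hom}_{\cO_0}\bigl(\cO, \mathrm{End}_{\cO_0}(N)\bigr)$, with $\cO$-isomorphism classes corresponding to $\mathrm{Aut}_{\cO_0}(N)$-orbits, and the desired finiteness then follows from a standard dimension argument characteristic of orders of finite representation type.

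For the necessity (``only if'') direction, I would invoke the explicit classification of one-dimensional reduced complex-analytic local rings of finite CM type, due to Greuel and Kn\"orrer and recorded in Example~\ref{ex:dom}(4). According to this list, any such ring is either an $ADE$ germ itself or belongs to one of four explicit non-Gorenstein families. A case-by-case inspection then exhibits the required dominance: for instance, $\bC\{t^3, t^4, t^5\}$ contains the $E_6$ ring $\bC\{t^3, t^4\}$, and $\bC\{t^3, t^5, t^7\}$ contains the $E_8$ ring $\bC\{t^3, t^5\}$; the two remaining families (involving $x, y, z$ subject to explicit relations, with normalization respectively a cusp union a smooth line, or of a similar shape) are handled analogously by exhibiting explicit $ADE$ subrings.

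The main obstacle is the necessity direction, which rests on the full Greuel-Kn\"orrer classification---a substantial theorem that would require a separate structural analysis, typically via the theory of overrings and the Gorenstein closure of the ring. The sufficiency direction is conceptually more direct, but its technical core---the finiteness of $\mathrm{Aut}_{\cO_0}(N)$-orbits on the variety of $\cO$-structures---still requires nontrivial input from the representation theory of orders, and one would either cite this ascent principle as a black box or develop it by hand for the small list of $ADE$ base rings.
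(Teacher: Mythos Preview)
The paper does not give its own proof of this proposition; it is stated with the citation \cite{GK} and used as a black box. So there is no ``paper's approach'' to compare against beyond the observation that the result is imported wholesale from Greuel--Kn\"orrer.

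Your sufficiency direction is sound in outline, though the ascent argument you sketch is more involved than necessary. The standard proof (see, e.g., \cite[Ch.~9]{Yo}) is cleaner: for $\cO_0 \subseteq \cO$ finite birational and $M$ an indecomposable MCM $\cO$-module, $M$ is an $\cO$-direct summand of $\mathrm{Hom}_{\cO_0}(\cO, M|_{\cO_0})$; decomposing $M|_{\cO_0}$ into indecomposables $N_i$ from the finite list and applying Krull--Schmidt shows $M$ is a summand of some $\mathrm{Hom}_{\cO_0}(\cO, N_i)$, of which there are finitely many. No orbit-counting on a variety of module structures is needed.

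Your necessity direction, however, is circular as written. You invoke Example~\ref{ex:dom}(4) as ``the explicit classification of one-dimensional reduced complex-analytic local rings of finite CM type,'' but the paper states that list as the classification of germs \emph{which birationally dominate an $ADE$ germ}---that is, the list is indexed by the right-hand side of the equivalence you are trying to prove, not the left. Identifying the two lists is precisely the content of Proposition~\ref{prop:finitedom}, so you cannot use one to establish the other without importing the full Greuel--Kn\"orrer theorem, which is what the citation already does. A non-circular route is available within the paper's framework: Proposition~\ref{prop:finiteoc} (the Drozd--Roiter/Green--Reiner/Jacobinski criterion) is cited independently from \cite{DR,GR,J}, and the arguments of Sections~\ref{sec:fcmt}--\ref{sec:fcmt_spect} (specifically Theorem~\ref{thm:fintypewtbd} and Propositions~\ref{prop:domA}, \ref{prop:domD}, \ref{prop:domE}) show directly that the numerical conditions of Proposition~\ref{prop:finiteoc} force dominance of an $ADE$ germ. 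If you want a self-contained proof of necessity, that is the path to follow.
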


(Recall that in Example  \ref{ex:dom}(3) we listed every curve singularity which dominates at least one $ADE$ germ. For example, the curve with local ring $\bC\{t^3,t^5,t^7\}$ dominates $E_8$, which has local ring $\bC\{t^3,t^5\}$.)

\begin{prop}[\cite{DR,GR,J}] \label{prop:finiteoc}
    Let $(C, o)$ be a reduced complex-analytic curve germ with local algebra $(\cO, \mFm)$, and let $\ic{\cO}$ be the integral closure of $\cO$ in its total fraction ring. Then $(C, o)$ is of finite Cohen-Macaulay type if and only if both of the following conditions hold:
    
    \begin{enumerate}[label=\normalfont(\alph*), align=left, itemsep=1mm]
        \item $\dimlen \ic{\cO}/\mFm\ic{\cO} \le 3$ (or, per Section \ref{sec:setup}, $|m| \le 3$).
        
        \item $\dimlen\,(\mFm\ic{\cO} + \cO)/(\mFm^2\ic{\cO} + \cO) \le 1$ (cf. (\ref{eq:multw2})).
    \end{enumerate}
\end{prop}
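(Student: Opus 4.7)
The plan is to prove both implications via the dominance characterization (Proposition \ref{prop:finitedom}), translating conditions (a) and (b) into weight-function statements at the lattice points $m$ and $2m$. By (\ref{eq:multw}), $w_0^C(m)=2-|m|$, so (a) is equivalent to $w_0^C(m)\ge -1$. By (\ref{eq:multw2}), (b) is equivalent to $w_0^C(2m)\ge 0$; since $w_0$ agrees with $|\cdot|$ modulo $2$ by (\ref{eq:wtdiff}) while $|2m|$ is even, this is in turn the same as $w_0^C(2m)\ge -1$. Thus (a) and (b) together reduce to the pair of weight bounds $w_0^C(m)\ge -1$ and $w_0^C(2m)\ge -1$.

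For the forward direction, assume $(C,o)$ has finite CM type. By Proposition \ref{prop:finitedom} it birationally dominates some $ADE$ germ $(S,o_S)$, which by Lemma \ref{lem:FEAT}(b) has the same number of branches $r$. An explicit calculation of $\mFh^S$ on each $ADE$ normal form (to be carried out in Section \ref{sec:comput}) yields $\min w_0^S\ge -1$; combining this with the pointwise inequality $w_0^C\ge w_0^S$ provided by Lemma \ref{lem:FEAT}(g) gives $w_0^C(m)\ge -1$ and $w_0^C(2m)\ge -1$, hence (a) and (b) by the translations above.

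For the backward direction, assume (a) and (b); the plan is to exhibit a factorization $\cO\hookrightarrow\cO'\hookrightarrow\ic{\cO}$ in which $\cO'$ is the local algebra of an $ADE$ germ, and then invoke Proposition \ref{prop:finitedom}. Since $|m|\le 3$, only a short list of pairs $(r,m)$ is possible: $(1,(1))$ forces $(C,o)=A_0$ and $(2,(1,1))$ forces $(C,o)=A_1$, while the remaining cases $(1,(2))$, $(1,(3))$, $(2,(2,1))$, and $(3,(1,1,1))$ correspond to candidate dominations of $A_n$ with $n\ge 2$, of the $E$-family, of $D_n$ with $n$ odd, and of $D_n$ with $n$ even respectively. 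In each case, I would use (b) to control how $\cS$ sits below $2m$ and apply Lemma \ref{lem:eltfromwt} to produce elements of $\cO$ with prescribed valuations, assembling them into generators for an overring $\cO'\subseteq\ic{\cO}$ isomorphic to the target $ADE$ local algebra.

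The main obstacle is this backward direction, particularly in the multiplicity-$3$ cases, where (b) must be leveraged very precisely to produce the two generators of $\mFm/\mFm^2$ required to recognize a $D_n$ or $E_k$ substructure, and one must then distinguish between the different $E$-classes using finer data about $\cS$ modulo the conductor. This is exactly where the classical arguments of Drozd, Ro\u{\i}ter, Greuel, Kn\"orrer, and Jacobinski concentrate their effort, and a lattice-theoretic version would amount to a careful semigroup analysis below $c$ under the two weight constraints.
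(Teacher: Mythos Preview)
The paper does not prove this proposition: it is quoted from the literature (Drozd--Ro\u{\i}ter, Green--Reiner, Jacobinski) as a preliminary result, on the same footing as Proposition~\ref{prop:finitedom}. The paper then \emph{uses} Proposition~\ref{prop:finiteoc} to establish Proposition~\ref{prop:fintypeptwts} and Theorem~\ref{thm:fintypewtbd}, not the other way around. So there is no ``paper's own proof'' to compare against; what you have written is an attempt to deduce one cited black box from another.

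On the merits of that attempt: your forward direction is correct and is essentially the paper's ``first proof'' of Theorem~\ref{thm:fintypewtbd} specialized to the two lattice points $m$ and $2m$. Your backward direction, however, is only a sketch, and you rightly identify it as the main obstacle. What you propose---using Lemma~\ref{lem:eltfromwt} to manufacture elements of $\cO$ with prescribed valuations and assemble an $ADE$ subring---is exactly the strategy the paper carries out later, in the $(\Leftarrow)$ half of Proposition~\ref{prop:domD} and (implicitly) Proposition~\ref{prop:domE}, following \cite[Ch.~9]{Yo}. But those arguments rely on more than the raw conditions (a) and (b): they use specific weight identities at points like $m+e^i+e^j$, obtained via the spectral-cycle hypothesis, to pin down the required generators. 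To make your route self-contained you would first need the semigroup argument of the paper's ``third proof'' of Theorem~\ref{thm:fintypewtbd} (showing (a)+(b) $\Rightarrow$ $\min w_0\ge -1$), and then the full construction of the dominated $ADE$ germ case by case. That is feasible, but it is a substantial amount of work that you have not done, and it would amount to reproving the classical Drozd--Ro\u{\i}ter/Jacobinski theorem by a circuitous path through Greuel--Kn\"orrer's Proposition~\ref{prop:finitedom}.
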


\subsection{Local rings of infinite CM type}
\label{subsec:icmt}

We can further distinguish between rings of infinite CM type using the behavior of \emph{families} of indecomposable MCM modules (in the sense of \cite[Section 3]{DG1}):

\begin{defn}[\cite{D1,DG1,DG2}] \label{def:tamewild}
    Let $\cO$ be the local ring of a reduced complex-analytic curve germ $(C, o)$. Then $\cO$ and, correspondingly, $(C, o)$ are said to be of
    \begin{itemize}
        \item \defterm{tame Cohen-Macaulay type} when the indecomposable MCM $\cO$-modules of any fixed rank, considered up to isomorphism, form a finite number of 1-parameter families, possibly together with a finite set of isolated modules.
        \item \defterm{wild Cohen-Macaulay type} when the indecomposable MCM $\cO$-modules of any fixed rank include $n$-parameter families of non-isomorphic modules for arbitrarily large $n$.
    \end{itemize}
\end{defn}

The key classification result for curve germs of infinite CM type is the following (although see Remark \ref{rem:cvgent} for a technical clarification):

\begin{thm}[{\bf Tame-wild dichotomy} \cite{DG1,D1}] \label{thm:dichot}
    A reduced complex-analytic curve germ of infinite CM type is either of tame CM type or of wild CM type, but not both.
\end{thm}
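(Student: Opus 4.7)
The plan is to follow the approach of Drozd and Drozd-Greuel, reducing the classification of indecomposable MCM modules to a combinatorial matrix problem and invoking the general tame-wild dichotomy for such problems.

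First I would translate the classification of MCM $\cO$-modules into a matrix problem over finite-dimensional $\bC$-algebras. Since $\cO$ is a reduced one-dimensional local ring, MCM modules coincide with torsion-free modules, and an MCM module of rank $n$ embeds as a full $\cO$-sublattice of $\ic{\cO}^n$; two such lattices give isomorphic modules precisely when they differ by the natural $GL_n(\ic{\cO})$-action. Passing modulo the conductor $\mFc$, the problem reduces to classifying $\cO/\mFc$-submodules of $(\ic{\cO}/\mFc)^n$ up to the induced group action. Since $\dimlen \ic{\cO}/\mFc < \infty$, this is precisely a bimodule (or ``matrix'') problem in the sense of Drozd, attached to the pair $(\cO/\mFc, \ic{\cO}/\mFc)$.

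Second, I would apply the general tame-wild dichotomy for matrix problems (or equivalently for bocses) over an algebraically closed field, which asserts that any such problem is either tame or wild and never both. The main obstacle, and the technical crux of the argument, lies in the reduction itself: one must verify that the notions of tameness and wildness are genuinely preserved under the translation, i.e., that an $n$-parameter family of pairwise non-isomorphic indecomposable MCM modules produces an $n$-parameter family of pairwise non-isomorphic indecomposable representations of the matrix problem, and conversely. This equivalence, together with the handling of modules of unbounded rank via filtration by sublattices of $\ic{\cO}^n$, is exactly the content of the Drozd-Greuel reduction and requires significant bookkeeping.

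Finally, the mutual exclusivity is a parameter-count argument. Suppose a germ were simultaneously of tame and wild CM type. Fix a rank $n$ for which wildness provides a $k$-parameter family of pairwise non-isomorphic indecomposable MCM modules of rank $n$ with $k \ge 2$. By tameness, all indecomposable MCM modules of that rank lie, up to isomorphism, in the union of finitely many 1-parameter families together with finitely many isolated modules. Over the uncountable algebraically closed field $\bC$, no $k$-parameter family with $k \ge 2$ of pairwise non-isomorphic objects can be absorbed into such a finite union, a contradiction. Thus the two conditions are incompatible, completing the dichotomy.
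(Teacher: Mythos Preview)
The paper does not prove this theorem; it is quoted from the literature with citations to \cite{DG1,D1}, and no argument is supplied beyond the attribution. In this sense there is no ``paper's own proof'' to compare your proposal against. Your sketch is a reasonable high-level summary of the Drozd and Drozd--Greuel strategy (reduction to a matrix/bocs problem via the conductor square, followed by invocation of the general tame--wild dichotomy for such problems), which is indeed the content of the cited references rather than of the present paper.

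One point worth noting: the paper does add a substantive caveat in Remark~\ref{rem:cvgent}, observing that the results of \cite{DG1,DG2} are stated for \emph{formal} power series rings, whereas the present paper works with convergent power series rings. The paper resolves this by appealing to a Henselian relaxation of the completeness hypotheses (citing a personal communication \cite{DGp}). Your proposal does not address this analytic-versus-formal issue; if you were to flesh out the argument in the setting of the paper, you would need to justify why the matrix-problem reduction and the dichotomy survive the passage from complete to Henselian local rings.
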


In the case of the germs which are of tame CM type, we can introduce one further distinction based on the numbers of the 1-parameter families under consideration:

\begin{defn}[\cite{DG2}] \label{def:fingrow}
    Let $\cO$ be the local ring of a reduced complex-analytic curve germ $(C, o)$ of tame CM type. For each rank $r \in \bN$, let $n_r \in \bN$ be the number of 1-parameter families of indecomposable MCM $\cO$-modules, as considered in Definition \ref{def:tamewild}. Then we say that $\cO$ and $(C, o)$ are of \defterm{finite growth} if there exists $N \in \bN$ such that $n_r \le N$ for all $r \in \bN$; otherwise, we say they are of \defterm{infinite growth}.
\end{defn}

\subsection{Conditions for tameness}

We have the following analogue for Proposition \ref{prop:finitedom}:

\begin{prop}[\cite{DG2}] \label{prop:tamedom}
    Let $(C, o)$ be a reduced complex-analytic curve germ of infinite CM type. Then $(C,o)$ is of tame CM type if and only if it birationally dominates at least one $T_{pq}$ singularity. Moreover, $(C, o)$ is tame of finite growth if and only if it birationally dominates at least one of the parabolic singularities $T_{4, 4}$ and $T_{3, 6}$.
\end{prop}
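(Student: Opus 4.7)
The plan is to derive this from the overring characterization of tameness (Proposition \ref{prop:tameoc}, referenced above) combined with a transfer principle for CM type along birational dominance. The proposition comprises four implications (two ``if'' and two ``only if'' directions, for the tame and the finite-growth statements respectively), and I would attack them via three steps.

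First, I would verify directly that each $T_{pq}$ germ is of tame CM type, with the parabolic cases $T_{4,4}$ and $T_{3,6}$ being of finite growth and the hyperbolic cases of infinite growth. Using the normal form $f(x,y) = x^p + y^q + \lambda x^2 y^2$, one computes the multiplicity vector, the conductor, and the overring quotient dimensions that appear in Proposition \ref{prop:tameoc}. The parabolic/hyperbolic dichotomy $1/p + 1/q = 1/2$ vs. $1/p+1/q < 1/2$ then corresponds precisely to whether the sharper overring bound characterizing finite growth is met, reflecting the boundary between elliptic and hyperbolic behavior of the associated Drozd matrix problem.

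Second, for the ``if'' directions, I would establish a transfer principle: if $(C,o)$ birationally dominates $(C',o')$, then tameness and finite growth propagate from $(C',o')$ to $(C,o)$. The key is that $\cO_{C',o'} \subseteq \cO_{C,o} \subseteq \ic{\cO}$ share a common normalization, so that MCM modules over both rings can be described as certain $\ic{\cO}$-sublattices stable under the respective subrings. Since the quotient $\cO_{C,o}/\cO_{C',o'}$ is a finite-dimensional $\bC$-vector space, the extra stability requirement in passing from $\cO_{C',o'}$-modules to $\cO_{C,o}$-modules is governed by only finitely many $\bC$-linear data, so the moduli of indecomposable MCM modules of fixed rank can grow by at most a discrete correction. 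Applying this with $(C',o') = T_{pq}$ then yields the two ``if'' directions at once, using the base cases established in Step 1.

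Third, for the ``only if'' directions, I would use the overring conditions directly to construct a $T_{pq}$ subring inside $\cO_{C,o}$. The tameness hypothesis constrains $|m|$ and certain overring quotient dimensions, and, combined with the structure of the semigroup of values from Subsection \ref{subsec:semigp}, allows one to exhibit two elements $x, y \in \mFm$ whose valuations on the branches of $(C,o)$ generate (together with a single monomial relation $\lambda x^2 y^2$) a subring isomorphic to some $\cO_{T_{pq}}$ with $1/p + 1/q \le 1/2$. The finite-growth refinement of the overring condition further forces $1/p + 1/q = 1/2$, picking out the parabolic cases $T_{4,4}$ and $T_{3,6}$.

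The main obstacle will be making the transfer principle of Step 2 rigorous, since Definitions \ref{def:tamewild} and \ref{def:fingrow} concern parameterized families of modules rather than isomorphism classes. The ``finitely many $\bC$-linear data'' correction must be realized as a correspondence at the level of moduli of families, and verifying that $n$-parameter families on $(C,o)$ pull back to $n$-parameter families on $(C',o')$ modulo finite discrete choices is where the matrix-problem machinery of Drozd and Greuel \cite{DG1,DG2} is essential; a self-contained derivation would either invoke this machinery directly or require a careful $\mathrm{Ext}^1$-analysis controlling the deformations of restrictions.
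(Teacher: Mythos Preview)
The paper does not give its own proof of this proposition; it is stated in the preliminaries as a result quoted from \cite{DG2}, with Remark~\ref{rem:cvgent} addressing the passage from formal to convergent power series rings. There is therefore nothing in the paper to compare your argument against.

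As for the proposal itself: the outline is broadly faithful to how \cite{DG2} proceeds, but note a structural dependency. You take Proposition~\ref{prop:tameoc} as input, yet that proposition is also quoted from \cite{DG2}, and in the original source the overring conditions and the dominance characterization are established together via the same matrix-problem analysis rather than one being a black-box consequence of the other. So Steps~1 and~3, while logically consistent given the paper's stated preliminaries, do not amount to an independent proof. Your Step~3 in particular---producing generators $x,y$ of a $T_{pq}$ subring from the overring data---is close in spirit to what the present paper does in the proofs of Propositions~\ref{prop:domD} and~\ref{prop:mrtofg}, but those arguments already take considerable care in individual cases; carrying this out uniformly is essentially the content of \cite{DG2}. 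For Step~2, the transfer direction is correct (dominance brings $\cO$ closer to $\ic{\cO}$, so its representation theory can only simplify), but your description of MCM modules as ``$\ic{\cO}$-sublattices stable under the respective subrings'' is not accurate---they are $\cO$-submodules of free $\ic{\cO}$-modules, not $\ic{\cO}$-stable---and the passage from this to a statement about families of indecomposables genuinely requires the Drozd--Greuel matrix-problem machinery you cite at the end.
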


Thus, the verification of the fact that a germ $(C, o)$ is tame can be done in two steps; first one has to test that it
is indeed of infinite type---that is, it does not dominate an $ADE$ germ (or, equivalently, at least one of the conditions from Proposition \ref{prop:finiteoc} fails)---then to prove that it dominates a $T_{pq}$ germ. For any concrete germ, this second step can also be carried out algebraically using the following analogue of Proposition \ref{prop:finiteoc}:

\begin{prop}[\cite{DG2}] \label{prop:tameoc}
    Let $(C, o)$ be a reduced complex-analytic curve germ of infinite CM type, with $(\cO, \mFm)$, $\ic{\cO}$, and $r$ as in Section \ref{sec:setup}.
    Set $t := (t_1, \ldots, t_r)$ in $\ic{\cO} \cong \prod_{i=1}^r \bC\{t_i\}$, and, 
    for each $1 \le i \le r$, denote the corresponding idempotent $(0, \ldots, 0, 1, 0, \ldots, 0)$ by $\epsilon_i$. Also let $\theta \in \mFm$ be such that $\theta\ic{\cO} = \mFm\ic{\cO}$. We define intermediate rings $\cO \subseteq \cO'' \subseteq \cO' \subseteq \cO_i' \subseteq \ic{\cO}$ (for all $1 \le i \le r$) by $\cO' := t\ic{\cO} + \cO$, $\cO'' := \theta t\ic{\cO} + \cO$, and $\cO_i' := \cO' + \bC\epsilon_i$. For each ring $\cO \subseteq \widetilde{\cO} \subseteq \ic{\cO}$, we note by finiteness that we can write $\widetilde{\cO}/\mFm\widetilde{\cO}$ as a product of finitely many Artinian local algebras; let $\lambda(\widetilde{\cO})$ be the vector of their lengths, under the convention that the entries are non-decreasing. (Thus $\lambda(\ic{\cO})$ is, up to a reordering, the multiplicity vector $m = m(C,o)$.)
    
    Then $(C,o)$ is of tame CM type if and only if all of the following conditions hold:
    \begin{enumerate}[label=\normalfont(\overringprint{\value*}), align=left, itemsep=.5mm]
        \item $|\lambda(\ic{\cO})| = |m| = \dimlen \ic{\cO}/\mFm\ic{\cO} \le 4$.
        
        \item $\lambda(\ic{\cO}) \not\in \{(4), (1, 3), (3)\}$.
        
        \item $|\lambda(\cO')| = \dimlen \cO'/\mFm\cO' \le 3$.
        
        \item For all $1 \le i \le r$, $\lambda(\cO_i') \ne (1, 3)$.
        
        \item If $|\lambda(\ic{\cO})| = |m| = \dimlen \ic{\cO}/\mFm\ic{\cO} = 3$, then $|\lambda(\cO'')| = \dimlen \cO''/\mFm\cO'' \le 2$.
    \end{enumerate}
\end{prop}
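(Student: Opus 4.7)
The plan is to prove this by passing through the dominance characterization of tameness provided by Proposition~\ref{prop:tamedom}: under the standing assumption that $(C, o)$ is of infinite CM type, tameness is equivalent to $(C, o)$ birationally dominating at least one $T_{pq}$ germ. My task thus reduces to showing that Conditions (O1a)--(O3) together are equivalent to the existence of such a dominated $T_{pq}$, with the infinite-CM-type hypothesis already ruling out the finite case treated in Proposition~\ref{prop:finiteoc}.

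First, I would interpret the invariants. Writing $\widetilde{\cO}/\mFm\widetilde{\cO}$ as a product of local Artinian $\bC$-algebras and collecting the vector of their lengths records the decomposition of the corresponding germ at the infinitesimal level; in particular $\lambda(\ic{\cO})$ coincides, up to reordering, with the multiplicity vector $m(C, o)$. Accordingly, Condition (O1a) bounds the total multiplicity by $4$, while (O1b) excludes certain unibranch and two-branch multiplicity profiles. The rings $\cO'$, $\cO''$, and $\cO_i'$ are controlled birational enlargements of $\cO$ within $\ic{\cO}$, and their mod-$\mFm$ length vectors measure how much ``room'' is available above $\cO$ in the integral closure.

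For necessity, I would assume $(C,o)$ is of tame CM type and apply Proposition~\ref{prop:tamedom} to get a chain $\cO_{T_{pq}} \subseteq \cO \subseteq \ic{\cO}$ for some $T_{pq}$. Direct computation of $\lambda(\ic{\cO})$, $\lambda(\cO')$, $\lambda(\cO_i')$, and $\lambda(\cO'')$ for each family of $T_{pq}$ germ---using that $T_{44}$ decomposes into four smooth branches while $T_{3q}$ has three branches---would confirm (O1a)--(O3) for $T_{pq}$ itself. I would then show the conditions propagate upward to $\cO$: each overring construction is compatible with the inclusion $\cO_{T_{pq}} \subseteq \cO$, so the corresponding length invariants can only remain the same or improve in the favourable direction as one enlarges the base ring, since $\cO'(C,o)$ contains $\cO'(T_{pq})$ in the same ambient $\ic{\cO}$.

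The sufficiency direction, which I expect to be the main obstacle, requires constructing a $T_{pq}$ subring of $\cO$ from (O1a)--(O3) alone. My approach would be a case analysis on $|m| \in \{3, 4\}$. When $|m| = 4$, Conditions (O1b), (O2a), and (O2b) restrict the branch structure enough to let one select local generators assembling into a defining equation of a $T_{4q}$ germ inside $\cO$; when $|m| = 3$, the extra condition (O3) supplies the control on $\cO''$ needed to build a $T_{3q}$ subring. The technical heart of the argument will be showing that the excluded profiles in (O1b) and the length bounds in (O2a), (O2b), and (O3) are precisely sharp: each forbidden configuration must be matched with a concrete obstruction---essentially a family of non-isomorphic MCM modules of unbounded parameter count---which then propagates to prevent tameness of any dominating germ. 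Assembling these obstructions rigorously is the delicate point, and I expect it to require the matrix problem techniques pioneered by Drozd and Greuel rather than purely ring-theoretic manipulations.
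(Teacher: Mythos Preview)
The paper does not prove this proposition at all: it is quoted as a known result from \cite{DG2} (Drozd--Greuel), exactly like Proposition~\ref{prop:tamedom} just above it. Both are imported as black boxes from the literature and then used as starting points for the paper's own reformulations in Sections~\ref{sec:tcmt} and~\ref{sec:tcmt_spect}. So there is no ``paper's own proof'' to compare your proposal against.

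That said, a few remarks on your outline. Your plan to derive the overring conditions from the dominance criterion of Proposition~\ref{prop:tamedom} is, in the original source, essentially the wrong way round: in \cite{DG2} the overring conditions are established first via matrix problem techniques (reducing the classification of MCM modules to a combinatorial problem whose tame/wild dichotomy can be read off from the length data), and the $T_{pq}$-dominance statement is then deduced from them. Using Proposition~\ref{prop:tamedom} as input to prove Proposition~\ref{prop:tameoc} therefore risks circularity unless you have an independent proof of the former. Your necessity argument also has a gap: the claim that the length invariants $\lambda(\cO')$, $\lambda(\cO_i')$, $\lambda(\cO'')$ ``can only improve'' under the passage from $\cO_{T_{pq}}$ to $\cO$ is not obvious, because the maximal ideals $\mFm_{T_{pq}}$ and $\mFm$ differ and the quotients $\widetilde{\cO}/\mFm\widetilde{\cO}$ are taken with respect to different module structures. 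Finally, you yourself concede that the sufficiency direction ultimately requires the Drozd--Greuel matrix techniques, which is correct---but that is the entire content of the result, not a technical detail to be filled in later.
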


Note that here $\cO_i'$ is distinct from $(\cO_{C_i,o})'$ (unless $r = 1$). Also observe that, although the isomorphism class of $\cO'$ \emph{as a $\bC$-algebra} depends only on $r$, $\lambda(\cO')$ is defined with respect to the $\cO$-algebra structure induced by the embedding $\cO \subseteq \cO'$, and so Condition (O2a) depends meaningfully on $(C, o)$.

\begin{rem}
    In \cite{DG2}, Condition (O2b) of the preceding result is stated in a slightly different way; the indices $i$ are restricted to those satisfying the admissibility condition $\mFm\epsilon_i \subseteq \mFm + \theta t\ic{\cO}$. However, as noted in \cite[Remark 6.1]{DG2}, it is equivalent to require that the stated non-equality hold for all indices, and for our purposes this formulation will be more convenient. Ultimately, as discussed in Remark \ref{rem:altcond}, we will obtain an analogous but distinct optional restriction on the indices we consider in Condition (d) of Theorem \ref{thm:tame}.
\end{rem}

\begin{rem} \label{rem:cvgent}
    Strictly speaking, the results of \cite{DG1} and \cite{DG2} which we have discussed here---namely, Theorem \ref{thm:dichot}, Proposition \ref{prop:tamedom}, and Proposition \ref{prop:tameoc}---were originally stated and proved for \emph{formal} power series rings, rather than the convergent power series rings we consider. However, it is possible to relax the completeness hypotheses used in these sources and apply the same arguments to rings which are instead merely Henselian, so these results still hold in our setting (\cite{DGp}).
\end{rem}

\section{Preliminaries. Lattice homology of reduced curve germs}
\label{sec:lh}

In this section, we review the basic definitions pertaining to the \defterm{analytic lattice homology of curve germs}---for more details, see \cite{AgNeIV,AgNeV,KNS1,KNS2,NFilt,KS25}. Also see \cite{AgNeHigh,Nlattice,NBook} for other lattice (co)homology theories in singularity theory and, e.g., \cite{GorNem2015,NFilt,Zemke} for Floer-theoretic interpretations in low-dimensional topology. Our discussion concludes with Subsection \ref{subsec:minspect}, which introduces new machinery for working with certain features of the spectral sequence introduced in \cite{NFilt}.

\subsection{Definition of the lattice homology of reduced curve germs (\cite{AgNeIV,AgNeV,KNS2})}
\label{subsec:latticedef}

We fix a reduced complex-analytic curve germ $(C, o)$ and adopt the conventions of Section \ref{sec:setup}. In particular, the reader is invited to recall the weight function $w_0:\bN^r \to \bZ$ introduced in Definition \ref{def:wt}; we now develop a homology theory based on the corresponding filtration of an associated cubical complex.

To wit, the positive orthant $\bR_{\ge 0}^r$ has a natural decomposition into cubes such that the set of zero-dimensional cubes consists of the lattice points $\bN^r$; any $\ell \in \bN^r$ and subset $I \subseteq \{1, \ldots, r\}$ of cardinality $q$ then define a closed $q$-dimensional cube $(\ell, I)$ of the decomposition, the convex hull in $\bR^r$ of the vertices $\ell + \sum_{i \in J} e^i$ as $J$ runs over all subsets of $I$. We may also refer to such a closed cube $(\ell, I)$ as $\square = \square_{\ell,I}$, and we call $\bR_{\geq 0}^r$ together with this decomposition the \defterm{cubical complex of $(C, o)$}.

\begin{defn} \label{def:wt_q}
    We define the \defterm{weight} of a $q$-cube $\square$ for any $q \ge 0$ by $$w_q(\square) = \max\{\,w_0(\ell) \mid \ell \in \square \cap \bN^r \,\}$$ and the \defterm{weight-at-most-n space} $S_n$ of the weight function by $$S_n = \bigcup\big\{\, \square \text{ a } q\text{-cube of } \bR_{\ge 0}^r \mid q \in \bN, w_q(\square) \le n \,\big\}.$$ That is, $S_n$ is the full subcomplex of $\bR_{\ge 0}^r$ spanned by the vertices of weight at most $n$.
\end{defn}

Since the set $\{\ell \in \bN^r \mid w_0(\ell) \le n\}$ is finite for every $n \in \bZ$ (e.g., by (\ref{eq:wtdiff}) and the existence of the conductor $c$), each $S_n$ is a finite closed cubical complex and $S_n = \emptyset$ for $n \ll 0$. Moreover, $S_n \subseteq S_{n+1}$ for each $n \in \bZ$, and indeed the $S_n$ together comprise an increasing $\bZ$-indexed filtration of $\bR_{\ge 0}^r$.

\begin{defn} \label{def:lh}
    For $k \in \bN$, the $k$th \defterm{lattice homology} of the germ $(C, o)$ is defined as $$\bH_k = \bH_k(C,o) := \bigoplus_{n \in \bZ} H_k(S_n, \bZ);$$ we endow this with the structure of a $2\bZ$-graded $\bZ[U]$-module, where $U$ is taken to have degree $-2$, by letting $(\bH_k)_{-2n} = \bH_{k,-2n} := H_k(S_n, \bZ)$ for each $n \in \bZ$ and using the action $U: H_k(S_n, \bZ) \to H_k(S_{n+1},\bZ)$ induced by the natural inclusions. (Although taking $-2n$ instead of $n$ looks unnatural, this choice is motivated by the compatibility of the lattice homology theory with the Heegaard Floer theory and similarities with several other theories of low-dimensional topology---see, e.g., \cite{GorNem2015,NFilt,Zemke}.)
    
    We may also consider the module $\bH_* := \bigoplus_{k \in \bN} \bH_k$ as a whole, which then has the structure of a $(\bN \times 2\bZ)$-graded $\bZ[U]$-module with $U$ of bidegree $(0, -2)$; the first grading is the usual \defterm{homological grading}, and the second, which we call the \defterm{doubleweight grading}, is induced by the one above.
\end{defn}

From the constructions, we have 
\begin{equation} \label{eq:minweights}
    \min w_0 = \min_{\ell \in \bN^r} w_0(\ell) = \min\{n \in \bZ \mid \bH_{0,-2n} \ne 0 \} = \min\{n \in \bZ \mid \bH_{*,-2n} \ne 0\}.
\end{equation}

Using the definition of the Euler characteristic $\eu(\bH_*(C, o))$ formulated in \cite{AgNeIV,AgNeV,KNS2}, we find
\begin{equation} \label{eq:eulerdelta}
    \delta(C, o) = \eu(\bH_*(C, o));
\end{equation}
that is, the lattice homology categorifies the delta invariant.

The following result allows us to compute $\bH_*(C, o)$ using only a \emph{finite} cubical complex:

\begin{thm}[{\cite[Lemma 4.2.1]{AgNeIV}, \cite[Proposition 1.2.17]{AgNeV}, \cite[Theorem 2.3.2]{KNS2}}] \label{thm:EUcurves}
    For any $\ell \in \bN^r$, define the rectangle $R(0, \ell) := \{x \in \bR^r \mid 0 \le x \le \ell\}$. Then, for any lattice point $\ell \geq c$, the inclusion $S_n \cap R(0, \ell) \hookrightarrow S_n$ is a homotopy equivalence for each $n \in \bZ$. In particular, $S_n$ is empty for $n \ll 0$, as noted above, and contractible for $n \ge \max w_0|_{R(0, c)}$. 
    
    Therefore, for any lattice point $\ell \ge c$, there exists a canonical bigraded $\bZ[U]$-module isomorphism $\bigoplus_{n \in \bZ} H_*\big(S_n \cap R(0, \ell), \bZ\big) \to \bH_*(C,o)$.
\end{thm}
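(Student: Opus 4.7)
The plan is to construct a cellular retraction $\pi\colon \bR^r_{\ge 0} \to R(0, \ell)$ respecting the weight filtration, together with a deformation of the identity to $\pi$ that stays inside each $S_n$. Everything hinges on a monotonicity property of $w_0$ past the conductor, which I would prove first: for any lattice point $\ell^* \ge c$ and any $1 \le i \le r$, $w_0(\ell^* + e^i) = w_0(\ell^*) + 1$. Indeed, since $\ell^* \in c + \bN^r \subseteq \cS$, the set $\overline{\Delta}_i(\ell^*)$ contains $\ell^*$, so formula (\ref{eq:wtdiff}) forces the plus sign. Iterating this, one deduces that if $\ell^{(1)} \le \ell^{(2)}$ are lattice points satisfying $\ell^{(1)}_i \ge c_i$ in every coordinate where $\ell^{(2)}_i > \ell^{(1)}_i$, then $w_0(\ell^{(2)}) = w_0(\ell^{(1)}) + |\ell^{(2)} - \ell^{(1)}| \ge w_0(\ell^{(1)})$.

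Next I would define $\pi(x)_i := \min(x_i, \ell_i)$ and verify that it is cellular: each closed cube $\square = (\ell', I)$ maps to the closed cube $(\min(\ell', \ell), I_{\text{keep}})$, where $I_{\text{keep}} := \{i \in I \mid \ell'_i < \ell_i\}$. To check that $\pi(S_n) \subseteq S_n$, one observes that every vertex $v'$ of $\pi(\square)$ admits a lift to a vertex $v$ of $\square$ with $v \ge v'$ and with $v - v'$ supported on indices $i$ where $v'_i = \ell_i \ge c_i$; the monotonicity lemma then gives $w_0(v') \le w_0(v) \le w_q(\square) \le n$. The main obstacle is promoting this cellular retraction to a genuine homotopy equivalence $S_n \simeq S_n \cap R(0, \ell)$. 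The cleanest approach is to check that the straight-line homotopy $H_t(x) := (1-t)x + t\pi(x)$ stays inside $S_n$: within a single cube $\square$, the intermediate points traverse finitely many adjacent closed cubes whose vertices all lift to vertices of $\square$ in the sense above, so the monotonicity lemma bounds their weights by $n$. Alternatively, one can perform inductive elementary cubical collapses of the cubes of $S_n$ lying outside $R(0, \ell)$, pairing each such cube with a free face pointing in the first coordinate where it exceeds $\ell$; again the monotonicity lemma guarantees that the paired face is genuinely free within $S_n$.

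Finally, the emptiness and contractibility assertions are immediate consequences: $w_0$ attains its global minimum on the finite set $R(0, c) \cap \bN^r$, since monotonicity past $c$ rules out smaller values anywhere else, so $S_n = \emptyset$ for $n$ below this minimum; and when $n \ge \max w_0|_{R(0, c)}$, we have $S_n \cap R(0, c) = R(0, c)$, which is a product of intervals and thus contractible. The bigraded $\bZ[U]$-module isomorphism then follows by summing the homology isomorphisms $H_*(S_n \cap R(0, \ell), \bZ) \to H_*(S_n, \bZ)$ over $n$, with $U$-equivariance guaranteed by the naturality of the inclusions $S_n \hookrightarrow S_{n+1}$ together with their restrictions to $R(0, \ell)$.
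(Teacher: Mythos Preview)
The paper does not supply its own proof of this theorem; it is quoted from the cited sources. Your approach---the coordinate-wise ``nearest point'' retraction $\pi(x)_i = \min(x_i, \ell_i)$ together with a monotonicity property of $w_0$ beyond the conductor---is the standard one and is essentially correct.

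There is, however, one genuine slip in your monotonicity step. You prove the one-step lemma under the hypothesis $\ell^* \ge c$ (in \emph{all} coordinates), but your iterated version assumes only $\ell^{(1)}_i \ge c_i$ in the coordinates that actually change; the intermediate points along such an iteration need not satisfy $\ell^* \ge c$ globally, so the deduction does not follow as written. The fix is immediate: strengthen the one-step lemma to require only $\ell^*_i \ge c_i$ in the single coordinate $i$ being incremented. Indeed, setting $s_j := \max(\ell^*_j, c_j)$ for $j \ne i$ and $s_i := \ell^*_i$ gives $s \ge c$, hence $s \in c + \bN^r \subseteq \cS$, and clearly $s \in \overline{\Delta}_i(\ell^*)$; then (\ref{eq:wtdiff}) yields $w_0(\ell^* + e^i) = w_0(\ell^*) + 1$. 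With this correction in hand, both your straight-line homotopy argument (the intermediate cubes traversed by $H_t$ have all their vertices dominated, in the relevant coordinates, by vertices of the original cube, and those coordinates sit at or above $\ell_i \ge c_i$) and your alternative elementary-collapse argument go through, and the remaining consequences follow exactly as you indicate.
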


\subsection{The level filtration on lattice homology}

The lattice homology of our curve germ $(C, o)$ arises from the filtration of $\bR_{\ge 0}^r$ by the spaces $S_n$; given a second filtration of the same cubical complex, we can consider the induced filtrations on the $S_n$ to obtain a corresponding spectral sequence supported by the lattice homology, from which we produce finer numerical invariants of $(C, o)$. Here we recall the basics of this approach---for the full details, see \cite{NFilt}.

\begin{defn} \label{def:levelfilt}
    \begin{enumerate}[label=(\alph*)]
        \item The \defterm{level filtration on the cubical complex $\bR_{\ge 0}^r$ of $(C, o)$} is the $\bZ$-indexed increasing filtration $\cdots \subseteq \mFX_{-2} \subseteq \mFX_{-1} \subseteq \mFX_0 \subseteq \cdots$ given by letting $\mFX_{d}$ be the full subcomplex of $\bR_{\ge 0}^r$ spanned by the vertices $\ell \in \bN^r$ such that $|\ell| \ge -d$ for each $d \in \bZ$. In particular, this means that $\mFX_{d} = \bR_{\ge 0}^r$ for all $d \ge 0$; hence we typically restrict ourselves to the subspaces  $\{\mFX_{-d}\}_{d \ge 0}$.
        
        \item The \defterm{level filtration on the lattice homology $\bH_*(C, o)$ of $(C,o)$} is the increasing ($-\bN$)-indexed filtration $F_*\bH_*(C, o)$ of bigraded $\bZ[U]$-modules given by setting $$F_{-d}\bH_*(C, o) := \bigoplus_{n \in \bZ} \im(H_*(S_n \cap \mFX_{-d}, \bZ) \to H_*(S_n, \bZ))$$ for each $d \in \bN$, where the maps in homology are those induced by the inclusions $S_n \cap \mFX_{-d} \hookrightarrow S_n$ and the $U$-action is given as before by the inclusions among the $S_n$.
        
        \item The \defterm{spectral sequence of $(C, o)$ associated to the level filtration} is the homological spectral sequence of (singly-)graded $\bZ[U]$-modules given by the direct sum of the spectral sequences induced by the filtrations $\{S_n \cap \mFX_{-d}\}_{d \in \bZ}$ of the spaces $S_n$; the first page is then given by $$E_{-d,q}^1 = \bigoplus_{n \in \bZ} H_{-d+q}(S_n \cap \mFX_{-d}, S_n \cap \mFX_{-d-1}, \bZ).$$ This converges to the associated graded module of the level filtration on the lattice homology of $(C,o)$; that is, $$E_{-d,q}^k \Rightarrow E_{-d,q}^\infty = \gr_{-d}^F \bH_{-d+q}(C, o).$$
    \end{enumerate}
\end{defn}

The graded pieces $(E^1_{*,*})_{-2n}$ of the $E^1$-page under the doubleweight grading will be particularly important in the next discussions. In fact, they admit a refinement (defined only at the $E^1$-page level and not as part of the spectral sequence as a whole)---if we let $\mFY_{-\ell}$ be the full subcomplex of $\bR_{\ge 0}^r$ spanned by the vertices $\ell' \in \bN^r$ such that $\ell' \ge \ell$, we have the following:

\begin{prop}[\cite{NFilt}] \label{prop:improvepg1}
    For any $\ell \in \bN^r$ and $q, n \in \bZ$, set $$E_{-\ell,q}^1 := \bigoplus_{n \in \bZ} H_{-|\ell|+q}(S_n \cap \mFY_{-\ell}, S_n \cap \mFY_{-\ell} \cap \mFX_{-|\ell|-1})$$ with the usual $\bZ[U]$-module structure. Then we have a direct sum decomposition $$E_{-d,q}^1 = \bigoplus_{\ell \in \bN^r,\, |\ell| = d} E_{-\ell,q}^1$$ of (singly-)graded $\bZ[U]$-modules.
\end{prop}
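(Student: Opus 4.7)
The plan is to establish the decomposition at the chain level and then pass to homology. The key observation is that, for any cube $\square_{\ell', I'}$ of the cubical decomposition of $\bR_{\ge 0}^r$, the vertex $\ell'$ is the coordinatewise minimum of the vertex set $\{\ell' + \sum_{j \in J} e^j \mid J \subseteq I'\}$; in particular, $\ell'$ is the unique vertex minimizing the functional $|\cdot|$, and the pair $(\ell', I')$ is intrinsically recovered from the cube.

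First, I would record the resulting characterizations of cube membership in the subcomplexes at hand. Since each of $\mFX_{-d}$, $\mFX_{-d-1}$, $\mFY_{-\ell}$, and $S_n$ is a full subcomplex, a cube lies inside any one of them exactly when all of its vertices do. Using the minimality observation, these conditions reduce respectively to $|\ell'| \ge d$, $|\ell'| \ge d + 1$, $\ell' \ge \ell$, and $w_q(\square_{\ell', I'}) \le n$.

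Next, I would examine the relative chain complex $C_* := C_*(S_n \cap \mFX_{-d}, S_n \cap \mFX_{-d-1})$, which is freely generated as an abelian group by those cubes $\square_{\ell', I'}$ with $|\ell'| = d$ and all vertex weights at most $n$. The main step is to analyze the relative boundary of such a generator: its geometric faces are the ``bottom'' faces $\square_{\ell', I' \setminus \{i\}}$ and the ``top'' faces $\square_{\ell' + e^i, I' \setminus \{i\}}$ for $i \in I'$, and the latter all have base vertex of $|\cdot|$-value $d + 1$ and hence lie in $\mFX_{-d-1}$, so they vanish in the quotient. Thus the relative differential preserves the base vertex, and $C_*$ decomposes as a direct sum of subcomplexes $C_*^\ell$ indexed by the lattice points $\ell$ with $|\ell| = d$, where $C_*^\ell$ is spanned by those cubes $\square_{\ell, I}$ all of whose vertices have weight at most $n$.

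Finally, I would identify $C_*^\ell$ with the relative chain complex $C_*(S_n \cap \mFY_{-\ell}, S_n \cap \mFY_{-\ell} \cap \mFX_{-|\ell|-1})$: by the cube-membership conditions above, a cube $\square_{\ell', I'}$ contributing nontrivially to the latter satisfies $\ell' \ge \ell$ and $|\ell'| = |\ell|$, forcing $\ell' = \ell$, with the weight constraint unchanged. Passing to homology, summing over $n$ to recover the doubleweight grading, and noting that the $U$-action induced by the inclusions $S_n \hookrightarrow S_{n+1}$ commutes with all intersections under consideration yields the claimed $\bZ[U]$-module decomposition. The argument is essentially formal, so no serious obstacle is anticipated; the content lies in the observation that, modulo $\mFX_{-d-1}$, each cube in the relative complex remembers its base vertex, together with the fact that the cubical boundary respects this memory.
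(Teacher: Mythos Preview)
Your argument is correct. The paper does not actually give a proof of this proposition; it is stated as a citation from \cite{NFilt}, so there is no in-paper proof to compare against. Your chain-level decomposition---observing that modulo $\mFX_{-d-1}$ only the bottom faces of a cube survive, so the relative differential preserves the base vertex $\ell'$, and then identifying the $\ell$-summand via the constraint $\ell' \ge \ell$, $|\ell'| \le |\ell|$ forcing $\ell' = \ell$---is the standard way to see this and is essentially what underlies the cited reference as well.
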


We emphasize that all the embedded cubical subcomplexes $S_n$, and hence all the entries of all spectral sequences, are well-defined invariants of reduced curve singularities. In this note we will focus mainly on the minimal value of the weight function $w_0$, which is already visible at the level of the ordinary lattice homology, and on the nonvanishing properties of $(E_{-d,q}^1)_{-2n}$, which require some of this additional information.

\begin{rem} \label{rem:functor}
    Let $\pi: (C', o') \to (C, o)$ be a finite birational map of reduced complex-analytic curve germs. Then Lemma \ref{lem:FEAT} implies that, for each $n \in \bZ$, there is a naturally-induced inclusion $S_n^{C'} \hookrightarrow S_n^C$ of the weight-$(\le\!n)$ spaces which moreover respects the level filtration. These inclusions thus functorially induce graded $\bZ[U]$-module morphisms of the corresponding lattice homology groups $\bH_*(-)$, of the filtered pieces $F_*\bH_*(-)$, and of the spectral sequence entries $E_{*,*}^k$. That is, all these homological invariants define covariant functors on the category of reduced complex-analytic curve germs with finite birational maps.
    
    In fact, all the above facts also hold for the inclusions of subcurves $(C_J,o) \subseteq (C,o)$ (see Subsection \ref{subsec:lattincl}); the identification of $\bN^{|J|}$ with a sublattice of $\bN^r$ gives a natural inclusion $S_n^{C_J} \hookrightarrow S_n^C$ respecting the level filtration for each $n \in \bZ$, whence we obtain the functoriality of our homological invariants as before. Indeed, by combining this observation with the previous one, we find that the invariants define functors on the category of reduced complex-analytic curve germs with finite maps which do not have degree larger than one over any component.
\end{rem}

\subsection{Minimal Spectral Cycles}
\label{subsec:minspect}

As mentioned, we will be interested in the non-vanishing of concrete graded summands $(E^1_{-d, q})_{-2n}$; as we will see in Sections \ref{sec:fcmt_spect} and \ref{sec:tcmt_spect}, such non-vanishing properties can be used to characterize and separate 
different families of singularities. Continuing with the conventions of Section \ref{sec:setup}, we begin by noting that the non-vanishing of individual summands in the direct sum decomposition of Proposition \ref{prop:improvepg1} imposes constraints on the behavior of the weight function $w_0$:

\begin{lem} \label{lem:filtcycstruct}
    Fix $\ell \in \bN^r$ and $q, n \in \bZ$. If the direct summand  $(E_{-\ell,q}^1)_{-2n}$ is nonzero, then $-|\ell| + q \ge 0$ and there exist indices $1 \le i_0 < i_1 < \ldots < i_{-|\ell|+q} \le r$ such that, for each subset $I \subseteq \{i_0, \ldots, i_{-|\ell|+q}\}$, $w_0\left(\ell + \sum_{i \in I} e^i\right) = n + |\ell| - q + |I|$.
\end{lem}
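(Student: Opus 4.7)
My plan is to interpret the group $(E^1_{-\ell,q})_{-2n}$, which by Proposition~\ref{prop:improvepg1} is the relative homology $H_{-|\ell|+q}(X_n, A_n)$ of the pair $X_n := S_n \cap \mFY_{-\ell}$ and $A_n := S_n \cap \mFY_{-\ell} \cap \mFX_{-|\ell|-1}$, as the reduced simplicial homology of an abstract simplicial complex capturing the shape of $S_n$ near $\ell$. First I would observe that the cubes of $X_n$ not lying in $A_n$ are exactly those $\square_{\ell, I}$, $I \subseteq \{1, \ldots, r\}$, for which $\square_{\ell, I} \subseteq S_n$, i.e., $w_0(\ell + \sum_{j \in J} e^j) \le n$ for every $J \subseteq I$; call such $I$ \emph{admissible}. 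Let $\Delta_n$ denote the abstract simplicial complex on $\{1, \ldots, r\}$ whose simplices are the nonempty admissible subsets ($I$ regarded as a simplex of dimension $|I| - 1$). The cubical boundary of $\square_{\ell, I}$ involves, for each $i \in I$, the codimension-one faces $\square_{\ell, I \setminus \{i\}}$ and $\square_{\ell + e^i, I \setminus \{i\}}$; the latter lies entirely in $A_n$ (all its vertices are strictly greater than $\ell$) and so is collapsed in the quotient, leaving the relative boundary $\sum_{i \in I} \pm \square_{\ell, I \setminus \{i\}}$, which is precisely the simplicial boundary on $\Delta_n$ up to a degree shift of one. Consequently $(E^1_{-\ell,q})_{-2n} \cong \tilde H_{k-1}(\Delta_n; \bZ)$ for $k := -|\ell| + q$, and the vanishing of reduced simplicial homology in degrees $< -1$ immediately gives $k \ge 0$, the first assertion of the lemma.

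The heart of the proof is then to deduce from $\tilde H_{k-1}(\Delta_n) \ne 0$ the existence of a \emph{missing $k$-simplex}, i.e., a $(k+1)$-subset $I^* \subseteq \{1, \ldots, r\}$ with $I^* \notin \Delta_n$ but every proper subset of $I^*$ in $\Delta_n$. This is the step I expect to be the main obstacle, because the analogous statement for arbitrary simplicial complexes is false (a 4-cycle has $\tilde H_0 \ne 0$ with no missing 1-simplex, and the octahedral triangulation of $S^2$ has $\tilde H_2 \ne 0$ with no missing 3-simplex), so the proof must exploit special structure on $\Delta_n$. The key input is the matroid rank inequality (\ref{eq:matroid}): the restriction of the Hilbert function $\mFh$ to $\ell + \{0, 1\}^r \subseteq \bN^r$ is a submodular function of $J \subseteq \{1, \ldots, r\}$, and this equips $\Delta_n$ with a matroid-like rigidity that rules out the counter-examples above (their putative matroid rank functions would force non-transitive parallelism). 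Concretely, I would lift a nontrivial cycle representative of the class to a bounding $(k+1)$-chain in the acyclic full-simplex chain complex on $\{1, \ldots, r\}$ and then, via a minimality/exchange argument exploiting the submodularity of $f(J) := w_0(\ell + \sum_{j \in J} e^j)$, extract a minimally non-admissible $(k+1)$-subset from the support of this bounding chain.

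Once $I^*$ is in hand, the final step is to verify the weight identities $w_0(\ell + \sum_{i \in I} e^i) = n - k + |I|$ for every $I \subseteq I^*$ by downward induction on $t := k + 1 - |I|$. For $t = 0$, $I^* \notin \Delta_n$ combined with every $I^* \setminus \{i\}$ admissible forces, via the step rule (\ref{eq:wtdiff}), that $w_0(\ell + \sum_{i \in I^*} e^i) = n + 1$. The case $t = 1$ is immediate from the step rule together with admissibility of $I^* \setminus \{i\}$ and the $t = 0$ value. For the inductive step with $t \ge 2$, pick two distinct $i_1, i_2 \in I^* \setminus I$ (possible since $|I^* \setminus I| = t \ge 2$); the inductive hypothesis supplies the value $n + 2 - t$ at $\ell + \sum_{I \cup \{i_j\}} e^i$ and $n + 3 - t$ at $\ell + \sum_{I \cup \{i_1, i_2\}} e^i$, and the matroid rank inequality (\ref{eq:matroid}) then upper-bounds $w_0(\ell + \sum_I e^i)$ by $n + 1 - t$, which the step rule pins down to equality. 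This completes the proof modulo the combinatorial extraction of the middle paragraph.
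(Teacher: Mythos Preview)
Your overall strategy---reduce to $\tilde H_{k-1}(\Delta_n)$, extract a minimal $(k+1)$-non-face $I^*$, then verify the weight identities by downward induction via (\ref{eq:wtdiff}) and (\ref{eq:matroid})---is exactly the paper's, and your first and last steps are correct and match it closely. You are also right that the extraction of $I^*$ is the crux and that it genuinely needs the matroid input (your octahedron example is valid; the ``4-cycle'' one seems garbled, since a 4-cycle is connected). The paper dispatches this step by asserting that the relative homology is \emph{generated} by the boundary-cycles of such cubes, with only a terse parenthetical about suspension and a long exact sequence as justification, so the point you flag is exactly where the paper's argument is also most compressed.

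What your vague ``minimality/exchange'' sketch is missing is the following simplification. Setting $g(I) := \mFh(\ell + e^I) - \mFh(\ell)$, the unit-step and submodularity properties of $\mFh$ make $g$ the rank function of a matroid $M$ on $\{1,\dots,r\}$, and one checks that $\max_{J \subseteq I}\bigl(w_0(\ell+e^J) - w_0(\ell)\bigr) = g(I)$, the maximum being attained at any basis of $I$. Thus $\Delta_n = \{I : g(I) \le m\}$ for $m := n - w_0(\ell) \ge 0$, and its minimal non-faces are precisely the independent $(m+1)$-sets---in particular they all have the \emph{same} size $m+1$. Now for $k < m$ every $(k+1)$-subset lies in $\Delta_n$, so the relative chain group already vanishes; for $k = m$ every $(m+1)$-non-face is automatically minimal, so the paper's generation claim (hence the existence of $I^*$) is immediate. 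For $k > m$ there are no minimal non-faces of size $k+1$, so one must show $\tilde H_{k-1}(\Delta_n) = 0$ separately; this follows by a routine deletion/contraction Mayer--Vietoris induction on $r$ (the link of any non-loop $e$ in $\Delta_n$ is $\Delta_{m-1}(M/e)$ and its deletion is $\Delta_m(M\setminus e)$). Altogether this forces $k = m$, and any independent $(k+1)$-set then serves as $I^*$; since every $I \subseteq I^*$ is independent one has $w_0(\ell+e^I) - w_0(\ell) = 2g(I) - |I| = |I|$, which gives the weight identities directly and in fact bypasses the downward induction.
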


\begin{proof}
    Let $B = R(\ell, \ell + e) \subset {\bR_{\ge 0}}^r$, and define $A = B \cap \mFX_{-|\ell|-1}$ to be the subcomplex of $B$ spanned by all its vertices other than $\ell$ itself. Then, by excision and deformation retraction of pairs, $(E_{-\ell,q}^1)_{-2n} = H_{-|\ell|+q}(S_n \cap \mFY_{-\ell}, S_n \cap \mFY_{-\ell} \cap \mFX_{-|\ell|-1}) \cong H_{-|\ell| + q}(S_n \cap B, S_n \cap A)$.
    
    This relative homology group consists of $\bZ$-linear combinations of $(-|\ell|+q)$-dimensional lower faces of $B$ with boundary contained in $A$ which are moreover contained in $S_n$, modulo those which can be realized as the boundaries of such combinations of $(-|\ell|+q+1)$-dimensional lower faces contained in $S_n$. As such, it is generated for $|\ell| + q > 0$ by the $(-|\ell|+q+1)$-dimensional cubes rooted at $\ell$ such that, for each, all lower faces are contained in $S_n$ but the interior is not. (To see this formally, use the isomorphism $H_{-|\ell| + q}(S_n \cap B, S_n \cap A) \cong \tilde H_{-|\ell| + q}((S_n \cap B)/(S_n \cap A))$, identify the quotient space with the suspension of the appropriate simplicial complex on a subset of the vertices $\ell + e^i$ for $1 \le i \le r$, and apply the long exact sequence in relative homology.) In particular, for this homology group to be nonzero as hypothesized, it is necessary that at least one such cube exist---in the $|\ell| + q = 0$ case, it is clear that all 1-dimensional cubes rooted at $\ell$ must be of this form.
    
    We can identify each such cube with the indices $1 \le i_0 < i_1 < \ldots < i_{-|\ell|+q} \le r$ such that the vertices of the cube are given by $\ell + \sum_{i \in I} e^i$ for all subsets $I \subseteq \{i_0, \ldots, i_{-|\ell|+q}\}$; on the level of vertices, our condition on containments and non-containments in $S_n$ then becomes the requirement that $w_0^C\left(\ell + \sum_{i \in I} e^i\right) \le n$ when $I$ is a proper subset of $\{i_0, \ldots, i_{-|\ell|+q}\}$ and $w_0^C\left(\ell + \sum_{i \in I} e^i\right) > n$ for $I = \{i_0, \ldots, i_{-|\ell|+q}\}$. Hence, by (\ref{eq:wtdiff}), our top vertex must have weight exactly $n+1$ and those on the next level down, which correspond to subsets satisfying $|I| = -|\ell| + q$, must have weight exactly $n$. Then a downward induction on the levels of our cube, together with (\ref{eq:wtdiff}) and (\ref{eq:matroid}), proves that our weight requirement for the lower faces of the cube is equivalent to the equality $w_0^C\left(\ell + \sum_{i \in I} e^i\right) = n + |\ell| - q + |I|$ holding for all subsets $I \subseteq \{i_0, \ldots, i_{-|\ell|+q}\}$.
\end{proof}

Thus we obtain a fairly explicit description of the substructures which must appear in our weighted lattice in order for particular terms of the first page of the spectral sequence to be nonzero. In particular, if we wish to understand where such nonzero terms can appear, it will often suffice to study the behavior of the weight function $w_0$ directly. As a coarse constraint, (\ref{eq:wtdiff}) now imposes clear bounds on the allowable indices for nonzero terms in this $E^1$-page; by taking into account the multiplicity vector $m$ of $(C,o)$, we can sharpen these for certain indices which behave well with respect to multiples of this vector's weight $2-|m|$ (although more general statements are also possible in theory---see Remark \ref{rem:mincycle}(b)):

\begin{lem} \label{lem:spectvan}
    Suppose $|m| \ge 3$. For any $j, k \in \bN$, we have $(E_{-d,k+d}^1)_{-2((2-|m|)j+k)} = 0$ for each integer $d < j|m|$ and $(E_{-\ell,k+|m|j}^1)_{-2((2-|m|)j+k)} = 0$ for each $\ell \in \bN^r$ with $|\ell| = j|m|$ but $\ell \ne jm$.
\end{lem}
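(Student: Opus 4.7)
The plan is to apply Lemma \ref{lem:filtcycstruct} to reduce the vanishing statement to an inequality on the weight function $w_0$, and then to establish that inequality by producing $\bC$-linearly independent elements in $\cO/\cF(\ell)$ via the powers of a carefully chosen element of $\mFm$.

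First I would invoke Lemma \ref{lem:filtcycstruct}: for either indexed summand to be nonzero, the vertex condition requires $w_0(\ell) = n + |\ell| - q$, which in both parts of the statement works out to $(2-|m|)j$ by direct computation. It therefore suffices to show $w_0(\ell) > (2-|m|)j$ whenever $|\ell| \le j|m|$ and $\ell \ne jm$; the cases with mismatched parities of $|\ell|$ and $j|m|$ are automatic since $w_0(\ell) \equiv |\ell| \pmod{2}$.

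Next I would produce a non-zero-divisor $f \in \mFm$ with $\mFv_i(f) = m_i$ for every $1 \le i \le r$, obtained as a generic $\bC$-linear combination of elements $g_i \in \mFm$ realizing $\mFv_i(g_i) = m_i$ (these exist by the definition of the multiplicity vector $m$). Setting $K := \lceil \max_i (\ell_i / m_i) \rceil - 1$, I would argue that the images of $1, f, f^2, \ldots, f^K$ in $\cO/\cF(\ell)$ are $\bC$-linearly independent. Indeed, in a nontrivial combination $\sum_{k=0}^{K} c_k f^k$, the valuations $\mFv_i(c_k f^k) = k m_i$ of the nonzero terms strictly increase in $k$, so no cancellation occurs at the minimum and $\mFv_i\bigl(\sum_k c_k f^k\bigr) = k_{\min}\, m_i$ for $k_{\min} := \min\{k : c_k \ne 0\}$. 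Hence membership in $\cF(\ell)$ would force $k_{\min} m_i \ge \ell_i$ for all $i$, giving $k_{\min} \ge \lceil \max_i (\ell_i / m_i) \rceil > K$, a contradiction. We conclude $\mFh(\ell) \ge \lceil \max_i (\ell_i / m_i) \rceil$.

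Finally, I would verify that $\max_i (\ell_i / m_i) > j - (j|m| - |\ell|)/2$ in each case. For the second part this is immediate, since $|\ell| = j|m|$ together with $\ell \ne jm$ forces some $\ell_i > jm_i$; and for the first part, the contrary assumption $\ell_i \le \bigl(j - (j|m| - |\ell|)/2\bigr) m_i$ for all $i$, summed over $i$, contradicts the combination of $|m| \ge 3$ and $|\ell| < j|m|$ through an elementary calculation. Taking ceilings yields $\mFh(\ell) \ge j + 1 - (j|m| - |\ell|)/2$, whence $w_0(\ell) \ge (2-|m|)j + 2 > (2-|m|)j$, contradicting the requirement and proving the vanishing. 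The main technical point is the linear independence of the $f^k$ modulo $\cF(\ell)$, which crucially uses that $\mFv_i(f) = m_i$ exactly---and not merely $\ge m_i$---so that the valuations of successive nonzero terms are distinct and no accidental cancellation can destroy the distinct-valuation argument.
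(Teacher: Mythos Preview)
Your proof is correct and reaches the same destination as the paper's, but by a different road to the key bound. Both arguments reduce via Lemma~\ref{lem:filtcycstruct} to showing $w_0(\ell) \ne (2-|m|)j$ for $|\ell| \le j|m|$ with $\ell \ne jm$, and both in fact establish the same intermediate inequality $w_0(\ell) \ge 2\tilde\jmath - |\ell|$, where $\tilde\jmath := \lceil \max_i(\ell_i/m_i)\rceil$ is the least integer with $\ell \le \tilde\jmath m$. The paper obtains this combinatorially: it walks an increasing lattice path from $0$ to $\ell$ and uses \eqref{eq:wtdiff} together with the semigroup elements $0, m, 2m, \ldots$ to count at least $\tilde\jmath$ weight-increasing steps. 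You instead bound $\mFh(\ell)$ directly by exhibiting the $\tilde\jmath$ linearly independent classes $1, f, \ldots, f^{\tilde\jmath - 1}$ in $\cO/\cF(\ell)$ for a suitable $f$ with valuation vector exactly $m$. Your approach is slightly more algebraic and self-contained (it does not invoke \eqref{eq:wtdiff} or the semigroup description), while the paper's path argument stays entirely within the combinatorics of the weighted lattice already set up in Section~\ref{sec:setup}. The endgame---turning $2\tilde\jmath - |\ell| > (2-|m|)j$ into the desired vanishing---is essentially the same computation in both, with the paper doing explicit case analysis on $\tilde\jmath$ versus $j$ and you summing the contrary inequalities over~$i$; your parity shortcut for half the cases is a nice efficiency the paper does not use.
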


\begin{proof}
    Fix some $\ell \in \bN^r$ such that $|\ell| \le j|m|$ but $\ell \ne jm$; by Lemma \ref{lem:filtcycstruct}, it suffices to show that $w_0(\ell) \ne (2-|m|)j$.
    
    Let $\tilde\jmath \in \bN$ be minimal such that $\ell \le \tilde\jmath m$ and consider any increasing path $\{x_s\}_{s=0}^{|\ell|}$ in $\bN^r$ such that $x_0=0$, $x_s = x_{s-1} + e^{i(s)}$ for some $1 \le i(s) \le r$ when $1 \le s \le |\ell|$, and $x_{|\ell|}=\ell$; note that, by (\ref{eq:wtdiff}), $w_0(x_s) - w_0(x_{s-1}) = \pm 1$ for any $1 \le s \le |\ell|$, and hence, if we let $\upsilon \ge 0$ be the number of such values $s$ where this difference is positive, $w_0(\ell) = 2\upsilon - |\ell|$. Indeed, (\ref{eq:wtdiff}) and the fact that the lattice points $\{am\}_{a \in \bN}$ are semigroup elements imply that, for any such $s$ where $x_{s-1} \le am$ but $x_s \not\le am$ for some $a \in \bN$, this difference is $+1$; the minimality of $\tilde\jmath$ then implies that $\upsilon \ge \tilde\jmath$ and so $w_0(\ell) \ge 2\tilde\jmath - |\ell|$. Thus it now suffices to show that $2\tilde\jmath - |\ell| > (2-|m|)j$, or $j|m| - |\ell| > 2(j - \tilde\jmath)$.
    
    If $\tilde\jmath > j$---that is, if $\ell \not\le jm$---then this is immediate from the hypothesis $|\ell| \le j|m|$. On the other hand, if $\tilde\jmath < j$---which entails that $\ell \le \tilde\jmath m < jm$---we have $j|m| - |\ell| \ge j|m| - \tilde\jmath|m| = |m|(j - \tilde\jmath)$, and the result now follows by the hypothesis $|m| \ge 3 > 2$. Finally, we note that, if $\tilde\jmath = j$, the relation $\ell \le jm$ and hypothesis $\ell \ne jm$ guarantee that $|\ell| < j|m|$ and so $j|m| - |\ell| > j|m| - j|m| = 0 = 2(j - \tilde\jmath)$.
\end{proof}

Hence we can see when $|m| \ge 3$ that, for any fixed homological degree $k$ and weight $n$ such that $n = (2 - |m|)j + k$ for some $j \in \bN$, $d = j|m|$ is the least natural number such that $(E_{-d,k+d}^1)_{-2n}$ can possibly be nonzero, and in this case we have $(E_{-d,k+d}^1)_{-2n} = (E_{-jm,k+d}^1)_{-2n}$. Elements of this relative homology group are thus interesting enough to warrant a name:

\begin{defn} \label{def:mincycle}
    Suppose again that $|m| \ge 3$. Then, for any $j, k \in \bN$ and $n = (2-|m|)j+k$, we define the \defterm{group of minimal spectral $k$-cycles of weight $n$ for $(C, o)$} by $\mFM_{k,n} = \mFM_{k,n}(C, o) := (E_{-j|m|,k+j|m|}^1)_{-2n} = (E_{-jm,k+j|m|}^1)_{-2n}$. In a slight abuse of terminology, we will say that $(C, o)$ \defterm{has a minimal spectral $k$-cycle of weight $n$} if $\mFM_{k,n}(C, o) \ne 0$.
\end{defn}

\begin{rem} \label{rem:mincycle}
    Here some comments are in order.
    \begin{enumerate}[label=(\alph*)]
        \item The terms $(E_{-d,k+d}^1)_{-2n}$ in the $E^1$-page, for $d \in \bZ$ and $k, n \in \bN$, measure relative homological degree-$k$ features of successive filtered pieces of the space $S_n$ under the level filtration at the $d$th level of $\bN^r$. Hence, as mentioned above, $(C, o)$ has a minimal spectral $k$-cycle of weight $(2-|m|)j+k$ exactly when such relative homological-degree-$k$ features of $S_{(2-|m|)j+k}$ appear at the lowest level of $\bN^r$ where they possibly can, according to the bound established in Lemma \ref{lem:spectvan}; this is the motivation for the term ``minimal''. (Note that this minimality is relative to the fixed multiplicity $|m|$.) We call our cycles ``spectral'' to emphasize that they appear in the $E^1$-page of our spectral sequence rather than in the ordinary lattice homology groups of $(C,o)$. (E.g., for $D_n$, $n \ge 5$ odd, $\mFM_{1,0} = (E^1_{-3,4})_0\not=0$ but $\bH_1=0$.)
        
        \item Our definition is not as broad as possible; specifically, one could meaningfully give a more general notion of the group $\mFM_{k,n}(C, o)$ of minimal spectral cycles of {\it any} weight $n$, not just one of the form $(2-|m|)j+k$ for some $j \in \bN$. However, dealing with these weights would require a more general version of Lemma \ref{lem:spectvan}, correspondingly more difficult to state and prove; since the given definition encompasses all cases relevant to our present purposes, we omit these considerations.
        
        \item The statements of Lemmas \ref{lem:filtcycstruct} and \ref{lem:spectvan} are compatible with the general vanishing (\ref{eq:P2}); in fact, Lemma \ref{lem:filtcycstruct} provides a new proof of this vanishing.
    \end{enumerate}
\end{rem}

\begin{examples}\label{ex:mincycs}
    In fact, the following are all the cases which will appear in our classifications and characterizations:
    \begin{itemize}
        \item If $|m| = 3$, then $\mFM_{1,0}(C, o) = (E_{-3,4}^1)_0 = (E_{-m,1+|m|}^1)_{-2(0)}$. (Here $j = 1$.)
        
        \item If $|m| = 3$, then $\mFM_{1,-1}(C, o) = (E_{-6,7}^1)_2 = (E_{-2m,1+2|m|}^1)_{-2(-1)}$. (Here $j = 2$.)
        
        \item If $|m| = 4$, then $\mFM_{1,-1}(C, o) = (E_{-4,5}^1)_2 = (E_{-m,1+|m|}^1)_{-2(-1)}$. (Here $j = 1$.)
    \end{itemize}
    (Indeed, under Definition \ref{def:mincycle}, these are \emph{all} the possible cases where it makes sense to discuss minimal filtered 1-cycles of weight either 0 or $-1$ for $(C, o)$, as we see by finding all natural number solutions $j, |m| \in \bN$ to the equations $(2 - |m|)j + 1 = 0$ and $(2 - |m|)j + 1 = -1$ respectively; however, the constraints on $|m|$ are in some sense artificial, and adopting a more general definition along the lines suggested in Remark \ref{rem:mincycle}(b) would give us definitions of $\mFM_{1,0}(C, o)$ and $\mFM_{1,-1}(C, o)$ for any $|m| \ge 3$.)
\end{examples}

We observe a consequence of the existence of minimal spectral cycles on the shape of the semigroup:

\begin{lem} \label{lem:spectsemigp}
    As before, take $|m| \ge 3$, $j, k \in \bN$, and $n = (2-|m|)j+k$. Then, if $\mFM_{k, n}(C, o) \ne 0$, we have $\cS_C \setminus \{0, m, \ldots, (j-1)m\} \subseteq jm + \bN^r$.
\end{lem}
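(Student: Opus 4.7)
The plan is to translate the nonvanishing hypothesis into a rigid constraint on the weight function, extract from that a characterization of the semigroup on the diagonal multiples of $m$, and then contradict the existence of a supposed $s \in \cS_C \setminus \{0, m, \ldots, (j-1)m\}$ with $s \not\ge jm$ by producing a ``stray'' semigroup element in one of the boxes $[am, (a+1)m]$.

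First, I would observe that applying Lemma \ref{lem:filtcycstruct} to $\ell = jm$, $q = k + j|m|$ with $I = \emptyset$ converts the hypothesis $\mFM_{k,n}(C,o) \ne 0$ into the single identity $w_0(jm) = (2-|m|)j$, matching the lower bound established in the proof of Lemma \ref{lem:spectvan}. I would then upgrade this to $w_0(am) = (2-|m|)a$, or equivalently $\mFh(am) = a$, for every $0 \le a \le j$. This follows from the threshold-crossing version of the Lemma \ref{lem:spectvan} argument applied separately to paths $0 \to am$ and $am \to jm$: along the second path, each of the $j-a$ thresholds $\{\ell \le bm\} \to \{\ell \not\le bm\}$ (for $b = a, \ldots, j-1$) can be crossed only at a genuine $+1$-step by (\ref{eq:wtdiff}), and since $m_i \ge 1$ these crossings correspond to distinct edges, giving $w_0(jm) - w_0(am) \ge (2-|m|)(j-a)$. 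Combined with the analogous inequality $w_0(am) \ge (2-|m|)a$ and the hypothesis, equality must hold throughout.

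Next, I would fix $0 \le a \le j - 1$ and show $\cS_C \cap [am, (a+1)m] = \{am, (a+1)m\}$. Since $am \in \cS_C$, the identity (\ref{eq:wtdiff}) forces $\mFh(am + e^i) = a + 1$ for every $i$; combined with monotonicity and $\mFh((a+1)m) = a + 1$, this pins $\mFh(\ell) = a + 1$ for all $am \lneq \ell \le (a+1)m$. If such an $\ell \in \cS_C$ had $\ell_i < (a+1)m_i$ for some $i$, then $\ell + e^i \le (a+1)m$ would give $\mFh(\ell + e^i) \le a + 1 = \mFh(\ell)$, contradicting $\overline{\Delta}_i(\ell) \ne \emptyset$. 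Hence $\ell = (a+1)m$.

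Finally, I would suppose for contradiction that some $s \in \cS_C \setminus \{0, m, \ldots, (j-1)m\}$ satisfies $s \not\ge jm$, and let $a_0$ be the largest integer with $a_0 m \le s$. Then $0 \le a_0 \le j - 1$ and $(a_0+1)m \not\le s$ by maximality. Set $\tilde s := \min(s, (a_0+1)m)$, which lies in $\cS_C$ by the min-closure property of the semigroup. A small case check (using $m_i \ge 1$ for every $i$) gives $\tilde s \ne a_0 m$ (otherwise $s = a_0 m$, excluded) and $\tilde s \ne (a_0+1)m$ (otherwise $(a_0+1)m \le s$, violating maximality of $a_0$). This produces a semigroup element strictly inside $[a_0 m, (a_0+1)m]$, contradicting the previous paragraph.

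The main obstacle is the threshold-crossing step, where one must verify that the transitions associated to distinct $b$'s genuinely occur on different edges of the path and genuinely produce $+1$-steps; the rest is essentially bookkeeping with the coordinate-wise order and the semigroup's closure under $\min$. A secondary annoyance is carefully ensuring $\tilde s$ is \emph{strictly} interior to the box at the end, which requires separate consideration of the endpoints $a_0 = 0$ and of coordinates where $s$ may exceed $(a_0+1)m$.
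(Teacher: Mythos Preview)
Your proof is correct and follows essentially the same strategy as the paper's: extract $w_0(jm) = (2-|m|)j$ from Lemma \ref{lem:filtcycstruct}, locate a putative bad $s$ between two successive multiples of $m$, and use the minimum with $(\tilde\jmath+1)m$ to manufacture a contradiction. The paper phrases the contradiction slightly differently---rather than first proving $\mFh(am)=a$ and deducing $\cS_C \cap [am,(a+1)m] = \{am,(a+1)m\}$, it directly exhibits an extra $+1$-step along a path $0 \to m \to \cdots \to jm$ passing through $\ell' := \min\{s,(\tilde\jmath+1)m\}$, using $s \in \overline{\Delta}_i(\ell')$ rather than the $\min$-closure of $\cS_C$---but this is a minor repackaging of the same idea.
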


\begin{proof}
    As in the proof of Lemma \ref{lem:spectvan}, we consider sequences of lattice points with successive differences in $\{e^1, \ldots, e^r\}$ which start at $0$ and end at $jm$; in particular, we will restrict our attention to those which pass through all of the intervening multiples $0, m, \ldots, jm$ of the multiplicity vector. By (\ref{eq:wtdiff}), the difference in weights between any two successive lattice points along such a path is $\pm 1$; since $0, m, \ldots, (j-1)m \in \cS_C$, we know that at least $j$ of these steps give increases in weight, so $w_0^C(jm) \ge j - (j|m| - j) = (2 - |m|)j$, with a failure of equality if any step from a point other than $0, m, \ldots, (j-1)m$ increases the weight. In particular, Lemma \ref{lem:filtcycstruct} tells us that this cannot occur if $\mFM_{k, n}(C, o) \ne 0$.
    
    Suppose we have $\ell = (\ell_1, \ldots, \ell_r) \in \cS_C \setminus \{0, m, \ldots, (j-1)m\}$ which is not contained in $jm + \bN^r$, and let $0 \le \tilde\jmath < j$ be the greatest natural number such that $\tilde\jmath m \le \ell$. We will produce a path from $\tilde\jmath m$ to $(\tilde\jmath + 1)m$ which has at least one weight increase after the first step; since we can concatenate this with other paths between successive multiples of $m$ to produce a path of the sort discussed above, this will preclude the possibility that $\mFM_{k, n}(C, o) \ne 0$.

    Let $\ell' := \min\{\ell, (\tilde\jmath + 1)m\}$ and note by the definition of $\tilde\jmath$ that $\ell' \ne (\tilde\jmath + 1)m$. Moreover, since $\tilde\jmath m + e^i \le (\tilde\jmath + 1)m$ for all $1 \le i \le r$, the hypothesis that $\ell \ne \tilde\jmath m$ and so $\tilde\jmath m < \ell$ gives us $\ell' > \tilde\jmath m$ as well. If we take any path from $\tilde\jmath m$ to $(\tilde\jmath + 1)m$ which passes through $\ell'$, using (\ref{eq:wtdiff}) with the semigroup element $\ell$ then reveals that the weight will increase in the step from $\ell'$ to the next point. The result follows.
\end{proof}

We note as well that, by the discussion of the generators of each group $(E_{-\ell,q}^1)_{-2n}$ in the proof of Lemma \ref{lem:filtcycstruct} and the fact that each $\mFM_{k,n}(C, o)$ is given by $(E_{-jm,k+j|m|}^1)_{-2n}$, each group $\mFM_{k,n}$ for $k > 0$ will be generated by some subset of $\binom{r}{k+1}$ elements, the lower $(k+1)$-dimensional faces of the $r$-dimensional cube, with fixed relations among these arising from the lower $(k+2)$-dimensional faces; by continuing this reasoning, we find that $\mFM_{k,n}$ is a subgroup of an abelian group of rank $\binom{r}{k+1} - \binom{r}{k+2} + \binom{r}{k+3} - \cdots + (-1)^{r-k+1}\binom{r}{r} = \binom{r-1}{k}$. (If $k = 0$, it is immediate that the rank of $\mFM_{k,n}$ is bounded above by $\binom{r-1}{k} = 1$.) Therefore, since $r \le |m|$, we define:

\begin{defn} \label{def:maxrk}
    Suppose $|m| \ge 3$, choose $j, k \in \bN$, and set $n = (2 - |m|)j + k$. Then we say that the group $\mFM_{k,n}(C, o)$ of minimal spectral $k$-cycles of weight $n$ \defterm{has maximal rank} when $\rk \mFM_{k,n}(C, o) = \binom{|m|-1}{k}$.
\end{defn}

As with the definition of $\mFM_{k,n}(C, o)$ itself, we consider questions of minimality and maximality relative to the fixed multiplicity $|m|$---here the number of components is allowed to vary, and hence in particular a curve where any $\mFM_{k,n}(C, o)$ for $k > 0$ has maximal rank will satisfy $r = |m|$.

\begin{examples}
    For our purposes, the relevant cases will be as follows:
    \begin{itemize}
        \item If $|m| = 3$, then $\mFM_{1,-1}(C, o)$ has maximal rank if and only if $\rk \mFM_{1,-1}(C, o) = \rk (E_{-6,7}^1)_2 = 2$; note that this implies $r = |m| = 3$ and hence $m = (1, 1, 1)$.
        \item If $|m| = 4$, then $\mFM_{1,-1}(C, o)$ has maximal rank if and only if $\rk \mFM_{1,-1}(C, o) = \rk (E_{-4,5}^1)_2 = 3$; this implies $r = |m| = 4$ and hence $m = (1, 1, 1, 1)$.
    \end{itemize}
\end{examples}

\section{Computations for the foundational examples}
\label{sec:comput}

In this section, we present the $w_0$-values of most of the $ADE$ and $T_{pq}$ plane curve germs, both to give the reader examples to work with and to establish some results for our own later use. In particular, we will be interested in the minimal weight values for these germs, which will turn out to distinguish the $A$-germs, $DE$-germs, and $T_{pq}$ germs from one another, and in the behaviors of the appropriate groups of minimal spectral 1-cycles (a key novelty of the present note---see the preceding Subsection \ref{subsec:minspect}), which distinguish, e.g., $D$-germs from $E$-germs and encode various other important properties.

\subsection{Formulae for $\mFh$ and $w_0$}
\label{subsec:formul}

For our purposes, it will often be enough to know the weight values for certain key lattice points, rather than the full lattice; nevertheless, for the reader's convenience, we give them for points within the conductor rectangle $R(0, c)$, from which the weights for the full lattice can be recovered easily by (\ref{eq:wtdiff}) and the inclusion $c+\bN^r \subseteq \cS$. In particular, from (\ref{eq:wtdiff}) or Theorem \ref{thm:EUcurves}, we can conclude that $\min w_0 = \min w_0|_{R(0, c)}$. Indeed, using the symmetry (\ref{eq:GOR}), we can see that we need only fill in the $w_0$-values of `half' the points in $R(0, c)$.

Per Definition \ref{def:wt}, the $w_0$-values can be computed from the Hilbert function $\mFh$; these can be obtained from (\ref{eq:wtdiff}) and (\ref{eq:hfromS}) respectively if we know the semigroup of values $\cS$. Otherwise, we can find the Hilbert function using the following general formula, which will be our typical approach for $r \ge 2$. Recall that the Poincar\'{e} series (see, e.g., \cite{GorNem2015}) $P(\pvv{t}) = P_C(\pvv{t}) = \sum_{\ell \in \bN^r} \mFp(\ell) \pvv{t}^\ell = \sum_{\ell \in \bN^r} \mFp(\ell) {t_1}^{\ell_1} \cdots {t_r}^{\ell_r}$ of a curve germ $(C, o)$ is determined by the germ's Hilbert function according to the formula
\begin{equation} \label{eq:PO}
    \mFp(\ell) := \sum_{J \subseteq \{1, \ldots, r\}} (-1)^{|J|+1} \mFh(\ell+e^J), \quad \text{where } e^J := \sum_{j\in J} e^j.
\end{equation}
For arbitrary reduced complex-analytic curve germs, it can be the case that different Hilbert series $H(\pvv{t}) = H_C(\pvv{t}) := \sum_{\ell \in \bN^r} \mFh(\ell) \pvv{t}^\ell$ provide identical Poincar\'e series (see, e.g., \cite{cdg3}).  Hence, in general, the Hilbert function cannot be recovered from $P_C$. However, if we consider the Poincar\'e series of all subgerms $\{(C_J,o)\}_{J \subseteq \{1, \ldots, r\}}$
(cf. Subsection \ref{subsec:lattincl}), then this is possible:

\begin{thm}[{\cite[Theorem 3.4.3]{GorNem2015}}; see also  {\cite[Corollary 4.3]{julioproj}}] \label{thm:reconst}
    With the above notations, $$H_C({\bf t}) = \frac{1}{\prod_{i=1}^{r}(1-t_i)} \sum_{\emptyset \ne J = \{i_1, \ldots, i_{|J|}\} \subseteq \{1, \ldots, r\}} (-1)^{|J|-1} t_{i_1} \cdots t_{i_{|J|}} P_{C_J}(t_{i_1},\ldots,t_{i_{|J|}}).$$
\end{thm}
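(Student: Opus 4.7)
My approach is to clear the denominator and compare coefficients of $\pvv{t}^\ell$ on each side. Multiplying through by $\prod_{i=1}^r (1 - t_i)$, the claim reduces to showing
$$\prod_{i=1}^r (1 - t_i)\cdot H_C(\pvv{t}) \;=\; \sum_{\emptyset \ne J \subseteq \{1,\ldots,r\}} (-1)^{|J|-1} \left(\prod_{i \in J} t_i\right) P_{C_J}(\pvv{t}_J),$$
and since both sides are formal power series it suffices to verify equality coefficient-by-coefficient.

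Expanding the left-hand side, the coefficient of $\pvv{t}^\ell$ is $\sum_{J \subseteq \supp(\ell)} (-1)^{|J|} \mFh^C(\ell - e^J)$, where $\supp(\ell) := \{i : \ell_i \ge 1\}$ (the sum is naturally restricted here since $\mFh^C$ is only defined on $\bN^r$). On the right-hand side, each summand $(-1)^{|J|-1} (\prod_{i \in J} t_i) P_{C_J}(\pvv{t}_J)$ involves only variables indexed by $J$ and carries the common prefactor $\prod_{i \in J} t_i$; for it to contribute to $\pvv{t}^\ell$ we need both $\supp(\ell) \subseteq J$ (so variables outside $J$ have exponent zero) and $J \subseteq \supp(\ell)$ (so the prefactor is absorbed without introducing extra variables). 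Only the single term $J = S := \supp(\ell)$ therefore contributes, yielding $(-1)^{|S|-1} \mFp^{C_S}(\ell^S - \pvv{1})$, where $\ell^S$ is the restriction of $\ell$ to coordinates in $S$ and $\pvv{1}$ is the all-ones vector in $\bN^S$. (For $\ell = 0$ no summand contributes, consistent with $\mFh^C(0) = 0$.)

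To match the two coefficients I would unpack the defining inclusion-exclusion (\ref{eq:PO}) of $\mFp^{C_S}$ using the subcurve compatibility $\mFh^{C_S}(n) = \mFh^C(\iota_S(n))$ recalled in Subsection \ref{subsec:lattincl}. Observing that $\iota_S(\ell^S - \pvv{1} + e^K) = \ell - e^{S \setminus K}$ in $\bN^r$ for each $K \subseteq S$ (the coordinates outside $S$ vanish because $\ell_i = 0$ there), this gives
$$\mFp^{C_S}(\ell^S - \pvv{1}) = \sum_{K \subseteq S} (-1)^{|K|+1} \mFh^C(\ell - e^{S \setminus K}).$$
The substitution $J := S \setminus K$ rewrites the right-hand side as $(-1)^{|S|+1} \sum_{J \subseteq S} (-1)^{|J|} \mFh^C(\ell - e^J)$, so $(-1)^{|S|-1} \mFp^{C_S}(\ell^S - \pvv{1}) = \sum_{J \subseteq S} (-1)^{|J|} \mFh^C(\ell - e^J)$, exactly matching the LHS coefficient.

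The main obstacle is bookkeeping rather than conceptual: one must track the asymmetry that on the LHS many subsets $J \subseteq S$ contribute while only $J = S$ survives on the RHS, and correctly handle the sign change $(-1)^{|K|+1} = (-1)^{|S|-|J|+1}$ under the substitution $J = S \setminus K$. Once this is carried out the two inclusion-exclusions collapse into one another, essentially by M\"obius inversion on the Boolean lattice of subsets of $S$.
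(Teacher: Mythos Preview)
Your argument is correct. The coefficient comparison is carried out cleanly: on the left you correctly restrict to $J\subseteq\supp(\ell)$, on the right you correctly argue that only $J=\supp(\ell)$ contributes, and the unpacking of $\mFp^{C_S}$ via (\ref{eq:PO}) together with the subcurve compatibility of Subsection~\ref{subsec:lattincl} and the substitution $J=S\setminus K$ collapses both sides to the same alternating sum. The $\ell=0$ boundary case is handled.

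Note, however, that the paper does not give its own proof of this statement; it is quoted from \cite[Theorem 3.4.3]{GorNem2015} (see also \cite[Corollary 4.3]{julioproj}) and used as input. So there is nothing to compare against here. Your self-contained verification by M\"obius inversion on the Boolean lattice of $\supp(\ell)$ is a perfectly good direct proof, and indeed makes transparent that the identity is purely combinatorial once one has the subcurve compatibility $\mFh^{C_J}=\mFh^C\circ\iota_J$.
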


\begin{example} \label{ex:hilbert}
    If $r = 1$, then (writing $t$ for $t_1$) we have $H(t)= tP(t)/(1-t)$. If $r=2$, then $$H_C(t_1, t_2) = \frac{1}{(1-t_1)(1-t_2)} \Big(t_1 P_{C_1}(t_1) + t_2 P_{C_2}(t_2) - t_1 t_2 P_C(t_1, t_2)\Big).$$ Hence, if we write $e = (1, 1)$ as in Section \ref{sec:setup}, we have, for any $\ell_1, \ell_2 \in \bN$, $$\mFh^C(\ell_1, \ell_2) = \mFh^{C_1}(\ell_1) + \mFh^{C_2}(\ell_2) - \#\{p \in \supp P_C \mid \ell \ge p + e\},$$ since $P_C(t_1, t_2) = \sum_{(p_1, p_2) \in \supp P_C} {t_1}^{p_1}{t_2}^{p_2}$. Thus, for $\ell_1, \ell_2 \in \bN$, $$w_0^C(\ell_1, \ell_2) = w_0^{C_1}(\ell_1) + w_0^{C_2}(\ell_2) - 2 \cdot \#\{p \in \supp P_C \mid \ell \ge p + e\}.$$
\end{example}
On the other hand, for a reduced complex-analytic \emph{plane} curve germ, as we study in this section, the Poincar\'{e} series is determined by the multivariable Alexander polynomial $\Delta(\pvv{t})$ of the link of 
$(C,o) \subset (\bC^2,o)$. Indeed, $P(t)(1-t) = \Delta(t)$ if $r = 1$ and $P(\pvv{t}) = \Delta(\pvv{t})$ for $r > 1$ (\cite{cdg2,cdg}); in particular, this implies that the series $P(\pvv{t})$ is a polynomial for $r > 1$. Using combinatorial formulae for Alexander polynomials in terms of the embedded resolution graphs (or splice diagrams) of plane curve singularities (\cite[Section 12]{EN}), we can now compute the Poincar\'{e} series of all subgerms; this is how we obtain the Hilbert functions $\mFh$ for the curves considered in this section, as well as, if we wish, the semigroups $\cS$ by a further application of (\ref{eq:Sfromh}).

It follows from our discussion here that the Hilbert function of a reduced plane curve germ is an invariant of the embedded topological type, and hence the weight function $w_0$ is as well. For completeness, we note for such germs that the multivariable Alexander polynomial---and hence likewise the multivariable Poincar\'{e} series---is a \emph{complete} invariant for the embedded topological type (\cite{Yamamoto}); in particular, $H_C(\pvv{t})$ is determined by $P_C(\pvv{t})$ alone in this case, although we do not give an explicit formula.

We now list the results of the weight computations for the $ADE$ and $T_{pq}$ germs $(C, o)$, using the notation of Section \ref{sec:setup}; since these are hypersurface germs, we also adopt the convention that $f$ denotes the defining equation (in terms of coordinates $x$ and $y$).

\subsection{Invariants of germs $A_n$, $n \ge 0$ even}
\label{subsec:aneven}

Here $f = x^{n+1}+y^2$ is irreducible and so $r=1$. Moreover, we can compute $c = n$, $\delta = n/2$, and $\cS = \langle 2, n+1\rangle$. The function $w_0^C$ is then given by the following, where we {\bf bold} the values of semigroup elements and \fbox{box} the conductor:
$$\small\begin{array}{c|ccccccccccc}
    \ell & 0 & 1 & 2 & 3 & 4 & \cdots & n-1 & n & n+1 & n+2 & \cdots \\ \hline
    w_0 & \bf 0 & 1 & \bf 0 & 1 & \bf 0 & \cdots & 1 & \fbox{\bf 0} & \bf 1 & \bf 2 & \cdots
\end{array}$$

\subsection{Invariants of germs $A_n$, $n = 2k-1 \ge 1$}
\label{subsec:anodd}

Here $f = x^{n+1} + y^2 = x^{2k} + y^2 = (x^k + iy)(x^k - iy)$, so $r = 2$. We use the formulae from Example \ref{ex:hilbert}; by \cite[Section 12]{EN}, $P_i(t_i)=1/(1-t_i)$ for $i \in \{1, 2\}$ (since $(C_i,o)$ is smooth) and $P_C(t_1, t_2) = 1 + t_1t_2 + \cdots + (t_1t_2)^{k-1}$. Hence, for $\ell \in R(0, c)$, we have $w_0^C(\ell) = |\ell_1 - \ell_2|$, where $c = (k, k)$. We illustrate this concretely by giving the $w_0^C(\ell)$-values for $\ell \in R(0, (3,3))$ in the cases $k = 1$ and $k = 2$:
$$\footnotesize \phantom{+}^{k\,=\,1}\ \begin{array}{c|cccc}
    & 3 & \bf 2 & \bf 3 & \bf 4 \\
    & 2 & \bf 1 & \bf 2 & \bf 3 \\
    & 1 & \fbox{\bf 0} & \bf 1 & \bf 2 \\
    & \bf 0 & 1 & 2 & 3 \\ \hline
    \wtaxislabels{\ell_1}{\ell_2} & & & &
\end{array} \hspace{0.2\linewidth} \phantom{+}^{k\,=\,2}\ \begin{array}{c|cccc}
    & 3 & 2 & \bf 1 & \bf 2 \\
    & 2 & 1 & \fbox{\bf 0} & \bf 1 \\
    & 1 & \bf 0 & 1 & 2 \\
    & \bf 0 & 1 & 2 & 3 \\ \hline
    \wtaxislabels{\ell_1}{\ell_2} & & & &
\end{array}$$

\subsection{Invariants of germs $D_n$, $n \ge 5$ odd}
\label{subsec:dnodd}

In this case, $f = (x^2 - y^{n-2})y$, $r=2$, $m = (2, 1)$, $c=(n-1, 2)$, and $\delta = (n+1)/2$; the Poincar\'e series are $P_{C_1}(t_1) = (1 + {t_1}^{n-2})/(1 - {t_1}^2)$, $P_{C_2}(t_2) = 1/(1 - t_2)$, and $P_C(t_1, t_2) = 1 + {t_1}^{n-2}t_2$. Then the weights $w_0^C(\ell)$ for $\ell$ in $R(0, c)$ (left-hand side) and the weight-at-most-$0$ space $S_0^C$ (right-hand side) are as follows:
$$\footnotesize \begin{array}{c|ccccccccc}
    & 2 & 1 & \bf 0 & 1 & \bf 0 & \cdots & \bf 0 & 1 & \fbox{\bf 0} \\
    & 1 & 0 & \bf -1 & 0 & \bf -1 & \cdots & \bf -1 & \bf 0 & 1 \\
    & \bf 0 & 1 & 0 & 1 & 0 & \cdots & 0 & 1 & 2 \\ \hline
    \wtaxislabels{\ell_1}{\ell_2} & 0 & 1 & 2 & 3 & 4 & \cdots & & & \!\!\! n-1 \!\!\!
\end{array} \hspace{0.15\linewidth} \begin{picture}(200,30)(0, 15)
    \put(80,15){\makebox(0,0){$\ldots$}}
    \thicklines
    \put(0,15){\line(1,0){65}}
    \put(15,5){\line(0,1){20}}
    \put(45,5){\line(0,1){20}}
    \thinlines
    \put(-15,5){\circle*{3}} 
    \put(0,15){\circle*{3}} 
    \put(15,15){\circle*{3}} 
    \put(30,15){\circle*{3}} 
    \put(45,15){\circle*{3}} 
    \put(60,15){\circle*{3}} 
    \put(15,25){\circle*{3}} 
    \put(15,5){\circle*{3}} 
    \put(45,25){\circle*{3}} 
    \put(45,5){\circle*{3}} 
    \thicklines 
    \put(95,15){\line(1,0){65}}
    \put(115,5){\line(0,1){20}}
    \put(145,5){\line(0,1){20}}
    \thinlines
    
    \put(175,25){\circle*{3}} 
    \put(100,15){\circle*{3}} 
    \put(115,15){\circle*{3}} 
    \put(130,15){\circle*{3}} 
    \put(145,15){\circle*{3}} 
    \put(160,15){\circle*{3}} 
    \put(115,25){\circle*{3}} 
    \put(115,5){\circle*{3}} 
    \put(145,25){\circle*{3}} 
    \put(145,5){\circle*{3}} 
    
    \dashline[60]{1}(0,25)(30,5)
    \dashline[60]{1}(30,25)(60,5)
    \dashline[60]{1}(100,25)(130,5)
    \dashline[60]{1}(130,25)(160,5)
    \put(-5,30){\makebox(0,0){\tiny{$3$}}}
    \put(25,30){\makebox(0,0){\tiny{$5$}}}
    
    \thicklines
    \put(15,16){\line(1,0){15}}
    \put(16,15){\line(0,1){10}}
    
    \put(15,15.5){\line(1,0){15}}
    \put(15.5,15){\line(0,1){10}}
    \thinlines
\end{picture}$$
The dashed lines represent levels $d$ where $(E^1_{-d, 1+d})_0 \ne 0$. The `thick line wedge' represents a \emph{minimal} spectral 1-cycle of weight 0 generating $\mFM_{1,0} = (E^1_{-3, 4})_0 = H_1(S_0 \cap \mFX_{-3}, S_0 \cap \mFX_{-4}) = \bZ$ (corresponding to $d=3$).

\subsection{Invariants of germs $D_n$, $n = 2k \ge 4$}
\label{subsec:dneven}

In this case, $f = (x^2 - y^{n-2})y = (x - y^{k-1})(x + y^{k-1})y$, $r = 3$, $m = (1, 1, 1)$, $c = (k, k, 2)$. The Poincar\'e series are $P_{C_i}(t_i) = 1/(1 - t_i)$ for every $i \in \{1, 2, 3\}$, $P_{C_{\{1, 3\}}} = P_{C_{\{2, 3\}}} = 1$, $P_{C_{\{1, 2\}}}(t_1, t_2) = 1 + t_1t_2 + \cdots + (t_1t_2)^{k-2}$, and $P_C(t_1, t_2, t_3) = 1 - {t_1}^{k-1}{t_2}^{k-1}t_3$.

If $n = 4$, then the $w_0^C$-table in $R(0, (2, 2, 2)) = R(0, c)$ and the space $S_0$ are the following (where the dashed lines now indicate only relative positioning in three-dimensional space):
$$\footnotesize \substack{\begin{array}{c|ccc}
    & 2 & 1 & 2 \\
    & 1 & 0 & 1 \\
    & \bf 0 & 1 & 2 \\ \hline
    \wtaxislabels{\ell_1}{\ell_2} & & &
\end{array} \\[1pt] \ell_3\,=\,0} \hspace{.05\linewidth} \substack{\begin{array}{c|ccc}
    & 1 & 0 & 1 \\
    & 0 & \bf -1 & 0 \\
    & 1 & 0 & 1 \\ \hline
    \wtaxislabels{\ell_1}{\ell_2} & & &
\end{array} \\[1pt] \ell_3\,=\,1} \hspace{.05\linewidth} \substack{\begin{array}{c|ccc}
    & 2 & 1 & \fbox{\bf 0} \\
    & 1 & \bf 0 & 1 \\
    & 2 & 1 & 2 \\ \hline
\wtaxislabels{\ell_1}{\ell_2} & & &
\end{array} \\[1pt] \ell_3\,=\,2} \hspace{.15\linewidth} \begin{picture}(300,50)(100,15)
    \dashline[200]{1}(100,0)(120,0)\dashline[200]{1}(100,0)(100,20)\dashline[200]{1}(120,0)(120,20)
    \dashline[200]{1}(100,20)(120,20)\dashline[200]{1}(120,0)(130,10)\dashline[200]{1}(120,20)(130,30)
    \dashline[200]{1}(100,20)(110,30)\dashline[200]{1}(110,30)(130,30)
    \dashline[200]{1}(130,10)(130,30)
    
    \dashline[200]{1}(130,30)(150,30)\dashline[200]{1}(130,30)(130,50)\dashline[200]{1}(150,30)(150,50)
    \dashline[200]{1}(130,50)(150,50)\dashline[200]{1}(150,30)(160,40)\dashline[200]{1}(150,50)(160,60)
    \dashline[200]{1}(130,50)(140,60)\dashline[200]{1}(140,60)(160,60)
    \dashline[200]{1}(160,40)(160,60)
    \put(130,-10){\makebox(0,0){$S_{0}$}}
    \put(100,0){\circle*{3}}
    \put(160,60){\circle*{3}}
    \thicklines
    \put(110,30){\line(1,0){40}} \put(120,20){\line(1,1){20}} \put(130,10){\line(0,1){40}}
    \thinlines 
    
    \put(190,15){\makebox(0,0){$(E^1_{-3,4})_0=\bZ^2$}}
\end{picture}$$
For $n > 4$, the $w_0$-values in $R(0, c)$ are as follows. For $\ell_3 = 0$, $w_0(\ell) = |\ell_1 - \ell_2|$ for any $0 \le \ell < (k, k)$, while $w_0(k, k, 0) = 2$. The values for $\ell_3 = 2$ can be obtained from the values with $\ell_3 = 0$ since $w_0$ is symmetric in $R(0, c)$; that is, $w_0(\ell) = w_0(c - \ell)$ per (\ref{eq:GOR}). Finally, for $\ell_3 = 1$, we have $w_0(\ell_1, \ell_2, 1) = w_0(\ell_1, \ell_2, 0) + \epsilon$, where $\epsilon = 1$ for $(\ell_1, \ell_2) = (0, 0)$ and $\epsilon = -1$ otherwise.

Since we will later need some concrete values in the rectangle $R(0, (2, 2, 2))$, we provide them here together with $S_0 \cap R(0, (2, 2, 2))$ for the convenience of the reader.
$$\footnotesize \substack{\begin{array}{c|ccc}
    & 2 & 1 & 0 \\
    & 1 & 0 & 1 \\
    & \bf 0 & 1 & 2 \\ \hline
    \wtaxislabels{\ell_1}{\ell_2} & & &
\end{array} \\[1pt] \ell_3\,=\,0} \hspace{.05\linewidth} \substack{\begin{array}{c|ccc}
    & 1 & 0 & \bf -1 \\
    & 0 & \bf -1 & 0 \\
    & 1 & 0 & 1 \\ \hline
    \wtaxislabels{\ell_1}{\ell_2} & & &
\end{array} \\[1pt] \ell_3\,=\,1} \hspace{.05\linewidth} \substack{\begin{array}{c|ccc}
    & 2 & 1 & \bf 0 \\
    & 1 & \bf 0 & 1 \\
    & 2 & 1 & 2 \\ \hline
    \wtaxislabels{\ell_1}{\ell_2} & & &
\end{array} \\[1pt] \ell_3\,=\,2} \hspace{.15\linewidth} \begin{picture}(300,50)(100,15)
    \dashline[200]{1}(100,0)(120,0)\dashline[200]{1}(100,0)(100,20)\dashline[200]{1}(120,0)(120,20)
    \dashline[200]{1}(100,20)(120,20)\dashline[200]{1}(120,0)(130,10)\dashline[200]{1}(120,20)(130,30)
    \dashline[200]{1}(100,20)(110,30)\dashline[200]{1}(110,30)(130,30)
    \dashline[200]{1}(130,10)(130,30)
    
    \dashline[200]{1}(130,30)(150,30)\dashline[200]{1}(130,30)(130,50)\dashline[200]{1}(150,30)(150,50)
    \dashline[200]{1}(130,50)(150,50)\dashline[200]{1}(150,30)(160,40)\dashline[200]{1}(150,50)(160,60)
    \dashline[200]{1}(130,50)(140,60)\dashline[200]{1}(140,60)(160,60)
    \dashline[200]{1}(160,40)(160,60)
    \dashline[200]{1}(140,40)(140,60)
    \put(130,-10){\makebox(0,0){$S_0 \cap R(0, (2, 2, 2))$}}
    \put(100,0){\circle*{3}}
    
    \thicklines  
    \put(110,30){\line(1,0){40}} \put(120,20){\line(1,1){20}} \put(130,10){\line(0,1){40}}
    \put(140,40){\line(1,0){20}}
    \put(139,39){\line(1,0){20}}
    \put(138,38){\line(1,0){20}}
    \put(137,37){\line(1,0){20}}
    \put(136,36){\line(1,0){20}}
    \put(135,35){\line(1,0){20}}
    \put(134,34){\line(1,0){20}}
    \put(133,33){\line(1,0){20}}
    \put(132,32){\line(1,0){20}}
    \put(131,31){\line(1,0){20}}
    \put(160,40){\line(-1,-1){10}}
    \put(160,20){\line(0,1){40}}
    \thinlines
    
    \put(200,15){\makebox(0,0){$(E^1_{-3,4})_0=\bZ$}}
\end{picture}$$

Note that the $D_4$ is distinguished from the $D_n$ singularities for all $n \ge 5$ by the rank of $\mFM_{1,0} = (E_{-3,4}^1)_0$.

\subsection{Invariants of germs $E_n$, $n \in \{6, 7, 8\}$}
\label{subsec:egerms}

The $E_6$ and $E_8$ cases, where $r = 1$, can be done easily, similarly to the case $A_{2k}$ of Subsection \ref{subsec:aneven}, using $\cS^{E_6} = \langle 3, 4\rangle$ and $\cS^{E_8} = \langle 3, 5\rangle$:
$$\small \phantom{+}^{E_6}\ \begin{array}{c|ccccccc}
    \ell & 0 & 1 & 2 & 3 & 4 & 5 & 6 \\ \hline
    w_0 & \bf 0 & 1 & 0 & \bf -1 & \bf 0 & 1 & \fbox{\bf 0}
\end{array} \hspace{.1\linewidth} \phantom{+}^{E_8}\ \begin{array}{c|ccccccccc}
\ell & 0 & 1 & 2 & 3 & 4 & 5 & 6 & 7 & 8 \\ \hline
w_0 & \bf 0 & 1 & 0 & \bf -1 & 0 & \bf -1 & \bf 0 & 1 & \fbox{\bf 0}
\end{array}$$
For $E_7$, $f = (x^2 + y^3)x$, $r = 2$, $m = (2, 1)$, $c = (5, 3)$, and $\delta = 4$. The values of $w_0^{E_7}$ in $R(0, c)$ and the space $S_0$ are the following:
$$\footnotesize \begin{array}{c|cccccc}
    & 3 & 2 & 1 & \bf 0 & 1 & \fbox{\bf 0} \\
    & 2 & 1 & 0 & \bf -1 & \bf 0 & 1 \\
    & 1 & 0 & \bf -1 & 0 & 1 & 2 \\
    & \bf 0 & 1 & 0 & 1 & 2 & 3 \\ \hline
    \wtaxislabels{\ell_1}{\ell_2} & & & & & &
\end{array} \hspace{.2\linewidth} \begin{picture}(200,40)(150,0)
    \put(150,15){\line(1,0){30}}
    \put(165,5){\line(0,1){20}}
    \put(165,25){\line(1,0){30}}
    \put(180,15){\line(0,1){20}}
    \put(210,35){\circle*{3}} 
    \put(180,25){\circle*{3}} 
    \put(180,35){\circle*{3}} 
    \put(195,25){\circle*{3}} 
    \put(165,5){\circle*{3}} 
    
    \put(135,5){\circle*{3}} 
    \put(150,15){\circle*{3}} 
    \put(165,15){\circle*{3}} 
    \put(180,15){\circle*{3}}
    \put(165,25){\circle*{3}} 
    \thicklines 
    \put(165,16){\line(1,0){15}}\put(165,15){\line(1,0){15}}
    \put(165,17){\line(1,0){15}}
    \put(165,18){\line(1,0){15}}
    \put(165,19){\line(1,0){15}}
    \put(165,20){\line(1,0){15}}
    \put(165,21){\line(1,0){15}}
    \put(165,22){\line(1,0){15}}
    \put(165,23){\line(1,0){15}}
    \put(165,24){\line(1,0){15}}\put(165,25){\line(1,0){15}}
    \thinlines 
    
    \dashline[60]{1}(150,25)(180,5)
    \put(145,30){\makebox(0,0){\tiny{$3$}}}
    
    \put(150,-10){\makebox(0,0){$S_{0}$}}
    
    \put(265,15){\makebox(0,0){$(E^1_{-3,4})_0=0$}}
\end{picture}$$

\subsection{Invariants of $T_{4,4}$}
\label{subsec:t44}

Since the weight function depends only on the embedded topological type of the plane curve singularity, we can take $f = x^4 + y^4 = (x - y)(x + y)(x - iy)(x + iy)$; we then have $r = 4$, $m = (1, 1, 1, 1)$, and $c = (3, 3, 3, 3)$. The Poincar\'e series are $P_{C_i}(t_i) = 1/(1 - t_i)$ for $1 \le i \le 4$, $P_{C_{\{i, j\}}}(t_i, t_j) = 1$ for $1 \le i < j \le 4$, $P_{C_{\{i, j, k\}}}(t_i, t_j, t_k) = 1 - t_it_jt_k$ for $1 \le i < j < k \le 4$, and $P_C(t_1, t_2, t_3, t_4) = (1 - t_1t_2t_3t_4)^2$. Using these equations, or simply the fact that the union of any three branches is the $D_4$ singularity of Subsection \ref{subsec:dneven} together with standard results like (\ref{eq:wtdiff}), (\ref{eq:multw}), and the symmetry (\ref{eq:GOR}), we find that the $w_0$-values in $R(0, c)$ are as follows:
$$\tiny\begin{array}{cccc}
    \substack{\begin{array}{c|cccc}
        & 4 & 3 & 2 & 3 \\
        & 3 & 2 & 1 & 2 \\
        & 2 & 1 & 2 & 3 \\
        & 3 & 2 & 3 & 4 \\ \hline
        \wtaxislabels{\ell_1}{\ell_2} & & & & \\
    \end{array} \\[1pt] (\ell_3, \ell_4)\,=\,(0, 3)} &
    \substack{\begin{array}{c|cccc}
        & 3 & 2 & 1 & 2 \\
        & 2 & 1 & 0 & 1 \\
        & 1 & \bf 0 & 1 & 2 \\
        & 2 & 1 & 2 & 3 \\ \hline
        \wtaxislabels{\ell_1}{\ell_2} & & & & \\
    \end{array} \\[1pt] (\ell_3, \ell_4)\,=\,(1, 3)} &
    \substack{\begin{array}{c|cccc}
        & 2 & 1 & \bf 0 & 1 \\
        & 1 & 0 & \bf -1 & \bf 0 \\
        & 2 & 1 & 0 & 1 \\
        & 3 & 2 & 1 & 2 \\ \hline
        \wtaxislabels{\ell_1}{\ell_2} & & & & \\
    \end{array} \\[1pt] (\ell_3, \ell_4)\,=\,(2, 3)} &
    \substack{\begin{array}{c|cccc}
        & 3 & 2 & 1 & \fbox{\bf 0} \\
        & 2 & 1 & \bf 0 & 1 \\
        & 3 & 2 & 1 & 2 \\
        & 4 & 3 & 2 & 3 \\ \hline
        \wtaxislabels{\ell_1}{\ell_2} & & & & \\
    \end{array} \\[1pt] (\ell_3, \ell_4)\,=\,(3, 3)} \\[32pt]
    \substack{\begin{array}{c|cccc}
        & 3 & 2 & 1 & 2 \\
        & 2 & 1 & 0 & 1 \\
        & 1 & 0 & 1 & 2 \\
        & 2 & 1 & 2 & 3 \\ \hline
        \wtaxislabels{\ell_1}{\ell_2} & & & & \\
    \end{array} \\[1pt] (\ell_3, \ell_4)\,=\,(0, 2)} &
    \substack{\begin{array}{c|cccc}
        & 2 & 1 & 0 & 1 \\
        & 1 & 0 & -1 & 0 \\
        & 0 & \bf -1 & 0 & 1 \\
        & 1 & 0 & 1 & 2 \\ \hline
        \wtaxislabels{\ell_1}{\ell_2} & & & & \\
    \end{array} \\[1pt] (\ell_3, \ell_4)\,=\,(1, 2)} &
    \substack{\begin{array}{c|cccc}
        & 1 & 0 & \bf -1 & \bf 0 \\
        & 0 & -1 & \bf -2 & \bf -1 \\
        & 1 & 0 & -1 & 0 \\
        & 2 & 1 & 0 & 1 \\ \hline
        \wtaxislabels{\ell_1}{\ell_2} & & & & \\
    \end{array} \\[1pt] (\ell_3, \ell_4)\,=\,(2, 2)} &
    \substack{\begin{array}{c|ccccc}
        & 2 & 1 & \bf 0 &  1 \\
        & 1 & 0 & \bf -1 & \bf 0 \\
        & 2 & 1 & 0 & 1 \\
        & 3 & 2 & 1 & 2 \\ \hline
        \wtaxislabels{\ell_1}{\ell_2} & & & & & \\
    \end{array} \\[1pt] (\ell_3, \ell_4)\,=\,(3, 2)} \\[32pt]
    \substack{\begin{array}{c|cccc}
        & 2 & 1 & 2 & 3 \\
        & 1 & 0 & 1 & 2 \\
        & 0 & -1 & 0 & 1 \\
        & 1 & 0 & 1 & 2 \\ \hline
        \wtaxislabels{\ell_1}{\ell_2} & & & & \\
    \end{array} \\[1pt] (\ell_3, \ell_4)\,=\,(0, 1)} &
    \substack{\begin{array}{c|cccc}
        & 1 & \bf 0 & 1 & 2 \\
        & 0 & \bf -1 & 0 & 1 \\
        & -1 & \bf -2 & \bf -1 &  \bf 0 \\
        & 0 & -1 & 0 & 1 \\ \hline
        \wtaxislabels{\ell_1}{\ell_2} & & & & \\
    \end{array} \\[1pt] (\ell_3, \ell_4)\,=\,(1, 1)} &
    \substack{\begin{array}{c|cccc}
        & 2 & 1 & 0 & 1 \\
        & 1 & 0 & -1 & 0 \\
        & 0 & \bf -1 & 0 & 1 \\
        & 1 & 0 & 1 & 2 \\ \hline
        \wtaxislabels{\ell_1}{\ell_2} & & & & \\
    \end{array} \\[1pt] (\ell_3, \ell_4)\,=\,(2, 1)} &
    \substack{\begin{array}{c|cccc}
        & 3 & 2 & 1 & 2 \\
        & 2 & 1 & 0 & 1 \\
        & 1 & \bf 0 & 1 & 2 \\
        & 2 & 1 & 2 & 3 \\ \hline
        \wtaxislabels{\ell_1}{\ell_2} & & & & \\
    \end{array} \\[1pt] (\ell_3, \ell_4)\,=\,(3, 1)} \\[32pt]
    \substack{\begin{array}{c|cccc}
        & 3 & 2 & 3 & 4 \\
        & 2 & 1 & 2 & 3 \\
        & 1 & 0 & 1 & 2 \\
        & \bf 0 & 1 & 2 & 3 \\ \hline
        \wtaxislabels{\ell_1}{\ell_2} & & & & \\
    \end{array} \\[1pt] (\ell_3, \ell_4)\,=\,(0, 0)} &
    \substack{\begin{array}{c|cccc}
        & 2 & 1 & 2 & 3 \\
        & 1 & 0 & 1 & 2 \\
        & 0 & -1 & 0 & 1 \\
        & 1 & 0 & 1 & 2 \\ \hline
        \wtaxislabels{\ell_1}{\ell_2} & & & & \\
    \end{array} \\[1pt] (\ell_3, \ell_4)\,=\,(1, 0)} &
    \substack{\begin{array}{c|cccc}
        & 3 & 2 & 1 & 2 \\
        & 2 & 1 & 0 & 1 \\
        & 1 & 0 & 1 & 2 \\
        & 2 & 1 & 2 & 3 \\ \hline
        \wtaxislabels{\ell_1}{\ell_2} & & & & \\
    \end{array} \\[1pt] (\ell_3, \ell_4)\,=\,(2, 0)} &
    \substack{\begin{array}{c|cccc}
        & 4 & 3 & 2 & 3 \\
        & 3 & 2 & 1 & 2 \\
        & 2 & 1 & 2 & 3 \\
        & 3 & 2 & 3 & 4 \\ \hline
        \wtaxislabels{\ell_1}{\ell_2} & & & & \\
    \end{array} \\[1pt] (\ell_3, \ell_4)\,=\,(3, 0)}
\end{array}$$
We omit an illustration of the space $S_{-1}$ for dimensional reasons, but one can see from the table that $\mFM_{1,-1} = (E^1_{-4,5})_2 = \bZ^3$.

\subsection{Invariants of $T_{3,6}$}
\label{subsec:t36}

We can again choose the most convenient value of the unimodality parameter $\lambda$, so we take $f = x^3 + y^6$; thus $r = 3$, $m = (1, 1, 1)$, and $c = (4, 4, 4)$, with Poincar\'e series $P_{C_i}(t_i) = 1/(1 - t_i)$ for $1 \le i \le 3$, $P_{C_{\{i, j\}}}(t_i, t_j) = 1 + t_it_j$ for $1 \le i < j \le 3$, and $P_C(t_1, t_2, t_3) = (1 - {t_1}^2{t_2}^2{t_3}^2)^2/(1 - t_1t_2t_3)$. The $w_0$-values in $R(0, c)$ are:
\addtolength{\arraycolsep}{-2pt}
$$\tiny \begin{array}{ccccc}
    \substack{\begin{array}{c|ccccc}
        & 4 & 3 & 2 & 3 & 4 \\
        & 3 & 2 & 1 & 2 & 3 \\
        & 2 & 1 & 0 & 1 & 2 \\
        & 1 & 0 & 1 & 2 & 3 \\
        & \bf 0 & 1 & 2 & 3 & 4 \\ \hline
        \wtaxislabels{\ell_1}{\ell_2} & & & & &
    \end{array} \\[1pt] \ell_3\,=\,0} &
    \substack{\begin{array}{c|ccccc}
        & 3 & 2 & 1 & 2 & 3 \\
        & 2 & 1 & 0 & 1 & 2 \\
        & 1 & 0 & -1 & 0 & 1 \\
        & 0 & \bf -1 & 0 & 1 & 2 \\
        & 1 & 0 & 1 & 2 & 3 \\ \hline
        \wtaxislabels{\ell_1}{\ell_2} & & & & &
    \end{array} \\[1pt] \ell_3\,=\,1} &
    \substack{\begin{array}{c|ccccc}
        & 2 & 1 & \bf 0 & 1 & 2 \\
        & 1 & 0 & \bf -1 & 0 & 1 \\
        & 0 & -1 & \bf -2 & \bf -1 & \bf 0 \\
        & 1 & 0 & -1 & 0 & 1 \\
        & 2 & 1 & 0 & 1 & 2 \\ \hline
        \wtaxislabels{\ell_1}{\ell_2} & & & & &
    \end{array} \\[1pt] \ell_3\,=\,2} &
    \substack{\begin{array}{c|ccccc}
        & 3 & 2 & 1 & \bf 0 & 1 \\
        & 2 & 1 & 0 & \bf -1 & \bf 0 \\
        & 1 & 0 & \bf -1 & 0 & 1 \\
        & 2 & 1 & 0 & 1 & 2 \\
        & 3 & 2 & 1 & 2 & 3 \\ \hline
        \wtaxislabels{\ell_1}{\ell_2} & & & & &
    \end{array} \\[1pt] \ell_3\,=\,3} &
    \substack{\begin{array}{c|ccccc}
        & 4 & 3 & 2 & 1 & \fbox{\bf 0} \\
        & 3 & 2 & 1 & \bf 0 & 1 \\
        & 2 & 1 & \bf 0 & 1 & 2 \\
        & 3 & 2 & 1 & 2 & 3 \\
        & 4 & 3 & 2 & 3 & 4 \\ \hline
        \wtaxislabels{\ell_1}{\ell_2} & & & & &
    \end{array} \\[1pt] \ell_3\,=\,4}
\end{array}$$
\addtolength{\arraycolsep}{2pt}
The space $S_{-1} $ is as follows:

\begin{picture}(300,70)(0,0)
    \dashline[200]{1}(100,0)(120,0)\dashline[200]{1}(100,0)(100,20)\dashline[200]{1}(120,0)(120,20)
    \dashline[200]{1}(100,20)(120,20)\dashline[200]{1}(120,0)(130,10)\dashline[200]{1}(120,20)(130,30)
    \dashline[200]{1}(100,20)(110,30)\dashline[200]{1}(110,30)(130,30)
    \dashline[200]{1}(130,10)(130,30)
    
    \dashline[200]{1}(130,30)(150,30)\dashline[200]{1}(130,30)(130,50)\dashline[200]{1}(150,30)(150,50)
    \dashline[200]{1}(130,50)(150,50)\dashline[200]{1}(150,30)(160,40)\dashline[200]{1}(150,50)(160,60)
    \dashline[200]{1}(130,50)(140,60)\dashline[200]{1}(140,60)(160,60)
    \dashline[200]{1}(160,40)(160,60)
    \put(80,0){\makebox(0,0){\small{$(1,1,1)$}}} \put(180,60){\makebox(0,0){\small{$(3,3,3)$}}}
    \put(100,0){\circle*{3}}
    \put(160,60){\circle*{3}}
    \thicklines
    \put(110,30){\line(1,0){40}} \put(120,20){\line(1,1){20}} \put(130,10){\line(0,1){40}}
\thinlines 

\put(130,-10){\makebox(0,0){$S_{-1}$}}
\put(270,30){\makebox(0,0){\small{$\mFM_{1,-1} = (E^1_{-6,7})_2=\bZ^2$}}}
\end{picture}

\subsection{Invariants of $T_{3,2b+3}$, $b \ge 2$.}
\label{subsec:t3sok}

Since the weighted lattice is an invariant of the embedded topological type, we can take $f = (x^2 + y^{2b+1})(x + y^2)$. Here $r = 2$, $m = (2, 1)$, and $c = (2b + 4, 4)$; the Poincar\'e series are $P_{C_1}(t_1) = (1 + {t_1}^{2b+1})/(1 - {t_1}^2)$, $P_{C_2}(t_2) = 1/(1 - t_2)$, and $P_C(t_1, t_2) = (1 + {t_1}^2t_2)(1+{t_1}^{2b+1}{t_2}^2)$. The $w_0$-values in $R(0, c)$ and (the leftmost part of) $S_{-1}$ are as follows:
$$\footnotesize \begin{array}{c|ccccccccccccc}
    & 4 & 3 & 2 & 1 & \bf 0 & 1 & \bf 0 & \cdots & \bf 0 & 1 & \bf 0 & 1 & \fbox{\bf 0} \\
    & 3 & 2 & 1 & 0 & \bf -1 & 0 & \bf -1 & \cdots & \bf -1 & 0 & \bf -1 & \bf 0 & 1 \\
    & 2 & 1 & 0 & -1 & \bf -2 & -1 & \bf -2 & \cdots & \bf -2 & \bf -1 & 0 & 1 & 2 \\
    & 1 & 0 & \bf -1 & 0 & -1 & 0 & -1 & \cdots & -1 & 0 & 1 & 2 & 3 \\
    & \bf 0 & 1 & 0 & 1 & 0 & 1 & 0 & \cdots & 0 & 1 & 2 & 3 & 4 \\ \hline
    \wtaxislabels{\ell_1}{\ell_2} & 0 & 1 & 2 & 3 & 4 & 5 & 6 & \cdots & 2b & & & &
\end{array} \hspace{.05\linewidth} \begin{picture}(320,40)(265,20)
    \put(360,25){\makebox(0,0){$\ldots$}}
    
    \thicklines
    \put(280,25){\line(1,0){70}}
    \put(295,15){\line(0,1){20}}
    \put(325,15){\line(0,1){20}}
    \thinlines
    \put(265,15){\circle*{3}} 
    \put(280,25){\circle*{3}} 
    \put(295,25){\circle*{3}} 
    \put(310,25){\circle*{3}} 
    \put(325,25){\circle*{3}} 
    \put(340,25){\circle*{3}} 
    \put(295,35){\circle*{3}} 
    \put(295,15){\circle*{3}} 
    \put(325,35){\circle*{3}} 
    \put(325,15){\circle*{3}}
    
    \dashline[60]{1}(280,35)(310,15)
    \dashline[60]{1}(310,35)(340,15)
    \put(275,40){\makebox(0,0){\tiny{$6$}}}
    
    \thicklines
    \put(295,26){\line(1,0){15}}
    \put(296,25){\line(0,1){10}}
    
    \put(295,25.5){\line(1,0){15}}
    \put(295.5,25){\line(0,1){10}}
    \thinlines
    
    \put(300,0){\makebox(0,0){$S_{-1}$}}
    
    \put(400,5){\makebox(0,0){$(E^1_{-6,7})_2=\bZ$}}
\end{picture}$$

\subsection{Invariants of $T_{2a+3,2b+3}$, $a,b \ge 1$.}
\label{subsec:Tpqodd}

By again using the invariance with respect to embedded topological type, we can take $f = (x^{2a+1} + y^2)(x^2 + y^{2b+1})$. Then $r = 2$, $m = (2, 2)$, and $c = (2a + 4, 2b + 4)$, with Poincar\'e series $P_{C_1}(t_1) = (1 + {t_1}^{2a+1})/(1 - {t_1}^2)$, $P_{C_2}(t_2) = (1 + {t_2}^{2b+1})/(1 - {t_2}^2)$, and $P_C(t_1, t_2) = (1 + {t_1}^{2a+1}{t_2}^2)(1 + {t_1}^2t_2^{2b+1})$. The complete $w_0$-table is similar to the previous case; we give the bottom-left corners of this and the space $S_{-1}$ for sufficiently large $a, b$:
$$\footnotesize \begin{array}{c|cccccc}
    & \vdots & \vdots & \vdots & \vdots & \vdots & \reflectbox{$\ddots$} \\
    & 0 & -1 & \bf -2 & -1 & \bf -2 & \cdots \\
    & 1 & 0 & -1 & 0 & -1 & \cdots \\
    & 0 & -1 & \bf -2 & -1 & \bf -2 & \cdots \\
    & 1 & 0 & -1 & 0 & -1 & \cdots \\
    & \bf 0 & 1 & 0 & 1 & 0 & \cdots \\ \hline
    \wtaxislabels{\ell_1}{\ell_2} & & & & & &
\end{array} \hspace{.2\linewidth} \begin{picture}(200,40)(180,30)
    \put(240,35){\makebox(0,0){$\ldots$}}
    \put(210,60){\makebox(0,0){$\vdots$}}
    \put(240,60){\makebox(0,0){\reflectbox{$\ddots$}}}
    
    \thicklines
    \put(180,25){\line(1,0){50}}\put(180,45){\line(1,0){50}}
    \put(195,15){\line(0,1){35}}
    \put(225,15){\line(0,1){35}}
    \thinlines
    \put(180,25){\circle*{3}} 
    \put(195,25){\circle*{3}} \put(195,45){\circle*{3}}\put(210,45){\circle*{3}}
    \put(225,45){\circle*{3}}\put(180,45){\circle*{3}}
    \put(210,25){\circle*{3}} 
    \put(225,25){\circle*{3}} 
    \put(195,35){\circle*{3}} 
    \put(195,15){\circle*{3}} 
    \put(225,35){\circle*{3}} 
    \put(225,15){\circle*{3}}
    
    \dashline[60]{1}(180,35)(210,15)
    \put(173,35){\makebox(0,0){\tiny{$4$}}}
    
    \thicklines
    \put(195,26){\line(1,0){15}}
    \put(196,25){\line(0,1){10}}
    
    \put(195,25.5){\line(1,0){15}}
    \put(195.5,25){\line(0,1){10}}
    \thinlines
    
    \put(310,25){\makebox(0,0){$(E^1_{-4,5})_2=\bZ$}}
    \put(200,0){\makebox(0,0){$S_{-1}$}}
\end{picture}$$

Note that here $\bH_1(C,o) = \bZ^{ab-2}$ for $ab \ge 2$ and so, in particular, it is nonzero for $ab > 2$.

\subsection{Minimum weight for parabolic and hyperbolic germs}
\label{subsec:minT}

So far, we have given some example weight computations for $T_{pq}$ germs, from which we can see that $\min w_0 = -2$ in the cases considered. This fact will be our main takeaway from these computations in the sequel and so, in lieu of giving an exhaustive survey of the $T_{pq}$ family, we here prove that the corresponding lower bound holds in general; equality will follow by Theorem \ref{thm:finite} and the fact that the $T_{pq}$ germs are of infinite Cohen-Macaulay type.

\begin{prop} \label{prop:minT}
    Let $p \le q$ be integers such that $1/p + 1/q \le 1/2$. Then $\min w_0^{T_{pq}} \ge -2$.
\end{prop}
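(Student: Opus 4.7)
The plan is to verify the lower bound $\min w_0^{T_{pq}} \ge -2$ by extending the explicit computations of Subsections \ref{subsec:t44}--\ref{subsec:Tpqodd} to the remaining parity combinations of $(p, q)$. I first observe that the multiplicity of $T_{pq}$ at the origin is $|m(T_{pq})| = \min(p, 4) \in \{3, 4\}$, splitting the analysis into the regimes $|m| = 3$ (forcing $p = 3$, $q \ge 6$) and $|m| = 4$ ($p \ge 4$). Within each regime, the parities of $p$ and $q$ determine the number of branches $r$ and their individual multiplicities, once one fixes a convenient representative of the \kdash equivalence class of $T_{pq}$---permissible because $w_0$ depends only on the \kdash class, as recalled in Subsection \ref{subsec:formul}.

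In each subcase, I would compute the multivariable Alexander polynomial of the link of $T_{pq}$ from the embedded resolution graph, identify it with the Poincar\'{e} series $P_C(\pvv{t})$, apply Theorem \ref{thm:reconst} to reconstruct the Hilbert function $\mFh$, and finally read off $w_0(\ell) = 2\mFh(\ell) - |\ell|$ on the conductor rectangle $R(0, c)$. Alternatively, one can parametrize the branches directly, deduce the semigroup $\cS$ from the intersection multiplicities, and build $w_0$ recursively via (\ref{eq:wtdiff}). In either route, the Gorenstein symmetry (\ref{eq:GOR}) halves the work, as the minimum is realized in the lower-left portion of $R(0, c)$.

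Families still requiring explicit verification include $T_{3, q}$ for $q \ge 8$ even, $T_{4, q}$ for $q \ge 5$, and $T_{pq}$ for $p \ge 5$ with at least one of $p, q$ even. In each, the expected pattern of non-semigroup elements in $R(0, c)$ near its center produces a minimum weight of exactly $-2$, in direct analogy to the arrangements observed in Subsections \ref{subsec:dneven} and \ref{subsec:t3sok}.

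The principal obstacle is the sheer volume of parity cases if the argument is pursued by exhaustive enumeration; the paper's preceding remark about proving the bound \emph{in lieu} of a complete survey suggests that the authors employ a more uniform argument. A plausible such unification is a birational dominance argument: show that every $T_{pq}$ birationally dominates at least one of the four already computed families $T_{4,4}$, $T_{3, 6}$, $T_{3, 2b+3}$, or $T_{2a+3, 2b+3}$ with matching number of branches, whence the bound would follow from Lemma \ref{lem:FEAT}(g). The main difficulty in this cleaner approach is exhibiting the required dominance maps uniformly, which amounts to constructing explicit ring inclusions between carefully chosen \kdash equivalence class representatives whose parities align with one of the four computed families.
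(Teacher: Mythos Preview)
Your exhaustive-computation plan would work in principle, and your secondary dominance suggestion is a reasonable guess at a shortcut, but the paper follows neither route. Its uniform argument rests on a \emph{tangent-cone decomposition} into $A$-type subcurves. For $p > 4$ the tangent cone is cut out by $x^2y^2$, so the branches of $(C,o)$ split into two subcurves $(C',o)$, $(C'',o)$ according to tangent direction, each of multiplicity $2$ and hence an $A$-germ by Proposition \ref{prop:domA}. For lattice points $\ell = (\ell',\ell'')$ in a suitable initial box one has $\cF^C(\ell) = \cF^{C'}(\ell') \cap \cF^{C''}(\ell'')$, whence $\mFh^C(\ell) = \mFh^{C'}(\ell') + \mFh^{C''}(\ell'') - H$ with $H = \dimlen \cO/(\cF^{C'}(\ell') + \cF^{C''}(\ell''))$; transversality of the two tangent directions forces $\cF^{C'}(\ell') + \cF^{C''}(\ell'') \supseteq \mFm$, so $H \le 1$ and $w_0^C(\ell) \ge w_0^{C'}(\ell') + w_0^{C''}(\ell'') - 2 \ge -2$ on that box. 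Gorenstein symmetry (\ref{eq:GOR}) together with the matroid inequality (\ref{eq:matroid}) then propagates the bound to all of $R(0,c)$. The cases $p = 4$ and $p = 3$ (with $q$ even, $q > 6$) are handled by the same template with three tangent lines or one, adjusting the subcurve decomposition and the bound on $H$ accordingly. This buys an essentially computation-free argument once the base cases of Subsections \ref{subsec:t44}--\ref{subsec:Tpqodd} are in hand; your enumeration trades that conceptual economy for concreteness, while your dominance idea would require exhibiting explicit ring inclusions among the $T_{pq}$ that the paper never constructs (though the remark following the proof offers yet a third route, via $\delta$-constant deformations).
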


\begin{proof}
    Let $(C, o)$ denote our $T_{pq}$ germ. Then, from the equations, we can see that the multiplicity $|m| = |m(C, o)|$ is $3$ if $p = 3$ and $4$ if $p \ge 4$; we will handle these cases separately. Note again by Theorem \ref{thm:EUcurves} that we need only verify the desired weight inequality for lattice points within the conductor rectangle.
    
    Suppose first that $p > 4$. Then, since $T_{pq}$ germs are of tame CM type (cf. Proposition \ref{prop:tamedom}), Conditions (O1a) and (O1b) of Proposition \ref{prop:tameoc} tell us that $(C, o)$ decomposes as a union of branches with multiplicities at most 2, and we can see from the equation (choosing $\lambda = 1$) that the tangent cone at the origin is cut out by $x^2y^2$, the underlying set of which is the union of the two coordinate axes. By considering the multiplicities of the branches of the tangent cone, we can see that, if we let $(C', o)$ be the subcurve which is the union of all branches with reduced tangent cone the $x$-axis and $(C'', o)$ the corresponding subcurve for the $y$-axis, $C = C' \cup C''$ and each subcurve has multiplicity 2 at the origin $o$. Thus, using our eventual Proposition \ref{prop:domA} and the description of the reduced tangent cone, we can see that $(C', o)$ (respectively, $(C'', o)$) is a singularity of type $A_{n'}$ for some $n' \ge 2$ (resp., $A_{n''}$ for some $n'' \ge 2$).
    
    In particular, using the explicit equations for $A$-type singularities, we can see that the vector $v$ of orders of vanishing of some power series $s$ equivalent modulo $\mFm^2$ to the coordinate function $y$ along the branches of $(C', o)$ is given by either $(n' + 1)$ (if $n'$ is even) or $((n' + 1)/2, (n' + 1)/2)$ (if $n'$ is odd); we have a similar vector $w$ and power series $t$ equivalent modulo $\mFm^2$ to $x$ for $C''$. Now, for any $\ell = (\ell', \ell'') \le (v, w)$ in the lattice of $(C, o)$, we have $\cF^C(\ell) = \cF^{C'}(\ell') \cap \cF^{C''}(\ell'')$, hence $\mFh^C(\ell) = \mFh^{C'}(\ell') + \mFh^{C''}(\ell'') - H$, where $H := \dim (\cO / (\cF^{C'}(\ell') + \cF^{C''}(\ell'')))$. However, $s \in \cF^{C'}(v) \subseteq \cF^{C'}(\ell')$ and, similarly, $t \in \cF^{C''}(\ell'')$; thus, $\cF^{C'}(\ell') + \cF^{C''}(\ell'') \supseteq \mFm$ and so $H \le 1$. Hence, $w_0^C(\ell) \ge w_0^{C'}(\ell') + w_0^{C''}(\ell'') - 2$, while $w_0^{C'}(\ell') \ge 0$  and $w_0^{C''}(\ell'') \ge 0$ since $(C', o)$ and $(C'', o)$ are $A$-germs (see Subsections \ref{subsec:aneven} and \ref{subsec:anodd}); it follows that $w_0(\ell) \ge -2$ for such $\ell$.
    
    Now, for any lattice point $0 \le \ell \le c$, observe by (\ref{eq:matroid}) and the duality (\ref{eq:GOR}) that $2w_0(\ell) = w_0(\ell) + w_0(c - \ell) \ge w_0(\min\{\ell, c - \ell\}) + w_0(\max\{\ell, c - \ell\}) = w_0(\min\{\ell, c - \ell\}) + w_0(c - \max\{\ell, c - \ell\}) = 2w_0(\min\{\ell, c - \ell\})$. Therefore, it suffices to verify that $c \le 2(v, w) + e$ in these cases, so that $\min\{\ell, c - \ell\} \le (v, w)$; this can be observed directly by considering the possible combinations of parities of $n'$ and $n''$ separately and using standard formulae for the conductor of a plane curve singularity in terms of the conductors and intersection multiplicities of its components.
    
    We have already verified the case $4 = p = q$ in Subsection \ref{subsec:t44}. The case $4 = p < q$ is similar to our work above, although now the reduced tangent cone is the union of three lines rather than two and we decompose our curve into an $A_1$ singularity and an $A_{\ge 2}$ singularity; accordingly, we obtain only $H \le 2$ in general and must use some reasoning about the behavior of $w_0^{A_1}$ to make up the difference. Among the $p = 3$ cases, we have already addressed the situations where $q = 6$ or $q$ is odd in Subsections \ref{subsec:t36} and \ref{subsec:t3sok} respectively. In the remaining cases, the reduced tangent cone will be a single line---nevertheless, we can make an argument along the same lines as for $4 = p < q$, where $H \le 2$ in general and we use the particular behavior of the weight function $w_0^{A_0}$ along the smooth branch to make up the difference.
\end{proof}

\begin{rem}
    In fact, one can prove (see, e.g., \cite{NS}) that, along $\delta$-constant deformations such as $C_{t, s} := \{(x^{2a+1} + tx^{2a} + y^2)(x^2 + sy^{2b} + y^{2b+1}) = 0\}$ (for $t, s \in(\bC,0)$), the lattice homologies $\bH_*(C_{t ,s}, o)$ stay stable and hence $\min w_0^{C_{t, s}}$ is constant too. Thus we can also prove the preceding result by computing some of the cases explicitly, as in Subsection \ref{subsec:Tpqodd}, and using deformations to establish the bound for the rest. 
\end{rem}

\section{Curves of finite Cohen-Macaulay type}
\label{sec:fcmt}

\subsection{}

In this section, we characterize finite-Cohen-Macaulay-type  curve germs in terms of the weight function $w_0$, arriving at a description which depends only on the usual analytic lattice homology. Throughout, we will let $(C, o)$ be a reduced complex-analytic curve germ and use the notations of Section \ref{sec:setup}.

\begin{prop}[First $w_0$-characterization of finite CM type]
\label{prop:fintypeptwts}
    The following conditions are equivalent:
    \begin{enumerate}[label=\normalfont(\arabic*)]
        \item $(C,o)$ is of finite Cohen-Macaulay type.
        \item $w_0(m) \ge -1$ and $w_0(2m) \ge 0$.
        \item $w_0(\ell) \ge -1$ for $\ell \in \{m, 2m\}$.
    \end{enumerate}
\end{prop}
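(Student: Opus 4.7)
The plan is to reduce everything directly to the overring characterization of finite CM type provided by Proposition \ref{prop:finiteoc}, namely the two conditions $|m| \le 3$ and $\dimlen (\mFm\ic{\cO} + \cO)/(\mFm^2\ic{\cO} + \cO) \le 1$.

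The central observation is that the weights of $m$ and $2m$ encode precisely these two quantities. First, applying (\ref{eq:multw}) at the extreme point $\ell = m$ gives $w_0(m) = 2 - |m|$, so $w_0(m) \ge -1$ is literally equivalent to $|m| \le 3$. Next, equation (\ref{eq:multw2}) reads $w_0(2m) = 2 - 2\dimlen (\mFm\ic{\cO} + \cO)/(\mFm^2\ic{\cO} + \cO)$, so $w_0(2m) \ge 0$ is literally equivalent to $\dimlen (\mFm\ic{\cO} + \cO)/(\mFm^2\ic{\cO} + \cO) \le 1$. Combining these two translations with Proposition \ref{prop:finiteoc} immediately gives the equivalence of (1) and (2).

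For the equivalence of (2) and (3), the implication (2) $\Rightarrow$ (3) is trivial. For the converse, I would invoke the parity property noted after (\ref{eq:wtdiff}): $w_0(\ell) \equiv |\ell| \pmod 2$ for every $\ell \in \bN^r$. Applied to $\ell = 2m$, this forces $w_0(2m)$ to be even, so the hypothesis $w_0(2m) \ge -1$ automatically upgrades to $w_0(2m) \ge 0$, yielding (2).

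I do not anticipate a genuine obstacle here, since the statement is essentially a direct dictionary between the overring conditions of Proposition \ref{prop:finiteoc} and the weight values at the two specific lattice points $m$ and $2m$, with the parity observation smoothing out the slight asymmetry in the statements of (2) and (3). The only mild subtlety worth recording carefully is that (\ref{eq:multw2}) requires the use of (\ref{eq:addwt}) together with the value $w_0(m) = 2 - |m|$, but this is already established in the text preceding the proposition.
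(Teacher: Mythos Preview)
Your proposal is correct and follows essentially the same approach as the paper: translate conditions (a) and (b) of Proposition \ref{prop:finiteoc} into the weight inequalities at $m$ and $2m$ via (\ref{eq:multw}) and (\ref{eq:multw2}), then use the parity $w_0(2m) \equiv |2m| \equiv 0 \pmod 2$ to pass between (2) and (3). The paper's argument is identical in content and nearly identical in wording.
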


\begin{proof}
    The proof is based on Proposition \ref{prop:finiteoc}:   the condition (1) is equivalent to the conjunction of the inequalities (a)  $\dimlen \ic{\cO}/\mFm\ic{\cO} \le 3$ and (b) $\dimlen (\mFm\ic{\cO} + \cO)/(\mFm^2\ic{\cO} + \cO) \le 1$. By (\ref{eq:multw}), $\dimlen \ic{\cO}/\mFm\ic{\cO} = |m| = 2 - w_0(m)$, and hence (a) is  equivalent to $w_0(m) \ge -1$. On the other hand, by  (\ref{eq:multw2}), (b) is equivalent to $w_0(2m) \ge 0$ and so (1)$\Leftrightarrow$(2) follows. To see (2)$\Leftrightarrow$(3), we note that $|2m|$ is even; therefore, by (\ref{eq:wtdiff}), $w_0(2m) \in 2\bZ$.
\end{proof}

The following formulation is more concise and gives a condition depending only on $\bH_*(C, o)$:

\begin{thm}[Second $w_0$-characterization of finite CM type]
\label{thm:fintypewtbd}
    $(C,o)$ is of finite Cohen-Macaulay type if and only if $\min w_0^C \ge -1$.
\end{thm}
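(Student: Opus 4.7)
The plan is to combine Proposition \ref{prop:fintypeptwts} with the dominance characterization of Proposition \ref{prop:finitedom} and the computations of Section \ref{sec:comput}. The equivalence will be established directionally as follows.

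For the forward ($\Leftarrow$) direction, I would argue that the hypothesis $\min w_0^C \ge -1$ implies both of the conditions of Proposition \ref{prop:fintypeptwts}(2). The inequality $w_0(m) \ge -1$ is immediate, and for $w_0(2m)$ I would observe that $|2m|$ is even, so by the parity statement following (\ref{eq:wtdiff}) we have $w_0(2m) \in 2\bZ$; combining this with $w_0(2m) \ge -1$ forces $w_0(2m) \ge 0$. Hence $(C,o)$ is of finite Cohen-Macaulay type.

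For the reverse ($\Rightarrow$) direction, I would invoke the dominance criterion: by Proposition \ref{prop:finitedom}, finite CM type guarantees that $(C,o)$ birationally dominates some simple germ $(D, o)$ of type $ADE$. By Lemma \ref{lem:FEAT}(c) the two lattices are canonically identified, and Lemma \ref{lem:FEAT}(g) gives $w_0^D(\ell) \le w_0^C(\ell)$ for every $\ell \in \bN^r$. Taking the minimum on the left and right yields
\[
\min w_0^C \;\ge\; \min w_0^D.
\]
It then suffices to check that $\min w_0^D \ge -1$ for each germ $D$ in the $ADE$ family. This is precisely what the case-by-case tabulations in Subsections \ref{subsec:aneven}--\ref{subsec:egerms} record: the $A_n$ germs ($n \ge 0$) have $\min w_0 = 0$, while the $D_n$ ($n \ge 4$) and $E_n$ ($n = 6, 7, 8$) germs have $\min w_0 = -1$. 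In every case the bound $\min w_0^D \ge -1$ holds, and the desired inequality for $C$ follows.

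The main conceptual point—rather than any real obstacle—is getting the direction of the dominance inequality straight: since finite CM type places $(C, o)$ \emph{above} some $ADE$ germ in the birational dominance order, the Hilbert function and weight function of $C$ dominate those of $D$ pointwise, so a \emph{lower} bound on $\min w_0^D$ produces a \emph{lower} bound on $\min w_0^C$ (not the reverse). Once this is parsed correctly, the proof is essentially a one-line reduction to the $ADE$ computations already in hand; no further analysis of the semigroup or the full weighted lattice of $C$ is required.
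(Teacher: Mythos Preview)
Your proposal is correct and matches the paper's approach: the $\Leftarrow$ direction is handled exactly as you describe via Proposition~\ref{prop:fintypeptwts}, and your $\Rightarrow$ argument via dominance and the $ADE$ computations is precisely the paper's ``First proof'' of that direction. The paper also supplies two alternative proofs of the $\Rightarrow$ direction---one purely algebraic from the overring conditions of Proposition~\ref{prop:finiteoc}, and one semigroup-theoretic showing directly that condition~(3) of Proposition~\ref{prop:fintypeptwts} forces $\min w_0 \ge -1$---but your route is the one the paper presents first and is entirely sufficient.
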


\begin{proof}
    By the implication (3)$\Rightarrow$(1) of Proposition \ref{prop:fintypeptwts}, the inequality $\min w_0 \ge -1$ implies that $(C, o)$ is of finite CM type. For the other direction, we provide three proofs, each from a different perspective; the first is based on the dominance relation, via our prior explicit computations and Proposition \ref{prop:finitedom}, while the second and third are based on Proposition \ref{prop:finiteoc} via algebraic and semigroup-theoretic arguments respectively.
    
    {\it First proof.} If $(C,o)$ is of finite Cohen-Macaulay type, then by Proposition \ref{prop:finitedom} it birationally dominates an $ADE$ germ. Since for any $ADE$ germ one has $\min w_0 \ge -1$ (see Section \ref{sec:comput}), Lemma \ref{lem:FEAT}(g) gives $\min w_0^C \ge -1$ as well.
    
    {\it Second proof.} As mentioned, we use Proposition \ref{prop:finiteoc}; the concept is that Condition (a) bounds the $\ic{\mFh}$-values of lattice points in $R(0, m)$, while Condition (b) provides a bound (in terms of $\mFh(\ell)$) for the change in $\ic{\mFh}$ from $\min\{\ell, m\}$ to $\ell$ for each $\ell \in \bN^r$. To this end, suppose that $(C, o)$ is of finite Cohen-Macaulay type and fix $\ell \in \bN^r$; since $w_0(0) = 0$, we can take $|\ell| > 0$. Then $w_0(\ell) = \mFh(\ell) - \ic{\mFh}(\ell) = \mFh(\ell) - (\ic{\mFh}(\ell) - \ic{\mFh}(\min\{\ell, m\})) - \ic{\mFh}(\min\{\ell, m\})$, so it is enough to show $\mFh(\ell) - (\ic{\mFh}(\ell) - \ic{\mFh}(\min\{\ell, m\})) \ge 1$ and $\ic{\mFh}(\min\{\ell, m\}) \le 2$.
    
    For the latter inequality, we note that $\ic{\cF}(\min\{\ell, m\}) = \ic{\cF}(\ell) + \mFm\ic{\cO}$ and so $\ic{\mFh}(\min\{\ell, m\}) = \dimlen (\ic{\cO}/(\ic{\cF}(\ell) + \mFm\ic{\cO} + \cO)) \le \dimlen (\ic{\cO}/(\mFm\ic{\cO} + \cO)) = \dimlen \ic{\cO}/\mFm\ic{\cO} - \dimlen ((\mFm\ic{\cO} + \cO)/\mFm\ic{\cO}) = \dimlen \ic{\cO}/\mFm\ic{\cO} - \dimlen \cO/\mFm = |m| - 1$; thus Proposition \ref{prop:finiteoc}(a) gives the desired result.
    
    For the former, we observe that $\ic{\mFh}(\ell) - \ic{\mFh}(\min\{\ell, m\}) = \dimlen ((\ic{\cF}(\ell) + \mFm\ic{\cO} + \cO)/(\ic{\cF}(\ell) + \cO))$; by the usual module isomorphism theorems, this is equal to $\dimlen \frac{\mFm\ic{\cO}/(\mFm\ic{\cO} \cap \cO)}{(\mFm\ic{\cO} \cap (\ic{\cF}(\ell) + \cO))/(\mFm\ic{\cO} \cap \cO)}$. By considering the natural surjection $\mFm\ic{\cO} \twoheadrightarrow \mFm(\ic{\cO}/\cO)$, we find that this can be written as $\dimlen \frac{\mFm(\ic{\cO}/\cO)}{\mFm(\ic{\cO}/\cO) \cap ((\ic{\cF}(\ell) + \cO))/\cO)}$, and similar reasoning lets us rewrite Condition (b) of Proposition \ref{prop:finiteoc} as $\dimlen \frac{\mFm(\ic{\cO}/\cO)}{\mFm^2(\ic{\cO}/\cO)} \le 1$. Then, by Nakayama's Lemma, $\mFm(\ic{\cO}/\cO) \cong \cO/J$ for some ideal $J \subseteq \cO$; hence $\mFm(\ic{\cO}/\cO) \cap ((\ic{\cF}(\ell) + \cO)/\cO)$ is identified with with $I(\cO/J)$ for some ideal $I \supseteq J$ of $\cO$, giving $\frac{\mFm(\ic{\cO}/\cO)}{\mFm(\ic{\cO}/\cO) \cap ((\ic{\cF}(\ell) + \cO)/\cO)} \cong \frac{\cO/J}{I(\cO/J)} \cong \cO/I$. Explicitly, if we let $M := \frac{\ic{\cO}/\cO}{(\ic{\cF}(\ell) + \cO)/\cO}$, so that $\mFm M = \frac{\mFm(\ic{\cO}/\cO)}{\mFm(\ic{\cO}/\cO) \cap ((\ic{\cF}(\ell) + \cO)/\cO)}$, we can write $I = \ann_\cO \mFm M = \ann_\cO \frac{\ic{\cF}(\ell) + \mFm\ic{\cO} + \cO}{\ic{\cF}(\ell) + \cO}$; note in particular that $\cF(\ell) = \ic{\cF}(\ell) \cap \cO$ is contained in $I$ and indeed in $\ann_\cO M \subseteq I$.
    
    Hence our desired inequality is $1 \le \mFh(\ell) - (\ic{\mFh}(\ell) - \ic{\mFh}(\min\{\ell, m\})) = \dimlen \cO/\cF(\ell) - \dimlen \cO/I = \dimlen I/\cF(\ell)$, so it suffices to prove that one of the containments $I \supseteq \ann_\cO M \supseteq \cF(\ell)$ is strict. Since $\ic{\cO}/\ic{\cF}(\ell)$ is Artinian, some power of $\mFm\ic{\cO}$ must be contained in $\ic{\cF}(\ell)$; it follows that some power of $\mFm$ annihilates $M$ as well. Let $E \in \bN$ be minimal such that $\mFm^E \subseteq \ann_\cO M$. Since $|\ell| > 0$, we can see that $E > 0$ as well; otherwise, we would have $\cF(\ell) = (1)$ in $\cO$ and thus $\ic{\cF}(\ell) = (1)$ in $\ic{\cO}$. Thus $\mFm^E \subseteq \ann_\cO M$ implies $\mFm^{E-1} \subseteq \ann_\cO \mFm M$, and so $\ann_\cO M \ne \ann_\cO \mFm M$ by the minimality of $E$.

    {\it Third proof.} We prove that Condition (3) of Proposition \ref{prop:fintypeptwts} (a property formulated merely for $m$ and $2m$) implies $\min w_0 \ge -1$ via the general connection (\ref{eq:wtdiff}) between $w_0$ and the semigroup $\cS$ of $(C, o)$. For brevity, we abbreviate the reference (\ref{eq:wtdiff}) by $(\dag)$.
    
    Suppose $(C, o)$ is of finite CM type, so that the aforementioned condition holds. Then, by (\ref{eq:multw}), $|m| \le 3$. If $|m| \le 2$, $(C, o)$ is an $A_n$ germ for some $n \ge 0$ and hence satisfies $\min w_0^C = 0$ by Proposition \ref{prop:domA}; thus we need only verify the case with $|m| = 3$ (and so $r \le 3$).
    \vspace{-3pt}
    \begin{description}[left=0pt,itemsep=1mm]
        \item[Claim 1] If, for some $\ell \in \bN^r$ and $i \in \{1, \ldots, r\}$, $w_0(\ell+e^i) - w_0(\ell) = 1$, then $w_0(s+\ell-\ell'+e^i)-w_0(s+\ell-\ell')=1$ too for any $s\in\mathcal{S}$ and $\ell' \in \bN^r$ with $\ell_i = 0$. Indeed, by $(\dag)$, the first identity is equivalent to $\ic{\Delta}_i(\ell) \ne \emptyset$, but then $\ic{\Delta}_i(s + \ell - \ell') \ne \emptyset$ too, since this contains $s + \ic{\Delta}_i(\ell)$.
        
        \item[Claim 2] $w_0(\ell) \ge -1$ for any lattice point $m \le \ell \le 2m$. Indeed, along any `path' $m$, $m + e^i$, $m + e^i + e^j$, $2m$ the value of $w_0$ either increases by 1 or decreases by 1 at each step (cf. $(\dag)$). At the first step, it increases from $-1$ to $0$ (since $m \in \cS$), and there must exist one more step when it increases since $w_0(2m) \ge -1$ by hypothesis. The claim follows.
    \end{description}
    \vspace{-3pt}
    
    Now, if we shift (any path of) the rectangle $R(m, 2m)$ by $km \in \cS$ (for $k \ge 0$), the two claims imply $w_0(\ell) \ge -1$ for any lattice point $km \le \ell \le (k + 1)m$. This fact already ends the proof for $r = 1$; assume next that $r = 2$ and so, up to reordering, $m = (2, 1)$. Consider $\ell = (\ell_1,\ell_2) \in \bN^r$ with $2\ell_2 > \ell_1$. Then there is a sequence of lattice points with increasing second coordinate from $(\ell_1, \lceil \ell_1/2 \rceil)$, which lies between successive multiples of $m$, to $\ell$. Since each $km \in \cS$, $w_0$ is increasing along this path by $(\dag)$, hence $w_0(\ell) \ge -1$. Similarly, consider $\ell = (\ell_1, \ell_2) \in \bN^r$ with $2\ell_2 < \ell_1$. 
    Then there is a sequence of lattice points with increasing first coordinate from $(2\ell_2, \ell_2)$ to $\ell$. By $(\dag)$, $w_0$ increases at all odd steps along this path, so $w_0(\ell) \ge -1$.
    
    Finally, assume that $r = 3$, hence $m = (1, 1, 1)$. We prove $w_0(\ell) \ge -1$ by decreasing induction on $d(\ell) := \max_i\{\ell_i\} - \min_i\{\ell_i\}$. If $d(\ell) \le 1$, then $km \le \ell \le (k + 1)m$ for some $k \ge 0$, hence $w_0(\ell) \ge -1$. Fix $\ell\in \bN^r$ with $d(\ell) > 1$ and assume without loss of generality that $\ell_1 \le \ell_2 \le \ell_3$. If $\ell_3 > \ell_2$, then $w_0(\ell) > w_0(\ell-e^3) \ge 1$ by $(\ell_3-1)m \in \ic{\Delta}_3(\ell-e^3)$ and induction. If $\ell_2=\ell_3$, then by (the proof of) Claim 2 at least one of the inequalities $w_0(m + e^1 + e^i) > w_0(m + e^1)$ for $i \in \{2, 3\}$ holds. Hence, by Claim 1, one of the inequalities $w_0(\ell) > w_0(\ell - e^i)$ for $i \in \{2, 3\}$ holds as well.
\end{proof}

\begin{cor} \label{cor:ADEGor}
    Suppose that $(C, o)$ is Gorenstein. Then $(C, o)$ is an $ADE$ germ if and only if $\min w_0^C \ge -1$. In particular, a reduced plane curve singularity is $ADE$ if and only if $\min w_0 \ge -1$.
\end{cor}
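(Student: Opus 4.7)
The plan is to derive the corollary as a fairly direct combination of Theorem \ref{thm:fintypewtbd}, Proposition \ref{prop:finitedom}, and the classification of germs dominating $ADE$ singularities recalled in Example \ref{ex:dom}(3). The first sentence is the main content; the ``in particular'' follows because reduced plane curve germs are hypersurfaces and hence Gorenstein (complete intersections are Gorenstein).

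For the forward implication, I would note that the explicit weight computations of Section \ref{sec:comput} (Subsections \ref{subsec:aneven}--\ref{subsec:egerms}) exhibit $\min w_0 = 0$ for every $A_n$ germ and $\min w_0 = -1$ for every $D_n$ and $E_n$ germ, so any $ADE$ germ automatically satisfies $\min w_0^C \ge -1$. This direction does not require the Gorenstein hypothesis.

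For the backward implication, I would suppose $\min w_0^C \ge -1$. Theorem \ref{thm:fintypewtbd} then tells us $(C,o)$ is of finite Cohen-Macaulay type, and Proposition \ref{prop:finitedom} yields that $(C,o)$ birationally dominates some $ADE$ germ. Here I would invoke Example \ref{ex:dom}(3), which gives the complete classification of germs that birationally dominate an $ADE$ singularity: each such germ is either itself $ADE$ or else non-Gorenstein (with one of the four listed local algebras). Since $(C,o)$ is assumed Gorenstein, the non-Gorenstein options are ruled out and we conclude $(C,o)$ is $ADE$.

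The only place this could ``go wrong'' is if the classification in Example \ref{ex:dom}(3) were inaccessible, but it is stated as part of the preliminaries. Thus no genuine obstacle is expected; the argument is essentially a three-line chain of citations. For the final sentence I would simply remark that a reduced plane curve germ $(\{f=0\},0) \subset (\bC^2,0)$ is a hypersurface and therefore Gorenstein, so the first statement applies verbatim.
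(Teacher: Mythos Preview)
Your argument is correct and matches the paper's own proof almost exactly: the paper likewise reduces both directions to Theorem \ref{thm:fintypewtbd} and then, for the backward implication, invokes either \cite[Chapter 8]{Yo} (Gorenstein of finite CM type $\Rightarrow$ $ADE$) or, as an explicitly noted alternative, precisely your route via Proposition \ref{prop:finitedom} and Example \ref{ex:dom}(3). The only cosmetic difference is that for the forward implication the paper uses Theorem \ref{thm:fintypewtbd} directly (since $ADE$ germs are of finite CM type) rather than the explicit weight tables of Section \ref{sec:comput}, but this is immaterial.
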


\begin{proof}
    By Theorem \ref{thm:fintypewtbd}, $\min w_0 \ge -1$ if and only if $(C,o)$ is of finite CM type. On the other hand, by, e.g., \cite[Chapter 8]{Yo}, a Gorenstein singularity of finite CM type is $ADE$. (The statement also follows from a combination of Theorem \ref{thm:fintypewtbd}, Proposition \ref{prop:finitedom}, and the list from Example \ref{ex:dom}(3).)
\end{proof}

Since for plane curve singularities $w_0$ depends only on the embedded topological type of $(C,o) \subset (\bC^2,o)$, the second statement of the above corollary is a new embedded topological characterization of simple germs.

\section{Curves of finite Cohen-Macaulay type. Finer characterizations}
\label{sec:fcmt_spect}

Recall that, by Proposition \ref{prop:finitedom}, a reduced curve singularity is of finite CM type if and only if it birationally dominates an $ADE$ germ. In this section, we obtain conditions distinguishing those germs which birationally dominate an $A$ germ, as well as those dominating germs of type $AD$. Our goal is to separate these families of finite-CM-type singularities using the language of lattice homology and its associated constructions. These distinctions provide a finer classification of finite-CM-type germs, and the corresponding subclasses will be used in our characterization of tame-CM-type germs as well. As usual, we let $(C, o)$ be a reduced complex-analytic curve germ and adopt the conventions of Section \ref{sec:setup}; we will also use the machinery of Subsection \ref{subsec:minspect}.

\subsection{Birational dominance and $A_n$ germs}

\begin{prop} \label{prop:domA}
    The following are equivalent:
    \begin{enumerate}[label=\normalfont(\arabic*)]
        \item $(C,o)$ has multiplicity at most 2 at $o$.
        \item $\min w_0^C = 0$.
        \item $(C,o)$ birationally dominates a simple germ of type $A_n$ (for some $n \ge 0$).
        \item $(C,o)$ is a simple germ of type $A_n$ (for some $n \ge 0$).
    \end{enumerate}
\end{prop}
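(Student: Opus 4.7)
The plan is to prove the chain $(4) \Rightarrow (3) \Rightarrow (1) \Rightarrow (4)$, establishing the equivalence of (1), (3), (4), and then close with $(1) \Leftrightarrow (2)$.

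The easy implications go as follows. $(4) \Rightarrow (3)$ is immediate, since every germ birationally dominates itself via the identity. $(3) \Rightarrow (1)$ follows from Lemma~\ref{lem:FEAT}(d): if $(C, o)$ birationally dominates some $A_n$ germ, then in the identified lattice $m(A_n) \ge m(C, o)$, and $|m(A_n)|$ is $1$ for $n = 0$ and $2$ for $n \ge 1$, giving $|m(C, o)| \le 2$. For $(2) \Rightarrow (1)$: by (\ref{eq:multw}), $w_0^C(m) = 2 - |m|$, and $0 = \min w_0^C \le w_0^C(m)$ forces $|m| \le 2$.

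The heart of the matter is $(1) \Rightarrow (4)$, via the classical analytic classification of reduced curve germs of multiplicity at most two. For $|m| = 1$, the germ is smooth and hence of type $A_0$. For $|m| = 2$ with $r = 1$, Weierstrass preparation puts the defining equation of the irreducible double point into the form $y^2 + c(x)$ with $c$ of positive order; irreducibility forces $\ord c$ to be odd, say $2l + 1$, and after normalizing the unit factor one obtains $A_{2l}$. For $|m| = 2$ with $r = 2$, both branches are smooth, so (after coordinate change) one is $\{y = 0\}$ and the other is $\{y = h(x)\}$ with $h$ of order $k \ge 1$; completing the square in $y(y - h)$ and normalizing the resulting unit factor yields the normal form $y^2 - x^{2k}$, i.e., an $A_{2k-1}$ germ. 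The remaining $(1) \Rightarrow (2)$ is then immediate from this classification together with the explicit tables of Subsections~\ref{subsec:aneven} and \ref{subsec:anodd}, which give $\min w_0^{A_n} = 0$ for every $n \ge 0$.

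I expect the main obstacle to be the classification step $(1) \Rightarrow (4)$. Though each subcase is standard singularity-theoretic material, making sure no exotic germ of multiplicity $\le 2$ escapes the reduction deserves either a clean citation or an explicit argument; the other implications fall out of the preliminaries and the computations of Section~\ref{sec:comput}.
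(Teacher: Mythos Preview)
Your chain of implications is correct and the cycle $(4)\Rightarrow(3)\Rightarrow(1)\Rightarrow(4)$ together with $(1)\Leftrightarrow(2)$ closes cleanly. The shortcut $(3)\Rightarrow(1)$ via Lemma~\ref{lem:FEAT}(d) is a nice alternative to the paper's route $(3)\Rightarrow(2)\Rightarrow(1)$ through Lemma~\ref{lem:FEAT}(g).

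There is, however, a genuine gap in your $(1)\Rightarrow(4)$: you invoke Weierstrass preparation and plane coordinate changes without first establishing that the germ \emph{is} a plane curve. The statement concerns an abstract reduced curve germ $(C,o)$, and a priori its embedding dimension could exceed~$2$; Weierstrass preparation and the description ``one branch is $\{y=0\}$, the other $\{y=h(x)\}$'' both presuppose an embedding in $(\bC^2,0)$. The paper handles exactly this point by citing the Abhyankar inequality $\edim(C,o)\le |m|$ (valid for Cohen--Macaulay local rings, hence for reduced curve germs), which forces $\edim(C,o)\le 2$ when $|m|\le 2$. Once that is in place, your case-by-case reduction to the $A_n$ normal forms is a perfectly valid substitute for the paper's appeal to the splitting theorem of \cite{AGV}; but without it the argument does not get off the ground.
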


\begin{proof}
    (4)$\Rightarrow$(3) is clear. (3)$\Rightarrow$(2) follows from Lemma \ref{lem:FEAT}(g) and $\min w_0^{A_n} \ge 0$ (see Subsections \ref{subsec:aneven} and \ref{subsec:anodd}). For (2)$\Rightarrow$(1), we can use (\ref{eq:multw}) with $\ell = m$. Finally, to show (1)$\Rightarrow$(4), we use the Abhyankar inequality (\cite{Abh}) to see that the embedding dimension $\edim (C,o) \le |m| \le 2$. Hence $(C, o)$ is a plane curve with multiplicity $\le 2$, so by the splitting theorem (e.g., \cite[Section 11.1]{AGV}) it is of type $A_n$.
\end{proof}

\subsection{Birational dominance and $D_n$ germs}

\begin{prop} \label{prop:domD}
    Suppose that $(C, o)$ does not satisfy the equivalent conditions of Proposition \ref{prop:domA}. Then $(C,o)$ birationally dominates a simple singularity of type $D_n$ (for some $n \ge 4$) if and only if $\min w_0^C = -1$ and $(C, o)$ has minimal spectral 1-cycle of weight 0.
\end{prop}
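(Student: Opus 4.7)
I would prove each direction separately. For the forward direction, suppose $(C, o)$ birationally dominates some $D_n$; by Proposition \ref{prop:finitedom} it is of finite CM type, so Theorem \ref{thm:fintypewtbd} yields $\min w_0^C \geq -1$. The standing exclusion of Proposition \ref{prop:domA}'s conditions rules out $\min w_0^C = 0$, giving $\min w_0^C = -1$, while combining $|m| \geq 3$ (again from the exclusion) with Proposition \ref{prop:finiteoc}(a) forces $|m| = 3$. To exhibit a minimal spectral 1-cycle of weight $0$, I would invoke Lemma \ref{lem:FEAT}(c, d, g) to identify the lattices and multiplicity vectors of $(C, o)$ and $(D_n, o)$ and obtain $w_0^C \geq w_0^{D_n}$ pointwise; the tables in Subsections \ref{subsec:dnodd}--\ref{subsec:dneven} then exhibit at least one pair $(i, j)$ with $w_0^{D_n}(m + e^i + e^j) = 1$, and the $\pm 1$ step rule (\ref{eq:wtdiff}) applied to the known values $w_0^C(m + e^i) = w_0^C(m + e^j) = 0$ pins down $w_0^C(m + e^i + e^j) = 1$. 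Finally, I would verify nontriviality of the resulting relative 1-cycle in $\mFM_{1, 0}(C, o) = H_1(S_0^C \cap B, S_0^C \cap A)$ by analyzing the graph $G$ on $\{1, \ldots, r\}$ whose edges are pairs $(a, b)$ with $w_0^C(m + e^a + e^b) = -1$: the 2-cells of $S_0^C$ through $m$ contribute boundary relations identifying the edge generators $[m, m + e^a] \sim [m, m + e^b]$ along $G$'s edges, and relative $H_1$ has rank one less than the number of connected components of $G$. The dominance bound $w_0^C \geq w_0^{D_n}$ forces $G$ to omit at least one pair (indeed, at least two out of three when $r = 3$), so $G$ is disconnected and $\mFM_{1, 0}(C, o) \neq 0$.

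For the backward direction, I would argue by contradiction. Assume $\min w_0^C = -1$ and $\mFM_{1, 0}(C, o) \neq 0$, but $(C, o)$ dominates no $D_n$. Again $(C, o)$ is of finite CM type and hence dominates some $ADE$ germ (Proposition \ref{prop:finitedom}); exclusion of $A$-domination leaves only $E$-domination, so by the classification in Example \ref{ex:dom}(3) the germ $(C, o)$ must be $E_6$, $E_7$, $E_8$, or one of the four listed non-Gorenstein germs. The singly-branched candidates ($E_6$, $E_8$, $\bC\{t^3, t^4, t^5\}$, $\bC\{t^3, t^5, t^7\}$) have $r = 1$, so $\mFM_{1, 0}$ vanishes trivially. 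For $E_7$, the table of Subsection \ref{subsec:egerms} gives $w_0^{E_7}(m + e) = w_0^{E_7}(3, 2) = -1$, so no pair satisfies the necessary condition of Lemma \ref{lem:filtcycstruct} and $\mFM_{1, 0}(E_7) = 0$. For the non-Gorenstein germ $\bC\{x, y, z\}/(x^3 - y^4, xz - y^2, y^2 z - x^2, yz^2 - xy)$, which has $r = 2$ and $m = (1, 2)$, a direct semigroup computation---using that elements of $\cO$ cannot have order exactly $2$ on the cuspidal branch while also having first-branch order at least $2$---shows $w_0(m + e) = -1$, again precluding a minimal spectral 1-cycle. Finally, the family $\bC\{x, y, z\}/(x^2 - y^n, xz, yz)$ is not in the ``only $E$'' scenario: the substitutions $Y := y$ and $X := z + x$ live in $\cO$ and satisfy $(X^2 - Y^n) Y = 0$, embedding $\cO_{D_{n+2}}$ in $\cO$ and so giving explicit dominance of $D_{n+2}$. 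In every permitted case $\mFM_{1, 0}(C, o) = 0$, contradicting the hypothesis.

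The principal obstacle is the case analysis in the backward direction, which relies on the classification of Example \ref{ex:dom}(3) together with specific semigroup or weight computations for $E_7$ and the $r = 2$ non-Gorenstein germ, as well as the explicit construction of a $D$-dominance for the other non-Gorenstein family. A more conceptual proof producing a $D_n$-subring of $\cO$ directly from the witnesses of the spectral cycle would be preferable but is not apparent to me.
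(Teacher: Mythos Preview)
Your forward direction matches the paper's closely: both obtain $\min w_0^C=-1$ and $|m|=3$ from the standing hypothesis and the dominance relation, then use Lemma~\ref{lem:FEAT}(g) together with the explicit $D_n$ weight tables to force $w_0^C(m+e^i+e^j)=1$ for at least one pair (two pairs when $r=3$). Your graph formulation of the relative $H_1$ computation is a clean repackaging of the paper's cellular argument.

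Your backward direction is correct but takes a genuinely different route. You argue by contradiction via the Greuel--Kn\"orrer classification quoted in Example~\ref{ex:dom}(3), reducing to a short finite list and checking each case by hand (the $r=1$ germs trivially, $E_7$ and the two-branch non-Gorenstein germ by computing $w_0(m+e)=-1$, and the remaining non-Gorenstein family by exhibiting an explicit $D_{n+2}$-subring $\bC\{z+x,y\}$). The paper instead gives precisely the constructive argument you flag as ``preferable'': starting from the weight pattern $w_0^C(m)=-1$, $w_0^C(m+e^i)=w_0^C(m+e^j)=0$, $w_0^C(m+e^i+e^j)=1$ guaranteed by Lemma~\ref{lem:filtcycstruct}, it applies Lemma~\ref{lem:eltfromwt} to extract two elements of $\cO$ with prescribed valuation profiles, then normalizes coordinates on $\ic\cO$ (separately for $r=2$ and $r=3$) to produce generators $x,y\in\cO$ satisfying a $D_n$ equation, following Yoshino. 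The tradeoff is clear: your approach is shorter but imports the full classification of finite-CM-type curve germs as a black box, while the paper's approach is self-contained and shows directly how the spectral-cycle witness \emph{produces} the dominating $D_n$.
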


\begin{proof}
    ($\Rightarrow$) Assume that $(C,o)$ birationally dominates some $D_n$ for $n \ge 4$. Then, by Lemma \ref{lem:FEAT}(g), $\min w^C_0 \ge \min w_0^{D_n} = -1$ (see Subsections \ref{subsec:dnodd} and \ref{subsec:dneven}); our hypothesis that $(C, o)$ does not satisfy the conditions of Proposition \ref{prop:domA} and (\ref{eq:multw}) then give us $\min w^C_0 = -1$, $w_0^C(m) = -1$, and $|m| = 3$. It remains to prove the existence of a minimal spectral 1-cycle of weight 0. Since the number of components of $D_n$ is 2 or 3, Lemma \ref{lem:FEAT}(b) gives $r \in \{2, 3\}$.
    
    Suppose first that $r = 2$, so that $n$ is odd, and order the branches so that $m = (2, 1)$ (this then holds for the dominated $D_n$ germ as well by Lemma \ref{lem:FEAT}(d)). Since $m \in \cS$, (\ref{eq:wtdiff}) gives us $w_0^C(m + e^1) = w_0^C(m + e^2) = 0$ and thus $w_0^C(m + e^1 + e^2) = \pm 1$. However, since $w_0^{D_n}(m + e^1 + e^2) = w_0^{D_n}((3, 2)) = 1$ by Subsection \ref{subsec:dnodd}, Lemma \ref{lem:FEAT}(g) gives $w_0^C(m + e^1 + e^2) = 1$ and so we can see that $\mFM_{1,0}(C, o) = (E^1_{-3, 4})_0 = (E^1_{-m, 4})_0  \ne 0$.
    
    Assume next that $r = 3$, so that $n$ is even and $m = (1, 1, 1)$. As before, $w_0^C(m + e^i) = 0$ for any $1 \le i \le r$ and $w_0^C(m + e^i + e^j) = \pm 1$ for any $1 \le i < j \le r$. Lemma \ref{lem:FEAT}(g) and Subsection \ref{subsec:dneven} then imply that $w_0^C(m + e^i + e^j) = 1$ for at least two choices of $i$ and $j$; we can thus see again that $\mFM_{1,0}(C, o) = (E^1_{-3, 4})_0 = (E^1_{-m, 4})_0  \ne 0$.
    
    ($\Leftarrow$) We now suppose that $\min w_0^C = -1$ and $(C, o)$ has a minimal spectral 1-cycle of weight 0; by (\ref{eq:multw}) and our hypothesis on $(C, o)$, it follows that $|m| = 3$ and thus $r \le 3$ as well. By, e.g., Lemma \ref{lem:filtcycstruct}, we can see that the hypothesis $\mFM_{1,0}(C, o) \ne 0$ gives $r > 1$, so $r \in \{2, 3\}$; we handle these cases separately, following the strategy of \cite[p. 73]{Yo}.
    
    Suppose first that $r = 2$. Then  $\ic{\cO} \cong \bC\{t_1\} \times \bC\{t_2\}$. By reordering the branches, we can also assume that $m = (2, 1)$ and, correspondingly, $\mFm\ic{\cO} = (({t_1}^2, 0), (0, t_2))$. By Lemmas \ref{lem:filtcycstruct} and \ref{lem:spectvan}, $\mFM_{1,0}(C, o) \ne 0$ gives  $w_0^C(m) = -1$, $w_0^C(m + e^1) = w_0^C(m + e^2) = 0$, and $w_0^C(m + e^1 + e^2) = 1$. Since $m + e^1 = (3, 1)$ and $m + e^1 + e^2 = (3, 2)$, $\ic{\cF}(m + e^1) = (({t_1}^3, 0), (0, t_2))$ and $\ic{\cF}(m + e^1 + e^2) = (({t_1}^3, 0), (0, {t_2}^2))$. By Lemma \ref{lem:eltfromwt}, we thus obtain an element of $\cO$ which is equivalent to $(0, t_2)$ modulo $(({t_1}^3, 0), (0, {t_2}^2))$. That is, if we view $\cO$ as a subring of $\ic{\cO} \cong \bC\{t_1\} \times \bC\{t_2\}$, we can see that $\left(\sum_{i=3}^\infty a_i{t_1}^i, t_2 + \sum_{i=2}^\infty b_i{t_2}^i\right) \in \cO$ for some coefficients $a_i, b_i \in \bC$ making the resulting power series converge.
    
    Likewise, applying Lemma \ref{lem:eltfromwt} to $m + e^2$ and $m + e^1 + e^2$ reveals that $\cO$ contains an element which is a scalar multiple of $({t_1}^2, 0)$ modulo $(({t_1}^3, 0), (0, {t_2}^2))$; that is, we have $\left({t_1}^2 + \sum_{i=3}^\infty c_i{t_1}^i, \sum_{i=2}^\infty d_i{t_2}^i\right) \in \cO$ for some coefficients $c_i, d_i \in \bC$ making the resulting power series convergent.
    
    By taking an appropriate change of the second coordinate for $\bC\{t_1\} \times \bC\{t_2\}$, we can suppose without loss of generality that $b_i = 0$ for all $i \ge 2$, so that our first element becomes $\left(\sum_{i=3}^\infty a_i{t_1}^i, t_2\right)$. Then, since $\cO$ contains any convergent power series in this element by standard properties of convergent power series rings, we can subtract from our second element appropriately to take $d_i = 0$ for all $i \ge 2$. Using the fact that such rings are Henselian, we can then change our first coordinate to guarantee $c_i = 0$ for all $i \ge 3$ as well, so that our second  element becomes simply $({t_1}^2, 0)$. Since the birationality of the normalization map implies that the conductor ideal has finite $\bC$-codimension in $\ic{\cO}$ and hence that $(g(t_1), 0) \in \cO$ for all $g(t_1) \in \bC\{t_1\}$ with sufficiently high order of vanishing, we can then take an appropriate $\bC$-algebraic combination of $({t_1}^2, 0)$, $\left(\sum_{i=3}^\infty a_i{t_1}^i, t_2\right)$, and a chosen high-order element to suppose that $a_i \ne 0$ for exactly one value of $i$, which is odd---that is, for some odd $n \ge 5$, we have $(a_{n-2}{t_1}^{n-2}, t_2) \in \cO$. Taking a $\bC$-linear change of our first coordinate then allows us to suppose that $({t_1}^{n-2}, t_2) \in \cO$ while still preserving the containment $({t_1}^2, 0) \in \cO$. Therefore, as in \cite{Yo}, we have $\bC\{x, y\}/(y(x^2 - y^{n-2})) \cong \bC\{({t_1}^{n-2}, t_2), ({t_1}^2, 0)\} \subseteq \cO$ as a subring, and moreover since $n$ is odd we can see that the composed map from the normalization of $C$ to the corresponding curve germ is of degree 1 over each branch. Hence $C$ birationally dominates the simple singularity of the corresponding type $D_n$, as desired.
    
    Now suppose that $r = 3$, so that $\ic{\cO} \cong \bC\{t_1\} \times \bC\{t_2\} \times \bC\{t_3\}$. In this case, we necessarily have $m = (1, 1, 1)$ and hence $\mFm\ic{\cO} = \{(t_1, 0, 0), (0, t_2, 0), (0, 0, t_3)\}$. Since we have a minimal spectral 1-cycle of weight 0, Lemmas \ref{lem:filtcycstruct} and \ref{lem:spectvan} tell us that, up to a possible reordering of the branches, $w_0^C(m) = -1$, $w_0^C(m + e^1) = w_0^C(m + e^2) = 0$, and $w_0^C(m + e^1 + e^2) = 1$. By Lemma \ref{lem:eltfromwt}, we can then see that $\cO$ contains two elements which are scalar multiples of $(0, t_2, 0)$ and $(t_1, 0, 0)$ respectively modulo $(({t_1}^2, 0, 0), (0, {t_2}^2, 0), (0, 0, t_3))$; we can write these in turn as $\left(\sum_{i=2}^\infty \alpha_i {t_1}^i, t_2 + \sum_{i=2}^\infty \beta_i {t_2}^i, \sum_{i=1}^\infty \gamma_i {t_3}^i\right)$ and $\left(t_1 + \sum_{i=2}^\infty \delta_i {t_1}^i, \sum_{i=2}^\infty \varepsilon_i {t_2}^i, \sum_{i=1}^\infty \zeta_i {t_3}^i\right)$ for some $\alpha_i, \beta_i, \gamma_i, \delta_i, \varepsilon_i, \zeta_i \in \bC$. Moreover, since $\mFm\ic{\cO} = \{(t_1, 0, 0), (0, t_2, 0), (0, 0, t_3)\}$, we can see by considering the orders of vanishing of the generators of $\mFm$ along each branch that $\cO$ must contain an element with all such orders equal to 1; with an appropriate scaling of our coordinates and other chosen elements, we can take this to have the form $\left(t_1 + \sum_{i=2}^\infty \eta_i {t_1}^i, {t_2} + \sum_{i=2}^\infty \theta_i {t_2}^i, t_3 + \sum_{i=2}^\infty \iota_i {t_3}^i\right)$ for $\eta_i, \theta_i, \iota_i \in \bC$.
    
    If $\gamma_1 \ne 0$, we can proceed as follows. By subtracting appropriate convergent power series of order at least 2 in our third and second elements, we can take $\alpha_i = \gamma_i = 0$ for all $i \ge 2$, and then by a change of the second coordinate we can take $\beta_i = 0$ for all $i \ge 2$ as well. Hence our first element becomes $(0, t_2, \gamma_1t_3)$. We can then take a change of the first coordinate and subtract the appropriate convergent power series in our first element from our third element to put it in the form $(t_1, t_2, t_3 + p(t_3))$ for some $p(t_3) \in \bC\{t_3\}$ of order at least 2. Then, if we let $x := (t_1, t_2, t_3 + p(t_3))$ be our third element and $y := (0, t_2, \gamma_1t_3)$, the subring of $\cO$ consisting of convergent power series in $x$ and $y$ is given by $\bC\{x, y\}/(y(x-y)(x - \frac{y}{\gamma_1} - p(\frac{y}{\gamma_1})))$, as in \cite{Yo}. This corresponds to a curve germ of multiplicity 3 with either 2 or 3 tangent directions; this germ is thus a simple singularity of type $D_n$ for some $n \ge 4$ by \cite[p. 63]{Yo}, and $(C,o)$ birationally dominates it by the containment of power series rings and the observation that the map from the normalization is again of degree 1 over each branch.
    
    If $\zeta_1 \ne 0$, we can repeat the same reasoning with our second element in place of the first to get elements $x := (t_1, t_2, t_3 + p(t_3))$ (with $p(t_3) \in \bC\{t_3\}$ again of order at least 2) and $y := (t_1, 0, \zeta_1t_3)$ generating a subring corresponding to a simple singularity of type $D_n$ for some $n \ge 4$. On the other hand, if $\gamma_1 = \zeta_1 = 0$, we can add our first and second elements, subtract a convergent power series in the third, and then take changes of our first and second coordinates to obtain $(t_1, t_2, 0) \in \cO$; taking a change of the third coordinate and subtracting a convergent power series in this new element then lets us write our third element in the form $(t_1 + p(t_1), t_2, t_3)$ with $p(t_1) \in \bC\{t_1\}$ of order at least two as before, and so the reasoning of \cite{Yo} again gives the desired result.
\end{proof}

\subsection{Birational dominance and $E_6, E_7, E_8$ germs}

Theorem \ref{thm:fintypewtbd} and Propositions \ref{prop:domA} and \ref{prop:domD} together imply the following:

\begin{prop} \label{prop:domE}
    Suppose $(C, o)$ does not birationally dominate a simple singularity of type $A_n$ (for any $n \ge 0$) or $D_n$ (for any $n \ge 4$). Then $(C, o)$ birationally dominates a simple singularity of type $E_6$, $E_7$, or $E_8$ if and only if $\min w_0^C = -1$.
\end{prop}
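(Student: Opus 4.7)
The plan is to derive this as a direct consequence of the three prior results cited in the text, with essentially no new geometric input beyond the $ADE$ trichotomy provided by Proposition \ref{prop:finitedom}. The two implications are logically independent, and each reduces to a short deduction.

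For the forward direction, suppose $(C, o)$ birationally dominates a germ of type $E_6$, $E_7$, or $E_8$. The explicit computations in Subsection \ref{subsec:egerms} show that $\min w_0^{E_n} = -1$ for $n \in \{6, 7, 8\}$, so Lemma \ref{lem:FEAT}(g) gives $\min w_0^C \ge -1$. The hypothesis that $(C, o)$ does not birationally dominate any $A_n$ germ combined with the equivalence (2)$\Leftrightarrow$(3) of Proposition \ref{prop:domA} then rules out the value $\min w_0^C = 0$. Since $w_0^C(0) = 0$ forces $\min w_0^C \le 0$, this pins down $\min w_0^C = -1$.

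For the reverse direction, I would assume $\min w_0^C = -1$ and invoke Theorem \ref{thm:fintypewtbd} to conclude that $(C, o)$ is of finite Cohen-Macaulay type. Proposition \ref{prop:finitedom} then guarantees that $(C, o)$ birationally dominates at least one simple germ of type $A$, $D$, or $E$. The hypothesis that it dominates no $A_n$ and no $D_n$ leaves only the $E$ case, which is exactly what we want.

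There is essentially no main obstacle here: both directions consist of bookkeeping with the previously established results, and no further semigroup or module-theoretic work is required. The only point worth noting carefully is that one must use the equivalence (2)$\Leftrightarrow$(3) in Proposition \ref{prop:domA} in the forward direction (to translate non-dominance of $A_n$ into a statement about $w_0$) and Proposition \ref{prop:finitedom} in the reverse direction (to upgrade the homological conclusion $\min w_0^C = -1$ into actual birational dominance of some $ADE$ germ). Everything else is a matter of case elimination.
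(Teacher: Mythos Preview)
Your proof is correct and matches the paper's own argument: the paper simply states that this proposition follows from Theorem \ref{thm:fintypewtbd} together with Propositions \ref{prop:domA} and \ref{prop:domD}, which is exactly the elimination argument you carry out (your use of Proposition \ref{prop:finitedom} in the reverse direction is implicit in Theorem \ref{thm:fintypewtbd}). The paper also remarks that a direct proof avoiding Theorem \ref{thm:fintypewtbd} is possible by imitating the strategy of Proposition \ref{prop:domD} via Lemma \ref{lem:eltfromwt}, but it does not give details, so your deduction is precisely the intended route.
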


This proposition can also be proved directly (without using Theorem \ref{thm:fintypewtbd}) through a similar strategy as in the proof of Proposition \ref{prop:domD}, again following \cite[p. 72]{Yo} by way of Lemma \ref{lem:eltfromwt} (applied to well-chosen lattice points).

\section{Curves of tame Cohen-Macaulay type. Reformulation by weights}
\label{sec:tcmt}

\subsection{Setup} 

In this section, we rewrite the so-called overring conditions of Proposition \ref{prop:tameoc} in terms of the weight function $w_0$. As the name suggests, these conditions are formulated in terms of several subrings of $\ic{\cO}$ containing $\cO$; for our later convenience, we preface the discussion with a minor lemma dealing with such objects:

\begin{lem} \label{lem:quotlen}
    Let $(\cO, \mFm)$ be any reduced local ring and denote by $\ic{\cO}$ its integral closure in its total fraction ring. Let $I$ be an ideal of $\ic{\cO}$ and consider the $\cO$-subalgebra $\tilde \cO := I + \cO$ of $\ic{\cO}$ generated by $I$ and $\cO$. Then $\len_\cO \frac{I + \cO}{\mFm I + \cO} = -1 + \len_\cO \tilde \cO/\mFm \tilde \cO$.
\end{lem}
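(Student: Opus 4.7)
The plan is to relate the two quotients via a short exact sequence of $\cO$-modules. Since $\tilde\cO = I + \cO$ one has $\mFm\tilde\cO = \mFm I + \mFm \cO = \mFm I + \mFm$, so we may rewrite $\tilde\cO/\mFm\tilde\cO = (I + \cO)/(\mFm I + \mFm)$. Because $\mFm \subseteq \cO$, there is then a natural $\cO$-linear surjection
$$\tilde\cO/\mFm\tilde\cO \;=\; \frac{I + \cO}{\mFm I + \mFm} \;\twoheadrightarrow\; \frac{I + \cO}{\mFm I + \cO}$$
whose kernel is $K := (\mFm I + \cO)/(\mFm I + \mFm)$. Additivity of length over the resulting short exact sequence reduces the claim to showing $\len_\cO K = 1$.

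Next, I would identify $K$ with the image of $\cO$ under the natural composition $\cO \hookrightarrow \tilde\cO \twoheadrightarrow \tilde\cO/\mFm\tilde\cO$: every element $mi + o$ of $\mFm I + \cO$ is congruent to $o \in \cO$ modulo $\mFm I + \mFm$, so this map is surjective onto $K$. The first isomorphism theorem then gives $K \cong \cO/(\cO \cap (\mFm I + \mFm))$, and it remains to verify the equality $\cO \cap (\mFm I + \mFm) = \mFm$. One containment is immediate; for the other, writing an arbitrary element as $x = a + b$ with $a \in \mFm$ and $b \in \mFm I$, one has $b = x - a \in \cO \cap \mFm I$, so the problem reduces to showing $\cO \cap \mFm I \subseteq \mFm$.

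The only substantive step is this last containment, which is where I would invoke Nakayama's lemma, and this is the main (modest) obstacle. If some $b \in \cO \cap \mFm I$ failed to lie in $\mFm$, it would be a unit of $\cO$, so $1 = b^{-1}b$ would belong to $\mFm I \subseteq \mFm\tilde\cO$. Writing such a relation explicitly as $1 = \sum_i m_i x_i$ and passing to the subalgebra $M := \cO[x_1, \ldots, x_n] \subseteq \tilde\cO \subseteq \ic\cO$, which is a finite $\cO$-module since each $x_i$ is integral over $\cO$, one obtains $1 \in \mFm M$ and hence $M = \mFm M$. Nakayama's lemma then forces $M = 0$, contradicting $1 \in M$. (In the settings relevant to the rest of the paper, $\tilde\cO$ is itself a finite $\cO$-module and Nakayama applies directly, so the auxiliary subalgebra $M$ can be dispensed with.) With $\cO \cap \mFm I \subseteq \mFm$ established, $K \cong \cO/\mFm$ has length $1$, and the desired formula follows.
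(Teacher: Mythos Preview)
Your proof is correct and follows essentially the same approach as the paper's: the same identification $\tilde\cO/\mFm\tilde\cO = (I+\cO)/(\mFm I + \mFm)$, the same short exact sequence with kernel $(\mFm I + \cO)/(\mFm I + \mFm)$, and the same reduction to the containment $\mFm I \cap \cO \subseteq \mFm$. The paper simply asserts this last containment, whereas you supply a Nakayama argument for it; this extra care is reasonable, though in the paper's intended applications $\ic\cO$ (hence $\tilde\cO$) is a finite $\cO$-module and the containment is immediate.
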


\begin{proof}
    First use $\frac{\tilde \cO}{\mFm \tilde \cO} = \frac{I + \cO}{\mFm I + \mFm}$ and the short exact sequence $0 \to \frac{\mFm I + \cO}{\mFm I + \mFm} \to \frac{I + \cO}{\mFm I + \mFm} \to \frac{I + \cO}{\mFm I + \cO} \to 0$, then note that, by some standard isomorphisms and the containment $\mFm I \cap \cO \subseteq \mFm$, $\frac{\mFm I + \cO}{\mFm I + \mFm} \cong \frac{\cO}{\mFm}$, which has length 1. 
\end{proof}

(In the cases we are interested in, length over $\cO$ coincides with dimension as a $\bC$-vector space, which is the formulation we use in the majority of this note.)

\subsection{Results}

Let $(C, o)$ be a reduced complex-analytic curve germ; we use the conventions of Section \ref{sec:setup}, and will also make free use of the various notations introduced in Proposition \ref{prop:tameoc}.

Recall that, by (\ref{eq:multw}), Condition (O1a)---that is, $|m| \le 4$---is already equivalent to $\min w_0^C \ge -2$. We proceed to the reformulation of the other conditions:

\begin{lem} \label{lem:o1bwt}
    Assume that $(C,o)$ satisfies (O1a). Then $(C,o)$ satisfies (O1b) if and only if $w_0^{C_i}(m_i) \ge 0$ for all $1 \le i \le r$.
\end{lem}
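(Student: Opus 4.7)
The plan is to apply equation~(\ref{eq:multw}) branch by branch. Since each $(C_i,o)$ is irreducible, its lattice is $\bN$ and its multiplicity vector is the single integer $m_i$; thus (\ref{eq:multw}) applied to $(C_i,o)$ at $\ell = m_i$ yields $w_0^{C_i}(m_i) = 2 - m_i$. Consequently, the stated inequality $w_0^{C_i}(m_i) \ge 0$ is simply the condition $m_i \le 2$, and the right-hand side of the lemma just asserts that every irreducible component of $(C,o)$ has multiplicity at most $2$.

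Next I would translate (O1b) into the same language. From the decomposition $\ic{\cO}/\mFm\ic{\cO} \cong \prod_{i=1}^r \bC\{t_i\}/(t_i^{m_i})$, the vector $\lambda(\ic{\cO})$ is obtained by sorting the multiplicity vector $(m_1,\ldots,m_r)$ in non-decreasing order. Under the standing hypothesis (O1a), $|m| \le 4$, so an immediate case-check enumerates all possibilities for $\lambda(\ic{\cO})$, namely $(1)$, $(2)$, $(1,1)$, $(3)$, $(1,2)$, $(1,1,1)$, $(4)$, $(1,3)$, $(2,2)$, $(1,1,2)$, and $(1,1,1,1)$. Among these, the forbidden set $\{(3),(4),(1,3)\}$ is exactly the collection of vectors which contain at least one entry $\ge 3$.

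Combining the two observations, under (O1a) the condition (O1b) is equivalent to the requirement that $m_i \le 2$ for every $1 \le i \le r$, which in turn is equivalent to $w_0^{C_i}(m_i) \ge 0$ for all $i$. I do not anticipate any serious obstacle: once (\ref{eq:multw}) is invoked componentwise, the lemma reduces to simple bookkeeping about partitions of integers $\le 4$.
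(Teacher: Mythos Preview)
Your proof is correct and follows essentially the same approach as the paper: both arguments reduce (O1b) under (O1a) to the condition $m_i \le 2$ for all $i$ by enumerating the partitions of integers at most $4$ and identifying the forbidden set $\{(3),(1,3),(4)\}$ as precisely those with a part exceeding $2$, then invoke (\ref{eq:multw}) componentwise to translate this into the weight inequality.
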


\begin{proof}
    By (O1a), $\sum_{i=1}^r m_i \le 4$. Then, by considering all partitions of the integers 1, 2, 3, and 4 and observing that $\{[4], [1, 3], [3]\}$ is exactly the collection of such partitions which have a part of size greater than 2, we see that (O2b) holds if and only if $m_i \le 2$ for any $i$; the result follows by (\ref{eq:multw}).
\end{proof}

\begin{lem} \label{lem:o2awt}
    $(C,o)$ satisfies (O2a) if and only if $w_0^C(m + e) \ge |m| - r - 2$.
\end{lem}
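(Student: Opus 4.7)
The plan is to express $\dim_\bC \cO'/\mFm\cO'$ purely in terms of the weight function, by first rewriting this quotient's length as a difference of dimensions of subquotients of $\ic{\cO}$ and then invoking the additivity formula (\ref{eq:addwt}).

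The first step is to apply Lemma \ref{lem:quotlen} with the $\ic{\cO}$-ideal $I := t\ic{\cO}$, so that $\tilde{\cO} = I + \cO = \cO'$. This yields
\[
\dim_\bC \cO'/\mFm\cO' \;=\; 1 \;+\; \dim_\bC \frac{t\ic{\cO} + \cO}{\mFm\cdot t\ic{\cO} + \cO}.
\]
Next I would identify both ideals appearing in the right-hand quotient with lattice-filtration pieces. By definition of the filtration, $t\ic{\cO} = ({t_1},\ldots,{t_r})\ic{\cO} = \ic{\cF}(e)$; to see $\mFm\cdot t\ic{\cO} = \ic{\cF}(m+e)$ one writes $\mFm \cdot t\ic{\cO} = (\mFm\ic{\cO})\cdot(t\ic{\cO}) = \ic{\cF}(m)\cdot\ic{\cF}(e)$ using $\mFm\ic{\cO} = \ic{\cF}(m)$ from Proposition \ref{prop:lattfiltfacts} (third bullet), and then applies the multiplicativity identity $\ic{\cF}(\ell)\ic{\cF}(\ell') = \ic{\cF}(\ell+\ell')$ from the same proposition.

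With these identifications, formula (\ref{eq:addwt}) with $\ell = e$ and $\ell' = m$ gives
\[
\dim_\bC \frac{\ic{\cF}(e) + \cO}{\ic{\cF}(m+e) + \cO} \;=\; \frac{|m| + w_0(e) - w_0(m+e)}{2}.
\]
Since $e \le m$ and $e \ne 0$ (each component has positive multiplicity), (\ref{eq:multw}) gives $w_0(e) = 2 - r$. Substituting yields
\[
\dim_\bC \cO'/\mFm\cO' \;=\; \frac{|m| + 4 - r - w_0(m+e)}{2},
\]
and condition (O2a), namely $\dim_\bC \cO'/\mFm\cO' \le 3$, becomes $w_0(m+e) \ge |m| - r - 2$ after clearing denominators.

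The main technical point to get right is the identification $\mFm \cdot t\ic{\cO} = \ic{\cF}(m+e)$: one must argue that the ``small'' product $\mFm \cdot t\ic{\cO}$ (taking $\mFm \subseteq \cO$, not $\mFm\ic{\cO}$) already coincides with $\mFm\ic{\cO}\cdot t\ic{\cO}$, using that $t\ic{\cO}$ is an $\ic{\cO}$-ideal so that factors of $\ic{\cO}$ can be absorbed into it. Everything else is a direct substitution, and in particular the expression $\tfrac{|m|+4-r-w_0(m+e)}{2}$ is automatically a non-negative integer by the parity constraint $w_0(m+e) \equiv |m+e| = |m|+r \pmod{2}$ from (\ref{eq:wtdiff}), so no subtle integrality issues arise.
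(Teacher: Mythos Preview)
Your proof is correct and follows essentially the same approach as the paper: apply Lemma~\ref{lem:quotlen} with $I = t\ic{\cO}$, identify $t\ic{\cO} = \ic{\cF}(e)$ and $\mFm\,t\ic{\cO} = \ic{\cF}(m+e)$, then use (\ref{eq:addwt}) and (\ref{eq:multw}). You are more explicit than the paper about the identification $\mFm\cdot t\ic{\cO} = \mFm\ic{\cO}\cdot t\ic{\cO}$ (absorbing $\ic{\cO}$ into the $\ic{\cO}$-ideal $t\ic{\cO}$), which the paper's proof simply writes as $t\mFm\ic{\cO}$ without comment.
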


\begin{proof}
    Condition (O2a) holds exactly when $\dimlen \frac{t\ic{\cO} + \cO}{\mFm(t\ic{\cO} + \cO)} \le 3$, which is, by Lemma \ref{lem:quotlen}, equivalent to $\dimlen \frac{t\ic{\cO} + \cO}{t\mFm\ic{\cO} + \cO} \le 2$. Since $\ic{\cF}(m) = \mFm\ic{\cO}$ and $\ic{\cO}(e) = t\ic{\cO}$, by (\ref{eq:addwt}) this in turn is equivalent to $w_0^C(m + e) - w_0^C(e) \ge |m| - 4$. The result now follows since $w_0^C(e) = 2 - r$ by (\ref{eq:multw}).
\end{proof}

\begin{rem} \label{rem:emwtA}
    For our next result, it will be useful to note that a stronger version of the inequality of Lemma \ref{lem:o2awt} holds automatically for the $A_n$ germs: If $|m| \le 2$ (cf. Proposition \ref{prop:domA}), then $w_0^C(m + e) \ge |m| - r$. This can be seen directly from the computations in Subsections \ref{subsec:aneven} and \ref{subsec:anodd}.
\end{rem}

\begin{lem} \label{lem:o2bwt}
    Suppose $(C, o)$ satisfies (O1a), (O1b), and (O2a). Then $(C, o)$ satisfies (O2b) if and only if the following holds for each reduced subcurve germ $(\hat C, o) := (C_J, o)$ with $|J| = r - 1$: If we let $\hat m := m(\hat C, o) \in \bN^{r-1}$ be the multiplicity vector of $(\hat C, o)$, $\hat e \in \bN^{r-1}$ be the all-ones vector, and $\hat r := r-1$, then $w_0^{\hat C}(\hat m + \hat e) \ge |\hat m| - \hat r$.
    
    Moreover, this latter condition will always hold unless $|m| = 4$ and the branch excluded from $(\hat C, o)$ has multiplicity 1, and hence it is equivalent to require it only in these circumstances.
\end{lem}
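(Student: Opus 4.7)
My plan is to reduce Condition (O2b) to a weight condition on each subcurve $\hat C_i = (C_J, o)$ with $J = \{1, \ldots, r\} \setminus \{i\}$, following the general strategy of Lemmas \ref{lem:o1bwt} and \ref{lem:o2awt}. The starting point is an algebraic decomposition of $\cO_i' = \cO' + \bC\epsilon_i$ using the idempotent $\epsilon_i$. Since $\epsilon_i \in \cO_i'$, one can write $\cO_i' = \cO_i'\epsilon_i \oplus \cO_i'(1-\epsilon_i)$ as a ring; direct inspection (using $\bC + t_i\bC\{t_i\} = \bC\{t_i\}$) shows that $\cO_i'\epsilon_i \cong \bC\{t_i\}$ and $\cO_i'(1-\epsilon_i) = t\ic{\cO}_{\hat C_i} + \cO_{\hat C_i} = \cO_{\hat C_i}'$. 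A componentwise analysis shows similarly that $\mFm\cO_i' \cong \mFm_{C_i}\bC\{t_i\} \times \mFm_{\hat C_i}\cO_{\hat C_i}'$, so $\cO_i'/\mFm\cO_i' \cong \bC\{t_i\}/(t_i^{m_i}) \times \cO_{\hat C_i}'/\mFm_{\hat C_i}\cO_{\hat C_i}'$, and $\lambda(\cO_i')$ is the sorted merge of $(m_i)$ with $\lambda(\cO_{\hat C_i}')$. Since (O1b) forces $m_i \le 2$, the value $\lambda(\cO_i') = (1, 3)$ can only occur when $m_i = 1$ and $\lambda(\cO_{\hat C_i}') = (3)$, so (O2b) is equivalent to requiring $\lambda(\cO_{\hat C_i}') \ne (3)$ for every $i$ with $m_i = 1$.

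The second key step is to observe that for any reduced curve germ $\hat C$, the quotient $\cO_{\hat C}'/\mFm_{\hat C}\cO_{\hat C}'$ is always a \emph{local} $\bC$-algebra, so that $\lambda(\cO_{\hat C}')$ is necessarily a singleton and $\lambda(\cO_{\hat C}') = (3)$ is equivalent to $|\lambda(\cO_{\hat C}')| = 3$. Indeed, $\cO_{\hat C}' = \bC \oplus t\ic{\cO}_{\hat C}$ as $\bC$-vector spaces (with $\bC$ the constants and $t\ic{\cO}_{\hat C} \cap \cO_{\hat C} = \mFm_{\hat C}$), and $\mFm_{\hat C}\cO_{\hat C}' \subseteq t\ic{\cO}_{\hat C}$; using the relation $\mFm_{\hat C}\ic{\cO}_{\hat C} = \theta\ic{\cO}_{\hat C}$, one sees that $\mFm_{\hat C}\cO_{\hat C}' \supseteq t\theta\ic{\cO}_{\hat C} = \prod_j t_j^{m_j + 1}\bC\{t_j\}$, which absorbs $(t\ic{\cO}_{\hat C})^k$ for all $k \ge \max_j m_j + 1$. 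Hence the image of $t\ic{\cO}_{\hat C}$ in the quotient is a nilpotent maximal ideal.

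To translate the remaining condition into the weight function, I would apply Lemma \ref{lem:quotlen} to $\hat C$ (with $I = t\ic{\cO}_{\hat C}$) and combine with (\ref{eq:addwt}) and (\ref{eq:multw}) exactly as in the proof of Lemma \ref{lem:o2awt}, obtaining $|\lambda(\cO_{\hat C}')| = (|\hat m| - \hat r + 4 - w_0^{\hat C}(\hat m + \hat e))/2$. In particular, the inequality $w_0^{\hat C}(\hat m + \hat e) \ge |\hat m| - \hat r$ is equivalent to $|\lambda(\cO_{\hat C}')| \le 2$. To upgrade ``$\lambda(\cO_{\hat C_i}') \ne (3)$'' to this stronger form, I must exclude the possibility $|\lambda(\cO_{\hat C_i}')| \ge 4$; for this I would use the surjection $\cO \twoheadrightarrow \cO_{\hat C_i}$ induced by restriction (whose kernel is the minimal prime corresponding to $C_i$), which extends to a surjection $\cO'/\mFm\cO' \twoheadrightarrow \cO_{\hat C_i}'/\mFm_{\hat C_i}\cO_{\hat C_i}'$, yielding $|\lambda(\cO_{\hat C_i}')| \le |\lambda(\cO')| \le 3$ by (O2a).

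For the moreover clause, Remark \ref{rem:emwtA} gives $w_0(m + e) \ge |m| - r$ for any $A_n$ germ, so by Proposition \ref{prop:domA} the desired inequality holds automatically whenever $|\hat m| \le 2$; since $|\hat m| = |m| - m_i \le 4 - m_i$ by (O1a), we have $|\hat m| \ge 3$ precisely when $|m| = 4$ and $m_i = 1$. The main obstacle I anticipate is the locality argument of the second step: while the statement ``the quotient is $\bC \oplus N$ with $N$ nilpotent'' is conceptually clean, verifying nilpotency requires careful bookkeeping on how high powers of $t\ic{\cO}_{\hat C}$ get absorbed into $\mFm_{\hat C}\cO_{\hat C}'$, which relies on the multiplicity-vector information carried by the relation $\mFm_{\hat C}\ic{\cO}_{\hat C} = \theta\ic{\cO}_{\hat C}$.
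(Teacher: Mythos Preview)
Your proposal is correct and follows essentially the same route as the paper: both arguments hinge on the idempotent decomposition $\cO_i' \cong \bC\{t_i\} \times \cO_i'(1-\epsilon_i)$, the observation that the second factor is local so that $\lambda(\cO_i') = (1,3)$ reduces (via (O1b)) to a length condition on that factor, the bound $|\lambda| \le 3$ coming from (O2a) via the quotient map from $\cO'$, and the translation into the weight inequality through Lemma~\ref{lem:quotlen} together with (\ref{eq:addwt}) and (\ref{eq:multw}). Your explicit identification $\cO_i'(1-\epsilon_i) = \cO_{\hat C_i}'$ and your nilpotency argument for locality are slightly more direct than the paper's quotient-ring presentation, but the substance is identical; the only omission is the trivial $r=1$ edge case, where both (O2b) and the subcurve condition hold vacuously.
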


\begin{proof}
    If $r = 1$, we find that both sides of our claimed equivalence are trivially satisfied, since in this case there are no such subcurve germs and $\cO_i' = \ic{\cO} \cong \bC\{t_1\}$, which gives $\lambda(\cO_i') = \lambda(\ic{\cO}) = m = (m_1) \ne (1, 3)$.
    
    Therefore, we can suppose henceforth that $r \ge 2$. Fix an index $1 \le i \le r$ to show that $\vec \lambda(\cO_i') \ne (1, 3)$ if and only if our stated weight condition holds for $(\hat C, o) = (\hat C_i, o) := \bigcup_{j \ne i} (C_i, o)$. By considering the idempotent $\epsilon_i \in \cO_i'$, we obtain a product decomposition $\cO_i' \cong \cO_i'\epsilon_i \times \cO_i'(1 - \epsilon_i)$. In particular, this allows us to decompose $\cO_i'/\mFm\cO_i'$ as the product of $\frac{\cO_i'\epsilon_i}{\mFm\cO_i'\epsilon_i}$ and $\frac{\cO_i'(1-\epsilon_i)}{\mFm\cO_i'(1-\epsilon_i)}$. Observe that $\cO_i'$ contains the ideal $(\epsilon_i) = \bC\{t_i\}\epsilon_i$ of $\ic{\cO}$ generated by the idempotent $\epsilon_i$, so that $\cO_i'\epsilon_i = \ic{\cO}\epsilon_i \cong \bC\{t_i\}$ and, indeed, the ring map $\cO \to \cO_i'\epsilon_i$ factors as $\cO \hookrightarrow \ic{\cO} \twoheadrightarrow \ic{\cO}\epsilon_i \cong \bC\{t_i\}$. Hence $\frac{\cO_i'\epsilon_i}{\mFm\cO_i'\epsilon_i} \cong \bC\{t_i\}/({t_i}^{m_i})$ is an Artinian local $\cO$-algebra with length equal to the multiplicity $m_i$ of $(C_i,o)$.
    
    Moreover, since we also obtain the equality $\cO_i' = \cO + t\ic{\cO} + \bC\epsilon_i = \cO + t\ic{\cO} + \ic{\cO}\epsilon_i$, we have an identification $\cO_i'(1-\epsilon_i) \cong \cO_i'/(\cO_i'\epsilon_i) \cong (\cO + t\ic{\cO} + \ic{\cO}\epsilon_i)/(\ic{\cO}\epsilon_i)$; observe that this is a local ring with maximal ideal $(t\ic{\cO} + \ic{\cO}\epsilon_i)/(\ic{\cO}\epsilon_i)$. Thus the quotient $\frac{\cO_i'(1 - \epsilon_i)}{\mFm\cO_i'(1 - \epsilon_i)}$ is an Artinian local $\cO$-algebra as well; its length and $m_i$, in non-decreasing order, are hence the two entries of the vector $\lambda(\cO_i')$.
    
    By Lemma \ref{lem:o1bwt} and (\ref{eq:multw}), Condition (O1b) guarantees that $m_i \le 2$. If $m_i = 2$, then we can see immediately that $\lambda(\cO_i') \ne (1, 3)$ and so Condition (O2b) is satisfied, while our weight condition holds since $(\hat C, o)$ has multiplicity $|m| - m_i = |m| - 2 \le 4 - 2 = 2$ by Condition (O1a) and hence we can apply the bound of Remark \ref{rem:emwtA} to it. (Indeed, we can see in this way that the weight condition also holds automatically if $|m| < 4$, which verifies our second claim.) Thus we only have to deal with the case $m_i = 1$, in which the condition $\lambda(\cO_i') \ne (1, 3)$ becomes equivalent to the assertion $\dimlen \frac{\cO_i'(1 - \epsilon_i)}{\mFm\cO_i'(1 - \epsilon_i)} \ne 3$. Since $\cO_i'(1 - \epsilon_i) \cong \cO'/(\ic{\cO}\epsilon_i \cap \cO')$ is a quotient of $\cO'$ and so $\frac{\cO_i'(1-\epsilon_i)}{\mFm\cO_i'(1-\epsilon_i)}$ is a quotient of $\cO'/\mFm\cO'$, Condition (O2a) tells us that this length is bounded above by 3, so in fact we can here restate $\lambda(\cO_i') \ne (1, 3)$ as $\dimlen \frac{\cO_i'(1-\epsilon_i)}{\mFm\cO_i'(1-\epsilon_i)} \le 2$; this inequality can, by applying an argument similar to the one used to prove Lemma \ref{lem:quotlen}, be further simplified to $\dimlen \frac{\cO + t\ic{\cO} + \ic{\cO}\epsilon_i}{\cO + t\mFm\ic{\cO} + \ic{\cO}\epsilon_i} \le 1$.
    
    Observe now that the quotient ring $\ic{\cO}/\epsilon_i\ic{\cO} \cong \prod_{j \ne i}\bC\{t_j\}$ is precisely the integral closure of the local convergent power series ring of $(\hat C, o)$ in its total fraction ring; in particular, we can identify the ideals of $\ic{\cO}$ containing $\ic{\cO}\epsilon_i$ with the ideals of this integral closure. Under this identification, we have $\ic{\cF}(\hat m) = \mFm\ic{\cO} + \ic{\cO}\epsilon_i$ and $\ic{\cF}(\hat e) = t\ic{\cO} + \ic{\cO}\epsilon_i$; hence, by (\ref{eq:addwt}) and (\ref{eq:multw}), we can see that $$w_0^{\hat C}(\hat m + \hat e) = w_0^{\hat C}(\hat e) + |\hat m| - 2\dimlen \tfrac{\cO + t\ic{\cO} + \ic{\cO}\epsilon_i}{\cO + t\mFm\ic{\cO} + \ic{\cO}\epsilon_i} = |\hat m| + 2 - \hat r - 2\dimlen \tfrac{\cO + t\ic{\cO} + \ic{\cO}\epsilon_i}{\cO + t\mFm\ic{\cO} + \ic{\cO}\epsilon_i}.$$ Thus we find that the condition $\lambda(\cO_i') \ne (1, 3)$ is in fact equivalent to the inequality $w_0^{\hat C}(\hat m + \hat e) \ge |\hat m| - \hat r$, as desired.
\end{proof}

\begin{lem} \label{lem:o3wt}
    Suppose that $|m| = 3$. Then $(C, o)$ satisfies (O3) if and only if $w_0^C(2m + e) \ge w_0^C(m + e) + 1$.
\end{lem}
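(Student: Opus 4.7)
The plan is to follow the same template as the proofs of Lemmas \ref{lem:o1bwt}, \ref{lem:o2awt}, and \ref{lem:o2bwt}: reinterpret the length condition (O3) via Lemma \ref{lem:quotlen}, identify the relevant subrings of $\ic{\cO}$ with pieces of the lattice filtration, and then apply (\ref{eq:addwt}) to convert the resulting length inequality into the desired weight inequality.

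With the hypothesis $|m| = 3$ in place, Condition (O3) is by definition the inequality $\dimlen \cO''/\mFm\cO'' \le 2$, where $\cO'' = \theta t\ic{\cO} + \cO$. Applying Lemma \ref{lem:quotlen} with the ideal $I := \theta t\ic{\cO}$ of $\ic{\cO}$ turns this into the equivalent statement
\[ \dimlen \tfrac{\theta t\ic{\cO} + \cO}{\mFm\theta t\ic{\cO} + \cO} \le 1. \]
Next, I would identify the two ideals $\theta t \ic{\cO}$ and $\mFm \theta t \ic{\cO}$ with pieces of $\ic{\cF}$. Since $\theta \ic{\cO} = \mFm \ic{\cO} = \ic{\cF}(m)$ and $t \ic{\cO} = \ic{\cF}(e)$ (per the setup in Proposition \ref{prop:tameoc} together with Proposition \ref{prop:lattfiltfacts}), the product formula in Proposition \ref{prop:lattfiltfacts} yields
\[ \theta t \ic{\cO} = (\theta \ic{\cO})(t\ic{\cO}) = \ic{\cF}(m) \cdot \ic{\cF}(e) = \ic{\cF}(m + e). \]
For the other ideal, the analogous computation gives $\mFm \theta t \ic{\cO} = (\mFm\ic{\cO}) \cdot (\theta t \ic{\cO}) = \ic{\cF}(m) \cdot \ic{\cF}(m + e) = \ic{\cF}(2m + e)$; the inclusion $\subseteq$ is immediate from $\mFm \subseteq \mFm\ic{\cO}$, while the reverse uses that $\theta$ is an $\ic{\cO}$-generator of $\ic{\cF}(m)$ to write any element of $\ic{\cF}(2m+e) = \theta \cdot \ic{\cF}(m+e)$ as $\theta$ times an element of $\ic{\cF}(m+e)$.

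Finally, substituting $\ell := m + e$ and $\ell' := m$ into (\ref{eq:addwt}) produces
\[ w_0^C(2m + e) - w_0^C(m + e) = |m| - 2\dimlen \tfrac{\ic{\cF}(m+e) + \cO}{\ic{\cF}(2m+e) + \cO} = 3 - 2\dimlen \tfrac{\theta t \ic{\cO} + \cO}{\mFm \theta t \ic{\cO} + \cO}, \]
so the length bound $\le 1$ on the right-hand quotient is equivalent to $w_0^C(2m+e) - w_0^C(m+e) \ge 1$, concluding the chain of equivalences.

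The argument is essentially a short translation, and the only place where one has to pay any attention is in the identification $\mFm \theta t \ic{\cO} = \ic{\cF}(2m+e)$; the hypothesis that $\theta$ generates $\mFm \ic{\cO}$ as an $\ic{\cO}$-ideal (rather than merely being an element of $\mFm$) is what makes the nontrivial inclusion work, and this is the only step where the specific choice of $\theta$ plays a role.
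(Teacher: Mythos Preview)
Your proof is correct and follows essentially the same approach as the paper's own proof: both apply Lemma \ref{lem:quotlen} to $\cO'' = \theta t\ic{\cO} + \cO$, identify the resulting ideals with $\ic{\cF}(m+e)$ and $\ic{\cF}(2m+e)$, and finish via (\ref{eq:addwt}). Your treatment is slightly more detailed in justifying the identification $\mFm\theta t\ic{\cO} = \ic{\cF}(2m+e)$, but otherwise the arguments coincide.
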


\begin{proof}
   Since $\cO'' = t\mFm\ic{\cO} + \cO$, Lemma \ref{lem:quotlen} tells us that Condition (O3) is, under our hypothesis $|m| = 3$, equivalent to the inequality $\dimlen \frac{t\mFm \ic{\cO} + \cO}{t\mFm^2\ic{\cO} + \cO} \le 1$. Noting that $\ic{\cF}(m + e) = t\mFm\ic{\cO}$ and $\ic{\cF}(2m + e) = t\mFm^2\ic{\cO}$, we can use (\ref{eq:addwt}) to see that this condition holds if and only if $w_0^C(2m + e) - w_0^C(m + e) \ge |m| - 2$; since $|m| - 2 = 3 - 2 = 1$, the result follows.
\end{proof}

Using the result of Subsection \ref{subsec:lattincl}, we can collect the preceding lemmas into a single result which characterizes the overring conditions of Proposition \ref{prop:tameoc} in terms of the weight function $w_0^C$ of $(C, o)$ itself (note in particular that we no longer need to appeal to explicit information about any subcurve or overring other than the normalization):

\begin{prop} \label{prop:tametypeptwts}
    Suppose that $(C, o)$ is of infinite CM type. Then $(C,o)$ is tame if and only if all of the following conditions hold:
    \begin{enumerate}[label=\normalfont(\wtprint{\value*}), align=left, itemsep=.5mm]
        \item $w_0^C(m) \ge -2$.
        
        \item $w_0^C(m_ie^i) \ge 0$ for all $1 \le i \le r$.
        
        \item $w_0^C(m + e) \ge |m| - r - 2$.
        
        \item $w_0^C(m - m_ie^i + e - e^i) \ge |m| - m_i - r + 1$ for all $1 \le i \le r$.
        
        \item If $|m| = 3$, then $w_0^C(2m + e) \ge w_0^C(m + e) + 1$.
    \end{enumerate}
\end{prop}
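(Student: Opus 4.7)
The plan is to deduce the proposition directly from Proposition \ref{prop:tameoc} by translating each of the overring conditions (O1a)--(O3) into the corresponding weight condition (W1a)--(W3). Since tameness of an infinite-CM-type germ is, by that proposition, equivalent to the conjunction (O1a) $\wedge$ (O1b) $\wedge$ (O2a) $\wedge$ (O2b) $\wedge$ (O3), it suffices to show that, in the presence of the preceding conditions, each overring condition matches its weight counterpart. All the substantive work has already been carried out in Lemmas \ref{lem:o1bwt}--\ref{lem:o3wt}; the remaining task is bookkeeping, principally the identification of the relevant subcurve weights with weights on the full lattice $\bN^r$ via Subsection \ref{subsec:lattincl}.

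First, I would handle (O1a) $\Leftrightarrow$ (W1a) directly by the identity $|m| = 2 - w_0^C(m)$ from (\ref{eq:multw}), without needing any lemma. Next, assuming (O1a), Lemma \ref{lem:o1bwt} reduces (O1b) to the inequalities $w_0^{C_i}(m_i) \ge 0$ for all $1 \le i \le r$; since $(C_i, o)$ corresponds to the sublattice $\bN e^i \subset \bN^r$, the identification of Subsection \ref{subsec:lattincl} gives $w_0^{C_i}(m_i) = w_0^C(m_i e^i)$, yielding (W1b). Lemma \ref{lem:o2awt} takes care of (O2a) $\Leftrightarrow$ (W2a) with no further translation required.

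For (O2b) $\Leftrightarrow$ (W2b), assuming (O1a), (O1b), (O2a) are in force, Lemma \ref{lem:o2bwt} reduces (O2b) to the collection of inequalities $w_0^{\hat C_i}(\hat m_i + \hat e_i) \ge |\hat m_i| - \hat r_i$ indexed by the subcurves $(\hat C_i, o) = \bigcup_{j \ne i} (C_j, o)$. In the ambient lattice $\bN^r$, the multiplicity vector $\hat m_i$ embeds as $m - m_ie^i$, the all-ones vector $\hat e_i$ as $e - e^i$, and we have $|\hat m_i| = |m| - m_i$ and $\hat r_i = r - 1$; the subcurve identity again reads off $w_0^{\hat C_i}(\hat m_i + \hat e_i)$ as $w_0^C(m - m_ie^i + e - e^i)$, giving (W2b) precisely. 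Finally, (O3) only needs to be imposed when $|m| = 3$ per Proposition \ref{prop:tameoc}, and Lemma \ref{lem:o3wt} converts it to (W3) under that hypothesis.

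No real obstacle is expected: every equivalence has already been established in the preceding lemmas, and the translation is mechanical once one is careful about (a) verifying that the preceding conditions required by each lemma are available by induction on the order in which the conditions are assumed, and (b) matching the sublattice coordinates to full-lattice coordinates correctly in the cases of (W1b) and (W2b). The only point meriting a sentence of commentary is that in Lemma \ref{lem:o2bwt} the weight inequality is automatically satisfied for indices $i$ with $m_i \ge 2$ (or whenever $|m| < 4$), so one may restrict (W2b) to the relevant indices, recovering the formulation of Remark \ref{rem:altcond} later.
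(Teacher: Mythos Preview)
Your proposal is correct and follows exactly the same approach as the paper, which dispatches the proposition in one sentence: ``Immediate from (\ref{eq:multw}), the preceding lemmas, and the inclusion of Subsection \ref{subsec:lattincl}.'' You have simply unpacked this into the natural step-by-step translation, correctly noting the role of Subsection \ref{subsec:lattincl} in converting the subcurve weights of Lemmas \ref{lem:o1bwt} and \ref{lem:o2bwt} into weights on the full lattice.
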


\begin{proof}
    Immediate from (\ref{eq:multw}), the preceding lemmas, and the inclusion of Subsection \ref{subsec:lattincl}.
\end{proof}

As noted in Lemma \ref{lem:o2bwt}, Condition (W2b) is immediate if $|m| < 4$ and can be further simplified to pertain only to those indices $i$ such that $m_i = 1$.

\begin{rem}
    If $(C, o)$ is tame, then $r \ge 2$. This fact follows already from Proposition \ref{prop:tamedom}, but also from (W1b) above (combined with Proposition \ref{prop:domA}).
\end{rem}

\section{Curves of tame Cohen-Macaulay type. Spectral reformulation}
\label{sec:tcmt_spect}

Having characterized tameness through the behavior of the weight function $w_0$, we now simplify further by reformulating it in homological terms---using the analytic lattice homology, where possible, and the associated spectral sequence, when extra information is needed. We can also rephrase the finite growth condition of Definition \ref{def:fingrow} in this language; together, these arguments will result in a proof of Theorem \ref{thm:tame}.

As usual, let $(C, o)$ be a reduced complex-analytic curve germ; we use the notations of Section \ref{sec:setup} and the machinery of Subsection \ref{subsec:minspect}.

\subsection{Tameness}

\begin{prop} \label{prop:tamewt}
    Suppose $(C, o)$ is of tame CM type. Then $\min w_0^C = -2$.
\end{prop}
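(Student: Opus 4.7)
The proof plan has two inequalities to establish, and both should follow almost immediately from results already in hand.

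\textbf{Lower bound.} The plan is to invoke Proposition \ref{prop:tamedom}: since $(C,o)$ is tame, it birationally dominates at least one $T_{pq}$ germ, say $(C',o')$. By Lemma \ref{lem:FEAT}(g), birational dominance gives the inequality $w_0^C(\ell) \ge w_0^{C'}(\ell)$ for every lattice point $\ell$ (the normalizations, hence the lattices, are identified by Lemma \ref{lem:FEAT}(c)). Taking the minimum over $\ell$ then yields $\min w_0^C \ge \min w_0^{T_{pq}}$, and Proposition \ref{prop:minT} provides $\min w_0^{T_{pq}} \ge -2$.

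\textbf{Upper bound.} The plan is to contrapose Theorem \ref{thm:fintypewtbd}. Since $(C,o)$ is tame, it is in particular not of finite CM type (tame and finite are disjoint by the tame-wild dichotomy, or directly from Definitions; indeed tame requires infinitely many indecomposable MCM modules). Thus by Theorem \ref{thm:fintypewtbd} we must have $\min w_0^C < -1$. As $w_0^C$ is integer-valued, this forces $\min w_0^C \le -2$.

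Combining the two inequalities gives $\min w_0^C = -2$.

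\textbf{Where the difficulty lies.} There is essentially no difficulty once Proposition \ref{prop:minT} and Theorem \ref{thm:fintypewtbd} are in hand — the argument is a two-line sandwich. The genuine work was already done in Section \ref{sec:comput} and Proposition \ref{prop:minT} (the case analysis on the multiplicity and tangent cone of $T_{pq}$ germs), and in Theorem \ref{thm:fintypewtbd} (the algebraic/semigroup argument deriving the weight bound from the overring criterion for finite CM type). Nothing further is required here.
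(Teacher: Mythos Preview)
Your proof is correct and follows essentially the same approach as the paper: invoke Proposition \ref{prop:tamedom} together with Lemma \ref{lem:FEAT}(g) and Proposition \ref{prop:minT} for the lower bound, and Theorem \ref{thm:fintypewtbd} (using that tame implies infinite CM type) for the upper bound.
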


\begin{proof}
    By Proposition \ref{prop:tamedom}, $(C, o)$ dominates a $T_{pq}$ germ for some $p \le q$ with $1/p + 1/q \le 1/2$; hence $\min w_0^C \ge \min w_0^{T_{pq}} \ge -2$
    by Lemma \ref{lem:FEAT}(g) and Proposition \ref{prop:minT}. On the other hand, 
    $\min w_0^C \le -2$ by Theorem \ref{thm:fintypewtbd} since $(C, o)$ is of infinite CM type.
\end{proof}

\begin{prop} \label{prop:tamecycle}
    Suppose that $\min w_0^C = -2$ and $r \ge 2$. Then $(C, o)$ satisfies Conditions (O2a) and (O3) of Proposition \ref{prop:tameoc} if and only if $\mFM_{1,-1}(C, o) \ne 0$.
    
    Moreover, these conditions always hold if $|m| = 4$ and $r > 2$.
\end{prop}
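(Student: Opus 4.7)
The plan is to reduce the equivalence to two cases via $|m|$ and then, in each, to combine the weight reformulations of (O2a) and (O3) from Lemmas \ref{lem:o2awt} and \ref{lem:o3wt} with the combinatorial description of minimal spectral 1-cycles extracted from the proof of Lemma \ref{lem:filtcycstruct}. By (\ref{eq:multw}), $w_0^C(m) = 2 - |m|$, so $\min w_0^C = -2$ bounds $|m| \le 4$; Proposition \ref{prop:domA} excludes $|m| \le 2$, leaving $|m| \in \{3, 4\}$. Note that the memberships $m, 2m \in \cS$, together with (\ref{eq:wtdiff}), automatically fix $w_0^C(m + e^i) = w_0^C(m) + 1$ and (once $w_0^C(2m) = -2$ is known) $w_0^C(2m + e^i) = w_0^C(2m) + 1$ for every $i$; it is this rigidity that connects the existence of a minimal spectral 2-face to the scalar inequalities appearing in (O2a) and (O3).

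For $|m| = 4$, Condition (O3) is vacuous and Lemma \ref{lem:o2awt} rewrites (O2a) as $w_0^C(m + e) \ge 2 - r$. By Examples \ref{ex:mincycs}, $\mFM_{1,-1}(C, o) = (E^1_{-m, 5})_2$, and following the proof of Lemma \ref{lem:filtcycstruct} this is nonzero precisely when some pair $i_0 < i_1 \le r$ satisfies $w_0^C(m + e^{i_0} + e^{i_1}) = 0$. One direction is immediate: the $r - 2$ remaining unit steps from $m + e^{i_0} + e^{i_1}$ to $m + e$ each change the weight by $\pm 1$. The converse for $r = 2$ is by parity (forcing $w_0^C(m + e) \in \{-2, 0\}$); for $r \ge 3$ one argues by contradiction, assuming every pair had weight $-2$ and applying the matroid inequality (\ref{eq:matroid}) to $\ell = m + e^i + e^j$ and $\ell' = m + e^j + e^k$ for distinct $i, j, k$ (so that $\min\{\ell, \ell'\} = m + e^j$ and $\max\{\ell, \ell'\} = m + e^i + e^j + e^k$) to obtain
\[
 -4 \;\ge\; w_0^C(m + e^j) + w_0^C(m + e^i + e^j + e^k) \;=\; -1 + w_0^C(m + e^i + e^j + e^k),
\]
which produces a lattice point of weight at most $-3$, contradicting $\min w_0^C = -2$.

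For $|m| = 3$, $\mFM_{1,-1}(C, o) = (E^1_{-2m, 7})_2$, and the same analysis characterizes its nonvanishing by the conjunction of $w_0^C(2m) = -2$ and the existence of a pair $i_0 < i_1$ with $w_0^C(2m + e^{i_0} + e^{i_1}) = 0$. Lemmas \ref{lem:o2awt} and \ref{lem:o3wt} reformulate (O2a) and (O3) as $w_0^C(m + e) \ge 1 - r$ and $w_0^C(2m + e) \ge w_0^C(m + e) + 1$ respectively. The forward direction reads both inequalities off the implied weight profile along a path $2m \to 2m + e^{i_0} \to 2m + e^{i_0} + e^{i_1} \to \cdots \to 2m + e$ (combined with the relationship between $w_0^C(m + e)$ and $w_0^C(2m + e)$ coming from (\ref{eq:addwt}) and (\ref{eq:wtdiff})). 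For the converse one enumerates the a priori possibilities $w_0^C(2m) \in \{-2, 0, 2\}$, using even parity, reachability from $w_0^C(m) = -1$ in $|m| = 3$ steps, and $\min w_0^C \ge -2$ to narrow the list, then applies the matroid-based argument inside the rectangle $R(2m, 2m + e)$ exactly as in the $|m| = 4$ case to rule out every 2-face at $2m$ having weight $-2$.

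The moreover claim follows immediately: if $|m| = 4$ and $r > 2$, then (O3) is vacuous and the matroid argument above rules out every 2-face at $m$ having weight $-2$, so $\mFM_{1,-1}(C, o) \ne 0$ and hence (O2a) by the equivalence just established. The principal obstacle I anticipate is the converse in the $|m| = 3$ case: showing from (O2a), (O3), and $\min w_0^C = -2$ alone that $w_0^C(2m) = -2$, which requires careful bookkeeping of parities, semigroup memberships, and iterated applications of (\ref{eq:matroid}) in order to exclude the scenarios $w_0^C(2m) \in \{0, 2\}$ while all three stated inequalities hold.
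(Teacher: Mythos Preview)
Your overall plan matches the paper's, but there is a genuine gap in your characterization of $\mFM_{1,-1}(C,o)$. Lemma~\ref{lem:filtcycstruct} only gives one implication: nonvanishing forces the existence of a pair $i_0<i_1$ with $w_0^C(jm+e^{i_0}+e^{i_1})=0$. The converse is \emph{not} what its proof says. The relative group $H_1(S_{-1}\cap B,S_{-1}\cap A)$ has as $1$-cycles all $\sum_i a_i[jm,jm+e^i]$ with $\sum a_i=0$, and as $1$-boundaries the differences $[jm,jm+e^i]-[jm,jm+e^{i'}]$ for pairs $\{i,i'\}$ whose $2$-face lies in $S_{-1}$ (i.e.\ top weight $-2$). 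Thus $\mFM_{1,-1}=0$ is equivalent to the graph on $\{1,\dots,r\}$ with edges the weight-$(-2)$ pairs being \emph{connected}, not complete. In particular, for $r\ge 3$ one can have $\mFM_{1,-1}=0$ while some $2$-face still has weight $0$, so your contrapositive ``$\mFM_{1,-1}=0\Rightarrow$ every pair has weight $-2$'' fails. The paper's fix is exactly the right one: connectivity on $r\ge 3$ vertices forces two weight-$(-2)$ edges sharing a vertex, and your matroid computation (which only needs two adjacent edges, not all of them) then gives a point of weight $\le -3$, contradicting $\min w_0^C=-2$. The same correction is needed at $2m$ in the $|m|=3$ case.

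Two further points where the paper is sharper than your sketch. First, for $|m|=3$ the paper observes at the outset that (O2a) is \emph{automatic}: $w_0^C(m+e^i)=0$ for all $i$ gives $w_0^C(m+e)\ge 1-r$ directly, so the whole equivalence reduces to (O3). It also pins down $w_0^C(m+e)=1-r$ \emph{exactly} (from $w_0^C(2m)=-2$, by $2m=m+e+e^1$ when $r=2$ and $m=e$ when $r=3$), which you need in order to compare $w_0^C(2m+e)\ge 2-r$ with the (O3) inequality; your reference to (\ref{eq:addwt}) here is vague and does not by itself give the required upper bound on $w_0^C(m+e)$. Second, your ``principal obstacle''---showing $w_0^C(2m)=-2$ from $\min w_0^C=-2$ alone---is handled in the paper in one line: were $w_0^C(2m)\ge -1$, Proposition~\ref{prop:fintypeptwts} and Theorem~\ref{thm:fintypewtbd} would force $\min w_0^C\ge -1$. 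Your proposed route via parities and iterated matroid inequalities would essentially reprove the third proof of Theorem~\ref{thm:fintypewtbd}; it can be made to work, but there is no reason not to cite that result.
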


\begin{proof}
    By the hypothesis $\min w_0^C = -2$ and (\ref{eq:multw}), $(C, o)$ satisfies (O1a)---that is, $|m| \le 4$. On the other hand, by the same hypothesis and Proposition \ref{prop:domA}, we find that $|m| > 2$; hence we have $|m| \in \{3, 4\}$. We will pursue these cases separately.
    
    Suppose first that $|m| = 4$, which gives us $w_0^C(m) = -2$ by (\ref{eq:multw}). Then (O3) is trivially satisfied, so we seek to prove the equivalence of (O2a) with the existence of a minimal spectral 1-cycle of weight $-1$. Note by Lemmas \ref{lem:filtcycstruct} and \ref{lem:spectvan} that, in this case, $\mFM_{1,-1}(C, o) \ne 0$ implies the existence of indices $1 \le i < j \le r$ such that $w_0^C(m + e^i) = w_0^C(m + e^j) = -1$ and $w_0^C(m + e^i + e^j) = 0$. (\ref{eq:wtdiff}) then implies that $w_0^C(m + e) \ge w_0^C(m + e^i + e^j) - (r - 2) = 2 - r = |m| - r - 2$, and hence  $(C, o)$ satisfies (O2a) by Lemma \ref{lem:o2awt}.
    
    On the other hand, if we suppose that $\mFM_{1,-1}(C, o) = 0$, we can use cellular computations as in the proof of Lemma \ref{lem:filtcycstruct} to obtain constraints on the weights of lattice points implying that Condition (O2a) fails. We work by cases; note that $r \le |m| = 4$ and so $r \in \{2, 3, 4\}$ by our hypothesis on $r$. If $r = 2$, then by Lemma \ref{lem:o2awt} Condition (O2a) is equivalent to $w_0^C(m + e^1 + e^2) \ge 0$, which must fail since otherwise the two 1-cubes rooted at $m$ would produce a cycle in $(E_{-{\bf m},5})_{-1} = (E_{-4,5})_{-1} = \mFM_{1,-1}(C, o)$. If $r \in \{3, 4\}$, on the other hand, $\mFM_{1,-1}(C, o) = 0$ already contradicts our hypothesis on $\min w_0^C$. If this group is zero, there must exist a pair of 2-cubes of weight at most $-1$ rooted at $m$ which share a lower face, since otherwise we could again produce a relative cycle by considering a pair of 1-cubes (which have weight $-1$ by (\ref{eq:wtdiff}) and the inclusion $m \in \cS$) rooted at $m$ not connected by any sequence of such 2-cubes. Hence, up to a reordering of coordinates, there must exist indices $1 \le i < j < k \le r$ such that $w_0^C(m + e^i + e^j) = w_0^C(m + e^j + e^k) = -2$ by (\ref{eq:wtdiff}). Therefore $w_0^C(m + e^i + e^j + e^k) \le w_0^C(m + e^i + e^j) + w_0^C(m + e^j + e^k) - w_0^C(m + e^j) = -2 - 2 - (-1) = -3$ by (\ref{eq:matroid}), giving the desired contradiction; thus the failure of Condition (O2a) follows by explosion. (In particular, as noted, (O2a) and (O3) hold automatically when $|m| = 4$ and $r > 2$.)
    
    Now suppose that $|m| = 3$, so that $w_0^C(m) = -1$ by (\ref{eq:multw}). Then, by (\ref{eq:wtdiff}) and $m \in \cS$, we have $w_0^C(m + e^i) = 0$ for all $1 \le i \le r$, hence $w_0^C(m + e) \ge w_0^C(m + e^1) - (r - 1) = 1 - r$. Thus Condition (O2a) is already satisfied by Lemma \ref{lem:o2awt}, so we must prove that the existence of our 1-cycle is equivalent to Condition (O3). Now, by Proposition \ref{prop:fintypeptwts} and Theorem \ref{thm:fintypewtbd}, we must have $w_0^C(2m) < -1$, since otherwise we would find $\min w_0^C \ge -1$ and so contradict our hypothesis $\min w_0^C = -2$; hence, this hypothesis guarantees that $w_0^C(2m) = -2$. Moreover, we can see as a consequence that in fact $w_0^C(m + e) = 1 - r$ exactly, by the following reasoning. Since $r \le |m| = 3$ and so $r \in \{2, 3\}$ by the hypothesis that $C$ have more than one branch, we have either $m = (2, 1)$ (up to a reordering of coordinates) or $m = (1, 1, 1)$. In the former case, we have $w_0^C(2m) = w_0^C(m + e + e^1) \ge w_0^C(m + e) - 1$ by (\ref{eq:wtdiff}), so $w_0^C(2m) = -2$ implies $w_0^C(m + e) \le -1 = 1 - r$. In the latter case, $e = m$ and so $w_0^C(m + e) = w_0^C(2m) = -2 = 1 - r$. As before, (\ref{eq:wtdiff}) and the containment $2m \in \cS$ then give us $w_0^C(2m + e^i) = -1$ for each $1 \le i \le r$, and so we can apply Lemmas \ref{lem:filtcycstruct} and \ref{lem:spectvan} to see that the existence of a minimal spectral 1-cycle of weight $-1$ implies that of a pair $1 \le i < j \le r$ of indices such that $w_0^C(2m + e^i + e^j) = 0$. If $\mFM_{1,-1}(C, o) \ne 0$, we thus have $w_0^C(2m + e) \ge w_0^C(2m + e^i + e^j) - (r - 2) = 2 - r = (1 - r) + 1 = w_0^C(m + e) + 1$, so Condition (O3) holds by Lemma \ref{lem:o3wt}. On the other hand, if $\mFM_{1,-1}(C, o) = 0$, we can see by our usual explicit cellular computation of the relevant relative homology group that, in the $r = 2$ case, $w_0^C(2m + e^1 + e^2) \le -1$ must hold and so $w_0^C(2m + e) = w_0^C(2m + e^1 + e^2) < 0 = (1 - r) + 1 = w_0^C(m + e) + 1$. In the $r = 3$ case, $\mFM_{1,-1}(C, o) = 0$ implies that, up to a reordering of the coordinates, $w_0^C(2m + e^1 + e^2) = w_0^C(2m + e^2 + e^3) = -2$ by our prior reasoning in the $|m| = 4$, $r \in \{3, 4\}$ case; we can then conclude by (\ref{eq:matroid}) that $w_0^C(2m + e) = w_0^C(2m + e^1 + e^2 + e^3) \le w_0^C(2m + e^1 + e^2) + w_0^C(2m + e^2 + e^3) - w_0^C(2m + e^2) = -2 - 2 - (-1) = -3 < -1 = (1 - r) + 1 = w_0^C(m + e) + 1$. Hence in each case we find that Condition (O3) is not satisfied by Lemma \ref{lem:o3wt}, concluding our proof of the equivalence.
\end{proof}

\subsection{Finite growth}

\begin{prop} \label{prop:fgtomr}
    Suppose that $(C, o)$ is of tame CM type and finite growth. Then $\mFM_{1,-1}(C, o)$ is of maximal rank.
\end{prop}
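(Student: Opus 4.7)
The plan is to reduce to the two parabolic cases via birational dominance and then compute $\mFM_{1,-1}(C,o)$ directly from its chain-level description. Proposition \ref{prop:tamedom} guarantees that $(C,o)$ birationally dominates at least one parabolic germ $(T,o) \in \{T_{4,4}, T_{3,6}\}$; by Lemma \ref{lem:FEAT}(b), $r := r(C,o) = r(T,o) \in \{3, 4\}$. I would next argue that $m(C,o) = m(T,o) = (1, \ldots, 1)$: the inequality $m(C,o) \ge m(T,o)$ is Lemma \ref{lem:FEAT}(d), while the reverse follows because the containment $\cO_{T,o} \subseteq \cO_{C,o}$ forces $\mFm_T \ic\cO = t \ic\cO \subseteq \mFm_C \ic\cO$, so that $\mFv_i(\mFm_C) \le 1$ along each branch; combined with the singularity of $C$ at $o$ this gives $m_i(C,o) = 1$ for every $i$. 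In particular $r = |m|$, which is a necessary condition for $\mFM_{1,-1}(C,o)$ to even admit maximal rank.

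Set $j := 1$ when $|m|=4$ and $j := 2$ when $|m|=3$, so that $\mFM_{1,-1}(C,o) = (E^1_{-jm,\,1+j|m|})_2$. The excision argument from the proof of Lemma \ref{lem:filtcycstruct} identifies this with $H_1$ of the pair $\bigl(S_{-1}^C \cap R(jm, jm+e),\ S_{-1}^C \cap R(jm, jm+e) \setminus \{jm\}\bigr)$. I would now pin down the relevant vertex weights of $C$. First, $w_0^C(jm) = -2$: when $|m|=4$ this is (\ref{eq:multw}) with $\ell = m$; when $|m|=3$, the bound $w_0^C(2m) \ge w_0^T(2m) = -2$ from Lemma \ref{lem:FEAT}(g) combines with the failure of Condition (3) of Proposition \ref{prop:fintypeptwts} (since $(C,o)$ is of infinite CM type and $w_0^C(m) = -1$ by (\ref{eq:multw})) to force $w_0^C(2m) \le -2$, and parity then gives equality. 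Next, $w_0^C(jm + e^i) = -1$ for each $i$, by (\ref{eq:wtdiff}) and $jm \in \cS_C$. Finally, $w_0^C(jm + e^i + e^k) = 0$ for all $i \ne k$: by Lemma \ref{lem:FEAT}(g) it is $\ge w_0^T(jm + e^i + e^k) = 0$ (the latter read off from the tables in Section \ref{sec:comput}), while (\ref{eq:wtdiff}) applied at $jm + e^i$ (weight $-1$) forces it to be $\pm 1$ from $-1$, so exactly $0$.

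With the weights pinned down, the relative chain complex is minuscule. The only faces of $R(jm, jm+e)$ containing $jm$ that fit inside $S_{-1}^C$ are the vertex $\{jm\}$ itself and the $r$ edges $[jm,\, jm + e^i]$ (vertex weights $-2$ and $-1$); every face of dimension $\ge 2$ containing $jm$ also contains some vertex $jm + e^i + e^k$ of weight $0 > -1$ and is therefore excluded. Hence $C_0 \cong \bZ$, $C_1 \cong \bZ^r$, and $C_d = 0$ for $d \ge 2$. The relative boundary $\partial_1 : \bZ^r \to \bZ$ sends each edge generator to $-1$ (since $jm + e^i$ is killed in the quotient), so $\mFM_{1,-1}(C,o) \cong \ker \partial_1 \cong \bZ^{r-1} = \bZ^{|m|-1}$, which is precisely the maximal rank $\binom{|m|-1}{1}$ per Definition \ref{def:maxrk}.

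The main obstacle is the weight equality $w_0^C(jm) = -2$ when $|m| = 3$: the lower bound comes for free from dominance, but the matching upper bound depends on leveraging infinite CM type to exclude $w_0^C(2m) = 0$, which is the route through Proposition \ref{prop:fintypeptwts} sketched above. Once this single weight is fixed, both the remaining weight comparisons and the chain-complex computation are straightforward, and no direct use of the functoriality of the spectral sequence on $\mFM_{1,-1}$ is needed.
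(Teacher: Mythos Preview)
Your proof is correct and follows essentially the same route as the paper's: reduce to the parabolic cases via Proposition \ref{prop:tamedom}, pin down the weights at $jm$, $jm+e^i$, and $jm+e^i+e^k$ using dominance (Lemma \ref{lem:FEAT}(g)) together with infinite CM type (Proposition \ref{prop:fintypeptwts}), and then read off the rank from the relative chain complex. One small slip worth fixing: Lemma \ref{lem:FEAT}(d) actually gives $m(T,o) \ge m(C,o)$ (the \emph{dominated} germ has the larger multiplicity vector), which is precisely the inequality you then re-derive by hand via $\mFm_T \subseteq \mFm_C$; the opposite inequality $m(C,o) \ge e$ is just the trivial bound $m_i \ge 1$, so your conclusion $m = e$ is unaffected.
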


\begin{proof}
    By Proposition \ref{prop:tamedom}, $(C, o)$ birationally dominates either $T_{4, 4}$ or $T_{3, 6}$, and Lemma \ref{lem:FEAT}(b) then gives $r = 4$ in the former case and $r = 3$ in the latter; we deal with these separately.
    
    If $(C, o)$ dominates $T_{4, 4}$, then we have $|m| \le 4$ by Condition (O1a) of Proposition \ref{prop:tameoc} and $|m| \ge r = 4$, so $|m| = r = 4$ and thus necessarily $m = e = (1, 1, 1, 1)$. By (\ref{eq:multw}), $w_0^C(m) = -2$; using (\ref{eq:wtdiff}), Lemma \ref{lem:FEAT}(g), and the computations of Subsection \ref{subsec:t44}, we now find that, for each $I \subseteq \{1, \ldots, r\}$, $w_0^C(m + \sum_{i \in I} e^i) = |I| - 2$. A direct computation, as in the proof of Lemma \ref{lem:filtcycstruct}, now gives us $\rk \mFM_{1,-1} = \rk (E^1_{-4, 5})_2 = \rk (E^1_{-m, 5})_2 = 3$, as desired.
    
    If $(C, o)$ dominates $T_{3, 6}$, Lemma \ref{lem:FEAT}(b--d) and the fact that $|m| > 2$ (by tameness and, e.g., Proposition \ref{prop:domA}) guarantee that $m = (1, 1, 1)$, so $w_0^C(m) = -1$ by (\ref{eq:multw}); the fact that $(C, o)$ is of infinite CM type then guarantees that $w_0^C(2m) = -2$ by (\ref{eq:wtdiff}), $m \in \cS$, and Proposition \ref{prop:fintypeptwts}. As in the previous case, we can now use Lemma \ref{lem:FEAT}(g) and the results of Subsection \ref{subsec:t36} to show that $(E^1_{-6,7})_2$ obtains the maximal rank of 2.
\end{proof}

\begin{prop} \label{prop:mrtofg}
    Suppose that $(C, o)$ is of tame CM type and $\mFM_{1,-1}(C, o)$ is of maximal rank. Then $(C, o)$ is of finite growth.
\end{prop}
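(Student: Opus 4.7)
The plan is to reduce to an explicit realization of the dominance promised by Proposition \ref{prop:tamedom}: since $(C, o)$ is tame of finite growth precisely when it birationally dominates $T_{4, 4}$ or $T_{3, 6}$, it suffices to construct such a dominance from the maximal-rank hypothesis. Proposition \ref{prop:tamewt}, combined with Proposition \ref{prop:fintypeptwts} and the tame-CM hypothesis, forces $\min w_0^C = -2$ and $|m| \in \{3, 4\}$. The general bound $\rk \mFM_{k, n}(C, o) \le \binom{r-1}{k}$ recalled just before Definition \ref{def:maxrk} then makes maximality force $r = |m|$, so $m = e$ and $(C, o)$ consists of $|m|$ smooth branches meeting at $o$.

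I would handle the two cases $|m| = 4$ (targeting $T_{4, 4}$) and $|m| = 3$ (targeting $T_{3, 6}$) in parallel. In each, the generators-and-relations interpretation of $\mFM_{1, -1}(C, o)$ from the proof of Lemma \ref{lem:filtcycstruct}, combined with Lemma \ref{lem:spectvan}, turns the maximality of the rank into explicit constraints on $w_0^C$ at the lattice points $m + e^i + e^j$ (when $|m| = 4$) or at $2m + e^i + e^j$ (when $|m| = 3$, where Lemma \ref{lem:spectsemigp} also restricts the semigroup below $2m$). The resulting weight profile matches, branch by branch up to a relabeling, the one computed for $T_{4, 4}$ in Subsection \ref{subsec:t44} (resp.\ for $T_{3, 6}$ in Subsection \ref{subsec:t36}).

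Finally, following the pattern of the proof of Proposition \ref{prop:domD}, I would apply Lemma \ref{lem:eltfromwt} at each lattice point whose weight has been pinned down to extract concrete elements of $\cO \subseteq \ic{\cO} \cong \prod_{i=1}^r \bC\{t_i\}$ of prescribed leading form, then normalize via coordinate changes in each $\bC\{t_i\}$ (using the Henselian property of convergent power series rings) and subtraction of higher-order corrections until these elements become explicit generators of a subring of $\cO$ isomorphic to $\cO_{T_{4, 4}}$ or $\cO_{T_{3, 6}}$. Checking that the resulting embedding is of degree one over each component of the normalization gives the required birational dominance. The main obstacle is this last algebraic assembly step: in the $T_{3, 6}$ case the cycles sit at level $2m$ and one must handle the (innocuous, by \kdash equivalence) modality parameter, while in the $T_{4, 4}$ case one must verify that the weight-$(-2)$ corners of $R(m, m + e)$ forced by maximality appear in exactly the paired pattern of $T_{4, 4}$ rather than in some degeneration.
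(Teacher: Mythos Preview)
Your proposal is correct and follows essentially the same approach as the paper: deduce $|m|\in\{3,4\}$ and $r=|m|$ from maximal rank, read off the weights at the vertices of the relevant cube from the cellular description of $\mFM_{1,-1}$, and then use Lemma~\ref{lem:eltfromwt} (together with a generic linear function giving $x=(t_1,\ldots,t_r)$) to produce $x,y\in\cO$ generating a subring of type $T_{3,6}$ or $T_{4,4}$. The paper's execution is slightly slicker than you anticipate---rather than successive coordinate normalizations, it takes \emph{generic linear combinations} of the elements extracted via Lemma~\ref{lem:eltfromwt} to obtain $y$ with pairwise distinct leading coefficients directly, so the ``obstacles'' you flag (the modality parameter, a possible degenerate pattern of weight-$(-2)$ corners) do not actually arise.
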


\begin{proof}
    We take an approach similar to that of the proof of Proposition \ref{prop:domD}. Condition (O1a) of Proposition \ref{prop:tameoc} and Proposition \ref{prop:domA} give us $|m| \in \{3, 4\}$; we handle these cases separately.
    
    Suppose first that $|m| = 3$, and note that the maximality of the rank of $\mFM_{1,-1}(C, o) = (E^1_{-2m,7})_2$ implies that $r = 3$ as well and thus $m = e = (1, 1, 1)$. By the maximality of rank and the usual cellular computation of this relative homology group as in Lemma \ref{lem:filtcycstruct}, we find that $w_0^C(2m) = -2$, $w_0^C(2m + e^i) = -1$ for all $1 \le i \le r$, and $w_0^C(2m + e^i + e^j) = 0$ for all $1 \le i < j \le r$.
    
    Since $(C, o)$ has three smooth branches, we can see that the restriction $x$ to $(C, o)$ of a generic linear function germ on the ambient smooth germ satisfies $\mFv_1(x) = \mFv_2(x) = \mFv_3(x) = 1$; by taking an appropriate change of coordinates for $\ic{\cO} \cong \bC\{t_1\} \times \bC\{t_2\} \times \bC\{t_3\}$, we then have $x = (t_1, t_2, t_3) \in \cO$. Since $w_0^C((3, 2, 3)) = w_0^C((2, 2, 3)) + 1$, Lemma \ref{lem:eltfromwt} gives us an element $y_1 \in \cO$ with $\mFv_1(y_1) = 2$, $\mFv_2(y_1) \ge 2$, and $\mFv_3(y_1) \ge 3$, which we abbreviate by $\mFv(y_1) = (2, \ge 2, \ge 3)$. By the symmetry of our statements under coordinate permutation, we likewise obtain $y_2 \in \cO$ with $\mFv(y_2) = (\ge 2, 2, \ge 3)$; letting $y_{12} \in \cO$ be a generic linear combination of $y_1$ and $y_2$, we then have $\mFv(y_{12}) = (2, 2, \ge 3)$. Using symmetry again, we obtain $y_{23}, y_{31} \in \cO$ with $\mFv(y_{23}) = (\ge 3, 2, 2)$ and $\mFv(y_{31}) = (2, \ge 3, 2)$. We take $y \in \cO$ to be a generic linear combination of $y_{12}$, $y_{23}$, and $y_{31}$, so that, if $y = (\psi_1(t_1), \psi_2(t_2), \psi_3(t_3)) = (a_1{t_1}^2 + \cdots, a_2{t_2}^2 + \cdots, a_3{t_3}^2 + \cdots)$, the coefficients $a_1, a_2, a_3$ are distinct. Letting $\cO_T \subseteq \cO$ be the convergent power series subring generated by $x$ and $y$, we find that $\cO_T \cong \bC\{x, y\}/(f)$, where $f = \prod_{i=1}^3 (y - \psi_i(x))$. Since this is the local ring of a $T_{3, 6}$ singularity and the composed map $\cO_T \hookrightarrow \cO \hookrightarrow \ic{\cO}$ is birational over each branch, we can then see that $(C, o)$ birationally dominates a parabolic singularity, which gives the result by Proposition \ref{prop:tamedom}.
    
    The argument if $|m| = 4$ is similar; again we find $|m| = r$ and $m = e$, and now, for each $I \subseteq \{1, \ldots, r\}$, we have $w_0^C(m + \sum_{i \in I} e^i) = |I| - 2$. Our prior strategy gives us elements $x, y \in \cO \subset \ic{\cO} \cong \bC\{t_1\} \times \bC\{t_2\} \times \bC\{t_3\} \times \bC\{t_4\}$ with $x = (t_1, t_2, t_3, t_4)$ and $y = (\psi_1(t_1), \psi_2(t_2), \psi_3(t_3), \psi_4(t_4)) = (a_1t_1 + \cdots, a_2t_2 + \cdots, a_3t_3 + \cdots, a_4t_4 + \cdots)$ such that the coefficients $a_1, a_2, a_3, a_4$ are distinct; as before, the subring $\cO_T \subseteq \cO$ given by $\bC\{x, y\}/(f)$ with $f = \prod_{i=1}^4 (y - \psi_i(x))$ witnesses the fact that $(C, o)$ birationally dominates a parabolic singularity.
\end{proof}

\subsection{Proof of Theorem \ref{thm:tame}}
\label{subsec:proof}

We are now nearly in a position to prove our theorem; first we will need the following additional lemma:

\begin{lem} \label{lem:mult3wt0cyc}
    Suppose that $|m| = 3$. Then $(C, o)$ has a minimal spectral 1-cycle of weight $0$ if and only if $w_0^C(m + e) \ge 3 - r$. Moreover, if these conditions hold, $(C, o)$ is of finite CM type.
\end{lem}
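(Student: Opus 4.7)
The approach is to analyze $\mFM_{1,0}(C, o) = (E^1_{-m, 4})_0$ via its interpretation, from the proof of Lemma \ref{lem:filtcycstruct}, as the relative cellular homology $H_1(S_0 \cap B, S_0 \cap A)$, where $B = R(m, m + e)$ and $A$ is the subcomplex of $B$ spanned by the vertices of $B$ other than $m$. Since $|m| = 3$ forces $w_0^C(m) = -1$ (by (\ref{eq:multw})) and $w_0^C(m + e^i) = 0$ for each $1 \le i \le r$ (by (\ref{eq:wtdiff}) combined with $m \in \cS_C$), and since $m_i \ge 1$ for each $i$ gives $r \le 3$, we may case-split on $r \in \{1, 2, 3\}$. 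Writing $e_i := [m, m + e^i]$ for the edges of $B$ rooted at $m$ and $F_{ij} := R(m, m + e^i + e^j)$ for its lower $2$-faces, the relative chain complex takes the form $\bZ \xleftarrow{\partial_1} \bZ^r \xleftarrow{\partial_2} \bZ^s$ with $\partial_1 e_i = -m$ and $\partial_2 F_{ij} = e_i - e_j$, where $s$ counts those $F_{ij}$ lying in $S_0$, equivalently those pairs $(i, j)$ with $w_0^C(m + e^i + e^j) = -1$ (a possible $C_3$ term, arising when $r = 3$ and $B \subset S_0$, is irrelevant for $H_1$).

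The case $r = 1$ is trivial: both conditions fail. For $r = 2$, $\ker \partial_1 = \bZ$ and the sole possible face is $F_{12}$, so $\mFM_{1,0} \ne 0 \iff F_{12} \notin S_0 \iff w_0^C(m + e^1 + e^2) = 1$; since $w_0^C(m + e)$ is odd by the parity of $|m + e| = 5$, this is equivalent to $w_0^C(m + e) \ge 1 = 3 - r$. For $r = 3$ (so $m = e$), $\ker \partial_1 = \bZ^2$, and $\im \partial_2$ fills $\ker \partial_1$ precisely when at least two of the three $F_{ij}$'s are present in $S_0$; hence $\mFM_{1,0} \ne 0$ iff at least two of the pairs satisfy $w_0^C(m + e^i + e^j) = 1$. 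The main step is to identify this combinatorial condition with $w_0^C(m + e) \ge 0 = 3 - r$: the forward direction follows from (\ref{eq:wtdiff}) applied to the step from such a weight-$1$ vertex $m + e^i + e^j$ to $m + e$, while the contrapositive of the reverse uses the matroid inequality (\ref{eq:matroid}) on a pair $(m + e^i + e^j, m + e^j + e^k)$ of weight-$(-1)$ vertices sharing an index, forcing $w_0^C(m + e) \le -2$. I expect this matroid deduction to be the main (though routine) obstacle, as it depends on selecting the correct pair of lattice points.

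For the ``moreover'' clause, apply Proposition \ref{prop:fintypeptwts}: $(C, o)$ is of finite CM type if and only if $w_0^C(m) \ge -1$ and $w_0^C(2m) \ge 0$. The first inequality holds automatically. For the second: if $r = 3$ then $2m = m + e$, so the hypothesis yields $w_0^C(2m) = w_0^C(m + e) \ge 0$ directly; if $r = 2$ then $2m$ differs from $m + e$ by a single $e^1$-step, so (\ref{eq:wtdiff}) gives $w_0^C(2m) = w_0^C(m + e) \pm 1 \in \{0, 2\}$, which is again $\ge 0$.
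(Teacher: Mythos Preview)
Your proof is correct and follows essentially the same approach as the paper's: a case split on $r \in \{1,2,3\}$, the same explicit cellular computation of $H_1(S_0 \cap B, S_0 \cap A)$, and the same use of (\ref{eq:wtdiff}) and the matroid inequality (\ref{eq:matroid}) to relate the condition ``at least two of the $w_0^C(m+e^i+e^j)$ equal $+1$'' to $w_0^C(m+e) \ge 3-r$; the only cosmetic difference is that for $r=3$ you argue the reverse implication by contrapositive (two weight-$(-1)$ faces sharing an index force $w_0^C(m+e) \le -2$), whereas the paper argues directly (assuming $w_0^C(m+e) \ge 0$ and showing at most one face can have weight $-1$), which amounts to the same matroid computation read in the other direction.
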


\begin{proof}
    Note that $r \le |m| = 3$; we will consider each case separately, although we first note in general that $w_0^C(m) = -1$ by (\ref{eq:multw}). First, if $r = 1$, it is clear by Lemma \ref{lem:filtcycstruct} that $\mFM_{1,0}(C, o) = 0$, and likewise, by (\ref{eq:wtdiff} and $m \in \cS$, $w_0^C(m + e) = w_0^C(m + e^1) = 0$ whereas $3 - r = 3 - 1 = 2$. Thus the two statements are equivalent by virtue of both being false.
    
    If $r = 2$, we can see by Lemmas \ref{lem:filtcycstruct} and \ref{lem:spectvan} that $\mFM_{1,0}(C, o) \ne 0$ implies $w_0^C(m + e) = w_0^C(m + e^1 + e^2) = 1$. On the other hand, if $\mFM_{1,0}(C, o) = 0$, we must have $w_0^C(m + e) = w_0^C(m + e^1 + e^2) = -1$ by (\ref{eq:wtdiff}); hence, as desired, $\mFM_{1,0}(C, o) \ne 0$ if and only if $w_0^C(m + e)$ is bounded below by $3 - r = 3 - 2 = 1$. Since $|m| = 3$, we can also see that, for some $i \in \{1, 2\}$, $2m = m + e + e^i$, so this further implies that $w_0^C(2m) \ge 0$ by (\ref{eq:wtdiff}) and thus $(C, o)$ has finite CM type by Proposition \ref{prop:fintypeptwts}.
    
    Finally, if $r = 3$, we proceed as follows. If $\mFM_{1,0}(C, o) \ne 0$ and so there exist indices $1 \le i < j \le r$ such that $w_0^C(m + e^i + e^j) = 1$ by Lemmas \ref{lem:filtcycstruct} and \ref{lem:spectvan}, we can obtain $m + e$ from $m + e^i + e^j$ by adding the final basis element, and so $w_0^C(m + e) \ge 3 - r = 3 - 3 = 0$ follows by (\ref{eq:wtdiff}). On the other hand, if $w_0^C(m + e) \ge 0$, then the fact that $w_0^C(m + e^i) = 0$ for all $1 \le i \le r$ by (\ref{eq:wtdiff}) and $m \in \cS$ can be used with (\ref{eq:matroid}) to guarantee that at most one of $w_0^C(m + e^1 + e^2)$, $w_0^C(m + e^1 + e^3)$, and $w_0^C(m + e^2 + e^3)$, which are all $\pm 1$ by (\ref{eq:wtdiff}), can be $-1$. Our usual cellular computation then shows that the other two 2-cubes are already enough to trivialize the relative first homology, so $\mFM_{1,0}(C, o) = 0$. The desired equivalence follows; we can also see that, under these conditions, $w_0^C(2m) = w_0^C(m + e) \ge 0$, so $(C, o)$ is of finite CM type by Proposition \ref{prop:fintypeptwts}.
\end{proof}

\begin{proof}[Proof of Theorem \ref{thm:tame}]
    By Proposition \ref{prop:tamewt}, tameness implies that $\min w_0^C = -2$; therefore, for the first claim, it suffices to establish that our stated conditions are equivalent to the overring conditions of Proposition \ref{prop:tameoc} under the assumption that this equality holds. (O1a) will always be satisfied in this setting by (\ref{eq:multw}), while Lemma \ref{lem:o1bwt} and Proposition \ref{prop:domA} together imply that our Condition (c) on the branches $C_i$ is equivalent to (O1b).
    
    Both sets of conditions imply that $r > 1$; in the case of the theorem's hypotheses, this can be deduced from, e.g., Lemma \ref{lem:filtcycstruct} and Condition (b) on the existence of a minimal spectral 1-cycle of weight $-1$, while it is due to (O1b) and the supposition that $(C, o)$ is not of finite CM type in the case of the overring conditions. As such, we find by Proposition \ref{prop:tamecycle} that, in our setting, Condition (b) is equivalent to both (O2a) and (O3) being satisfied.
    
    Finally, we can see that Condition (d) on the unions $\hat C_i$ is equivalent to (O2b) in our setting. Since both sets of conditions imply that the multiplicity of $C$ at its closed point is at most 4, and removing a branch reduces the total multiplicity by the multiplicity of that branch, each $\hat C_i$ has multiplicity at most 3. For indices $1 \le i \le r$ where this bound is strict, it follows by Proposition \ref{prop:domA} and Remark \ref{rem:emwtA} that $\hat C_i$ both dominates an $A_n$ curve germ for some $n \ge 0$ and satisfies the weight inequality of Lemma \ref{lem:o2bwt}. On the other hand, if this multiplicity is exactly 3, we can see that this weight inequality is equivalent to $\hat C_i$ having a minimal spectral 1-cycle of weight $0$ and overall minimum weight $-1$ by Lemma \ref{lem:mult3wt0cyc} and Theorem \ref{thm:fintypewtbd}, and that this in turn is equivalent to $\hat C_i$ dominating a $D_n$ singularity for some $n \ge 4$ by Proposition \ref{prop:domD}. Thus the first claim follows by Lemma \ref{lem:o2bwt}.
    
    The second claim is given by Propositions \ref{prop:fgtomr} and \ref{prop:mrtofg}.
\end{proof}

\section{The motivic Poincar\'{e} series} \label{sec:series}

Let $(C, o)$ be a complex-analytic curve germ; we use the conventions of Sections \ref{sec:setup} and \ref{sec:lh}. In the main characterizations of the previous sections, the lattice-homological invariant $\min w_0^C$ and certain pieces of information about the shape of the $E^1$-page of the associated spectral sequence play a crucial role. Using the connections between the refinement of the $E^1$-page given in Proposition \ref{prop:improvepg1} and the \defterm{multivariable motivic Poincar\'e series} associated with $(C, o)$, we are now in a position to describe the Cohen-Macaulay type of curve germs in terms of this latter object; indeed, we will see that an appropriately-defined univariate version of this series is enough for this purpose. Despite the strength of the motivic Poincar\'e series as a numerical invariant, the existence of such a description is somewhat surprising in light of the lack of an obvious conceptual connection with the classification of indecomposable CM $\cO$-modules.

\subsection{The Poincar\'e series of the (refined) $E^1$-page (\cite{GorNem2015,NFilt})}

In this subsection, we collect some additional facts and information regarding the entries $(E^1_{-\ell,q})_{-2n}$ defined in Proposition \ref{prop:improvepg1}, which may give additional insight into their properties and role. These were already considered (in a slightly different way) in \cite{GorNem2015}, where they served as a bridge between the (local) lattice homology associated with the lattice point $\ell \in \bN^r$ (in some sense, the lattice homology in the rectangle $R(\ell, \ell + e)$) and the Orlik-Solomon algebra associated with the local hyperplane arrangement $\bigcup_{i=1}^r \cF(\ell + e^i) \subseteq \cF(\ell)$.  By Theorems 4.2.1 and 5.3.1 of \cite{GorNem2015}, reinterpreted in the present notation in Theorems 4.7.3 and 6.1.3 of \cite{NFilt}, one has the following properties for a given $n \in \bZ$ and $\ell \in \bN^r$:
\begin{equation} \label{eq:P1}
    (E^1_{-\ell, |\ell| + k})_{-2n} = H_k(S_n \cap \mFY_{-\ell}, S_n \cap \mFY_{-\ell} \cap \mFX_{-|\ell| - 1}) \text{ has no $\bZ$-torsion.}
\end{equation}
\begin{equation} \label{eq:P2}
 \text{If } (E^1_{-\ell, |\ell| + k})_{-2n} = H_k(S_n \cap \mFY_{-\ell}, S_n \cap \mFY_{-\ell} \cap \mFX_{-|\ell| - 1}) \ne 0, \text{ then } n = w_0^C(\ell) + k.
\end{equation}
(We note that (\ref{eq:P2}) also follows immediately from Lemma \ref{lem:filtcycstruct}, while (\ref{eq:P1}) can be deduced from its proof.)

The generating function in $\bZ[[\pvv{T}, Q]]_Q[h]$ for the terms $\rk {(E^1_{-\ell, |\ell| + k})_{-2n}}$ (cf. \cite{NFilt}, noting the change in grading convention) is
\begin{equation}\label{eq:DEFPE}
    PE(\pvv{T}, Q, h) = PE(T_1, \ldots, T_r, Q, h) := \sum_{\ell \in \bN^r, k \in \bN, n \in \bZ} \rk {(E^1_{-\ell, |\ell| + k})_{-2n}} \cdot {T_1}^{\ell_1}\cdots {T_r}^{\ell_r} Q^n h^{k}.
\end{equation}

By making the substitutions $T_i \to T$, we obtain also a univariate version, the generating function of the terms $\rk{(E_{-d,d+k}^1)_{-2n}}$:
\begin{equation}\label{eq:DEFPEuni}
    PE(T, Q, h) := PE(\pvv{T}, Q, h)|_{T_i \to T} = \sum_{d, k \in \bN, n \in \bZ} \rk {(E^1_{-d, d + k})_{-2n}} \cdot T^d Q^n h^{k}.
\end{equation}
Note in particular that this does not depend on the refinement given in Proposition \ref{prop:improvepg1}, only on the original $E^1$-page of the spectral sequence.

\subsection{Relation with the motivic Poincar\'e series}
\label{subsec:motpoin}

The \defterm{multivariable motivic Poincar\'e series} $P_C^m(\pvv{t}, q) = \sum_{\ell \in \bN^r}\mFp^m_\ell(q)\pvv{t}^{\ell} \in \bZ[q][[\pvv{t}]]$ can be defined in many different ways---e.g., via motivic integrals or generating functions with coefficients in the Grothendieck ring of algebraic varieties as in \cite{cdg3}. Here we define it via the Hilbert function of $(C, o)$ as follows (cf. \cite{cdg3}):
\begin{equation}\label{eq:pmot}
    \mFp^m_{\ell}(q) := \sum_{J \subseteq \{1, \ldots, r\}} (-1)^{|J|} \cdot \frac{q^{\mFh(\ell + e^J)}}{1 - q} = \sum_{J \subseteq \{1, \ldots, r\}} (-1)^{|J|} \cdot \frac{q^{\mFh(\ell + e^J)} - q^{\mFh(\ell)}}{1 - q},
\end{equation}
with $e^J$ as in (\ref{eq:PO}).

We will also find it useful to consider the corresponding \defterm{univariable motivic Poincar\'e} series $P_C^m(t, q) := P_C^m(\pvv{t}, q)|_{t_i \to t}$, which, to the best of our knowledge, has not been explicitly discussed in the literature; we denote its coefficient polynomials by $\mFp^m_d(q) := \sum_{|\ell| = d} \mFp^m_\ell(q)$, so that $P_C^m(t, q) = \sum_{d \in \bN} \mFp^m_d(q) t^d$.

\begin{prop} \label{prop:mot} \cite{cdg3,MoyanoTh,Moyano,Gorsky,GorNem2015}
    We now recall a few key properties of $P_C^m(\pvv{t}, q)$:
    \begin{enumerate}[label=\normalfont(\alph*)]
        \item $\lim_{q \to 1} P_C^m(\pvv{t}, q) = P_C(\pvv{t})$.

        \item $P_C^m(\pvv{t}, q)$ is a rational function of type $\overline{P_C^m}(\pvv{t}, q)/(\prod_{i=1}^r (1 - t_iq))$ with $\overline{P_C^m}(\pvv{t}, q) \in \bZ[\pvv{t}, q]$.
        
        \item The support of $P_C^m(\pvv{t}, q)$ as an element of $(\bZ[q])[[\pvv{t}]]$ is exactly $\cS_C$. That is, for $\ell \in \bN^r$, $\mFp^m_\ell(q) \ne 0$ if and only if  $\ell \in \cS_C$.
        
        \item If $(C, o)$ is Gorenstein then, for $c = (c_1, \ldots, c_r)$ the conductor, $$\overline{P_C^m}((qt_1)^{-1}, \ldots, (qt_r)^{-1}, q) = q^{-\delta(C, o)} \prod_{i=1}^r {t_i}^{-c_i} \cdot \overline{P_C^m}(t_1, \ldots, t_r, q).$$
    \end{enumerate}
\end{prop}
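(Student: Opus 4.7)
For part (a), starting from the second form of (\ref{eq:pmot}) and using the elementary limit $\lim_{q \to 1}(q^a - q^b)/(1-q) = b-a$, a termwise computation gives
\[
\lim_{q \to 1}\mFp^m_\ell(q) = \sum_{J \subseteq \{1, \ldots, r\}} (-1)^{|J|+1}\mFh(\ell + e^J) = \mFp(\ell),
\]
the last equality being precisely (\ref{eq:PO}); summing over $\ell \in \bN^r$ then yields $\lim_{q \to 1} P_C^m(\pvv t, q) = P_C(\pvv t)$.

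Parts (b) and (c) will be handled jointly through the closed-form expression
\begin{equation*}
\mFp^m_\ell(q) = \begin{cases} q^{\mFh(\ell)}(1-q)^{r-1} & \text{if } \ell \in \cS_C, \\ 0 & \text{otherwise,}\end{cases}
\end{equation*}
which is established in the cited sources (in particular \cite{Moyano,MoyanoTh,GorNem2015}). Its derivation rests on the additivity identity
\[
\mFh(\ell + e^J) = \mFh(\ell) + \sum_{i \in J}a_i(\ell), \qquad a_i(\ell) := \mFh(\ell + e^i) - \mFh(\ell) \in \{0, 1\},
\]
proved inductively from (\ref{eq:hfromS}) using the compatibility of the sets $\overline{\Delta}_i$ under shifts by $e^j$ for $j \ne i$. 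Substituting this into the definition of $\mFp^m_\ell$ causes the alternating sum to factor as $q^{\mFh(\ell)}\prod_{i=1}^r(1 - q^{a_i(\ell)})/(1-q)$; the product vanishes unless every $a_i(\ell) = 1$, which by (\ref{eq:Sfromh}) is equivalent to $\ell \in \cS_C$, thereby establishing (c). For (b), the strategy is to partition $\cS_C$ into its finite part $\cS_C \setminus (c + \bN^r)$ and the tail $c + \bN^r$ (on which $\mFh$ is affine linear in the offset from $c$); the tail contributes a geometric sum equal to $q^{\mFh(c)}\pvv t^c / \prod_i(1 - t_iq)$, so multiplying $P_C^m$ by $\prod_i(1 - t_i q)$ clears this denominator and leaves the polynomial $\overline{P_C^m}$.

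For part (d), the plan is to exploit the Gorenstein symmetry (\ref{eq:GOR}), which---via $\mFh = (w_0 + |\ell|)/2$---yields the identity $\mFh(\ell) - \mFh(c - \ell) = |\ell| - \delta$ for $0 \le \ell \le c$ and, upon evaluating at $\ell = c$, the relations $|c| = 2\delta$ and $\mFh(c) = \delta$. Differencing this identity at $\ell$ and $\ell + e^i$ yields the pointwise duality $a_i(\ell) + a_i(c - \ell - e^i) = 1$; combined with the closed formula from (b)/(c), this enables a term-by-term matching in the claimed functional equation, with the rational contribution $q^\delta \pvv t^c$ from clearing the tail's denominator pairing correctly against the terms produced by $\prod_i(1 - t_iq) = \prod_i t_i^{-1}(1 - t_i)(-1)$ evaluated at $t_i \mapsto (t_iq)^{-1}$, up to the scaling factor $q^{-\delta}\prod_i t_i^{-c_i}$.

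The primary technical obstacle will lie in part (d): $\overline{P_C^m}$ receives contributions both from the finite sum over $\cS_C \setminus (c + \bN^r)$ and from the boundary terms created when $\prod_i(1 - t_iq)$ multiplies the rational part of the tail, and verifying that these transform consistently under $t_i \mapsto (t_iq)^{-1}$ via the semigroup duality requires careful bookkeeping. For irreducible germs ($r = 1$) this reduces to the classical numerical-semigroup symmetry, while the multivariable case is carried out in the cited references.
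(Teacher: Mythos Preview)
The paper does not prove this proposition; it is stated as a recollection of results from the cited references, so there is no ``paper's own proof'' to compare against. That said, your sketch contains a genuine error that undermines parts (b), (c), and (d).

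The additivity identity you assert,
\[
\mFh(\ell + e^J) = \mFh(\ell) + \sum_{i \in J} a_i(\ell),
\]
is false in general. Take $(C,o) = A_1$, so $r = 2$ and both branches are smooth with intersection multiplicity $1$. Then $\mFh(0,0) = 0$, $\mFh(1,0) = \mFh(0,1) = 1$ (a generic linear form vanishes to order $1$ on each branch), but $\mFh(1,1) = 1$ as well, since $\cF(1,1) = \mFm$ and $\dim_\bC \cO/\mFm = 1$. Thus $a_1(0) = a_2(0) = 1$, yet $\mFh(e^{\{1,2\}}) = 1 \ne 0 + 1 + 1$. The matroid inequality (\ref{eq:matroid}) is generally strict, and your identity would force it to be an equality everywhere.

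Consequently your closed formula $\mFp^m_\ell(q) = q^{\mFh(\ell)}(1-q)^{r-1}$ for $\ell \in \cS_C$ is also wrong: in the same $A_1$ example one computes directly from (\ref{eq:pmot}) that $\mFp^m_{(0,0)}(q) = (1 - q - q + q)/(1-q) = 1$, not $1-q$. The correct structure is subtler: as noted in Remark \ref{rem:hilbfrompoin}, $\mFp^m_\ell(q)$ for $\ell \in \cS_C$ has lowest term $q^{\mFh(\ell)}$ with nonzero coefficient, but the full polynomial depends on the local configuration of the hyperplanes $\cF(\ell + e^i) \subset \cF(\ell)$ (this is the Orlik--Solomon connection exploited in \cite{GorNem2015}). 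Part (a) of your argument is fine, and the Gorenstein duality $a_i(\ell) + a_i(c-\ell-e^i) = 1$ you derive for (d) is correct, but since (d) relies on the erroneous closed formula the argument there does not go through either.
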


For a combinatorial formula, valid for plane curve singularities, in terms of the embedded resolution graph, see \cite{cdg3}, or \cite{Gorsky} for a simplified version.

\begin{prop} \label{prop:Etopoin}
    The key identification of $PE(\pvv{T}, Q, h)$ and $P_C^m(\pvv{t}, q)$ is as follows:
    \begin{enumerate}[label=\normalfont(\alph*)]
        \item (\cite[Theorem 6.1.3]{NFilt}; see also \cite{GorNem2015}) $PE(\pvv{T}, Q, h)|_{T_i \to t_i\sqrt{q},\, Q \to \sqrt{q},\, h \to -\sqrt{q}} = P_C^m(\pvv{t}, q).$ (In particular, by (\ref{eq:P2}), we can see that this substitution takes the monomial $\pvv{T}^\ell Q^n h^k$ to $(-1)^k \pvv{t}^\ell q^{(|\ell| + n + k)/2} = (-1)^k \pvv{t}^\ell q^{\mFh(\ell) + k}$ for $(\ell, n, k) \in \supp PE \subseteq \bN^\ell \times \bZ \times \bN$.)
        
        \item The preceding substitution is injective on monomials in the support of $PE$; that is, if $\pvv{T}^\ell Q^n h^k$ and $\pvv{T}^{\ell'} Q^{n'} h^{k'}$ are distinct and have nonzero coefficients in $PE(\pvv{T}, Q, h)$, the substitution sends them to different monomials of $P_C^m(\pvv{t}, q)$.
    \end{enumerate}
\end{prop}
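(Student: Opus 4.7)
The plan is to unpack the substitution monomial by monomial and then exploit the sharp constraint on $\supp PE$ given by (\ref{eq:P2}). Under the substitution $T_i \to t_i\sqrt{q}$, $Q \to \sqrt{q}$, $h \to -\sqrt{q}$, the monomial $\pvv{T}^\ell Q^n h^k$ becomes $(-1)^k \pvv{t}^\ell q^{(|\ell|+n+k)/2}$, as already noted in part (a). Two triples $(\ell, n, k)$ and $(\ell', n', k')$ give rise to the same underlying $\pvv{t},q$-monomial (ignoring sign) precisely when $\ell = \ell'$ and $n + k = n' + k'$, and I would begin by extracting these two equalities by reading off exponents coordinate-wise.

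The remaining task is to rule out the possibility that two distinct triples with the same $\ell$ and the same $n + k$, but differing in their individual $(n, k)$-components, both lie in $\supp PE$. This is where (\ref{eq:P2}) is used. Since $(\ell, n, k) \in \supp PE$ forces $(E^1_{-\ell, |\ell|+k})_{-2n} \ne 0$, property (\ref{eq:P2}) gives $n = w_0^C(\ell) + k$, i.e. $n - k = w_0^C(\ell)$. Applying the same relation to $(\ell', n', k') = (\ell, n', k')$ yields $n' - k' = w_0^C(\ell)$, whence $n - k = n' - k'$. Combined with $n + k = n' + k'$, this immediately forces $n = n'$ and $k = k'$, contradicting distinctness and completing the proof.

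Conceptually, the map $(\ell, n, k) \mapsto (\ell, \,n+k, \,n-k)$ is manifestly injective on $\bN^r \times \bZ \times \bN$; the monomial $(-1)^k \pvv{t}^\ell q^{(|\ell|+n+k)/2}$ already records $\ell$ and $n+k$, while the containment $\supp PE \subseteq \{(\ell, n, k) : n - k = w_0^C(\ell)\}$ recovers $n - k$ from $\ell$ alone. Thus there is no real obstacle: the only serious input, (\ref{eq:P2}), is already established earlier in the paper, and the rest is bookkeeping with exponents.
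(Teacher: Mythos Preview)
Your argument is correct and is exactly the unpacking of the paper's one-line proof, which simply says that part (b) follows from part (a) together with (\ref{eq:P2}). You have spelled out precisely the intended reasoning: reading off $\ell$ and $n+k$ from the substituted monomial, then using (\ref{eq:P2}) to pin down $n-k = w_0^C(\ell)$, which recovers $(n,k)$ uniquely.
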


\begin{proof}
    Part (b) follows from Part (a) and (\ref{eq:P2}).
\end{proof}

In particular, Part (a) implies that the corresponding substitution identity holds for the univariable series as well; that is, $PE(T, Q, h)|_{T \to t\sqrt{q},\, Q \to \sqrt{q},\, h \to -\sqrt{q}} = P_C^m(t, q)$.

\begin{cor} \label{cor:Etopoin}
    We list some further consequences:
    \begin{enumerate}[label=\normalfont(\alph*), itemsep=1mm]
        \item If $(E^1_{-\ell, |\ell| + k})_{-2n} \ne 0$ for $\ell \in \bN^r$, $n \in \bZ$, and $k \in \bN$, then $k = n - w_0^C(\ell)$ and the rank of this free $\bZ$-module is $(-1)^k$ times the coefficient of $\pvv{t}^\ell q^{\mFh(\ell) + k}$ in $P_C^m(\pvv{t}, q)$. For example:
        \begin{enumerate}[label=\normalfont(\roman*)]
            \item If $|m| = 3$, then $\rk \mFM_{1,0}(C, o) = \rk {(E_{-3,4}^1)_0} = \rk {(E_{-m,1+|m|}^1)_{-2(0)}}$ is the negative of the coefficient of $\pvv{t}^m q^2$.
            
            \item If $|m| = 3$, then $\rk \mFM_{1,-1}(C, o) = \rk {(E_{-6,7}^1)_2} = \rk {(E_{-2m,1+2|m|}^1)_{-2(-1)}}$ is either zero or the negative of the coefficient of $\pvv{t}^{2m} q^3$, whichever is larger.
            
            \item If $|m| = 4$, then $\rk \mFM_{1,-1}(C, o) = \rk {(E_{-4,5}^1)_2} = \rk {(E_{-m,1+|m|}^1)_{-2(-1)}}$ is the negative of the coefficient of $\pvv{t}^m q^2$.
        \end{enumerate}
        
        \item $\min w_0^C = \ord P_C^m(\pvv{t}, q)|_{t_i \to \omega^{-1},\, q \to \omega^2} = \ord \overline{P_C^m}(\pvv{t}, q)|_{t_i \to \omega^{-1},\, q \to \omega^2}$ and the coefficient of the lowest-order term in each case is $\rk \big((\bH_0(C, o))_{-2\min w_0^C}\big)$.
    \end{enumerate}
\end{cor}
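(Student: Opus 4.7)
The plan is to deduce both parts from direct substitutions into the series $PE(\pvv{T},Q,h)$, using Proposition \ref{prop:Etopoin} as the bridge to $P_C^m(\pvv{t},q)$.

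For part (a), the equality $k=n-w_0^C(\ell)$ is immediate from (\ref{eq:P2}), and the freeness needed to interpret the rank comes from (\ref{eq:P1}). Under the substitution of Proposition \ref{prop:Etopoin}(a), the summand $\rk(E^1_{-\ell,|\ell|+k})_{-2n}\,\pvv{T}^\ell Q^n h^k$ of $PE$ maps to $(-1)^k\rk(E^1_{-\ell,|\ell|+k})_{-2n}\,\pvv{t}^\ell q^{\mFh(\ell)+k}$, and the injectivity clause Proposition \ref{prop:Etopoin}(b) guarantees that this is the \emph{only} $PE$-contribution to that monomial of $P_C^m$; the formula follows. Each of the three examples then reduces to inserting the specified $(\ell,n,k)$ and checking the weight identity $w_0^C(\ell)=n-k$ via (\ref{eq:multw}): in (i) and (iii) this reduces to $w_0^C(m)=2-|m|$, automatic since $m\in\cS_C$, while in (ii) it becomes $w_0^C(2m)=-2$, which need not hold---hence the ``whichever is larger'' qualifier.

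For part (b), combining Proposition \ref{prop:Etopoin}(a) with the further substitutions $t_i\to\omega^{-1}$, $q\to\omega^2$ transforms each nonzero summand of $PE$ into $(-1)^k\rk(E^1_{-\ell,|\ell|+k})_{-2n}\,\omega^{w_0^C(\ell)+2k}$, using $2\mFh(\ell)-|\ell|=w_0^C(\ell)$. Since $k\ge 0$, the minimal $\omega$-exponent is $\min w_0^C$, achieved exactly when $k=0$ and $w_0^C(\ell)=\min w_0^C$. By (\ref{eq:wtdiff}), adjacent lattice points differ in weight by $\pm 1$, so $S_{\min w_0^C}$ is a discrete set of isolated vertices, and a direct relative-$H_0$ computation identifies each corresponding $E^1$-entry as free of rank $1$. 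Summing yields leading coefficient $\#\{\ell\in\bN^r:w_0^C(\ell)=\min w_0^C\}=\rk H_0(S_{\min w_0^C})=\rk(\bH_0(C,o))_{-2\min w_0^C}$. Passing from $P_C^m$ to $\overline{P_C^m}$ multiplies by $\prod_{i=1}^r(1-t_iq)\mapsto(1-\omega)^r$, whose lowest-order term is $1$, so both the order and the leading coefficient are preserved.

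The main subtlety I anticipate is confirming the ``maximum'' form in example (ii) of part (a), where one must rule out competing contributions to the coefficient of $\pvv{t}^{2m}q^3$ in $P_C^m$ when $w_0^C(2m)\ne -2$. A short case analysis using (\ref{eq:wtdiff}), starting from $w_0^C(m)=-1$, restricts $w_0^C(2m)$ to the set $\{-2,0,2\}$; when this value is not $-2$, the unique candidate $PE$-monomial mapping to $\pvv{t}^{2m}q^3$ (if one exists) carries even $k'$, so the resulting coefficient is non-negative while $\rk\mFM_{1,-1}(C,o)=0$, confirming that the maximum of $0$ and $-(\text{coefficient})$ reproduces the rank.
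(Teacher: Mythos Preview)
Your proof is correct and follows essentially the same route as the paper: the substitution identity of Proposition~\ref{prop:Etopoin}(a) together with the injectivity clause (b) for Part~(a), and the composed substitution $t_i\to\omega^{-1}$, $q\to\omega^2$ tracking the exponent $w_0^C(\ell)+2k$ for Part~(b). One small point to tighten: in examples (i) and (iii) you check only the weight identity $w_0^C(m)=2-|m|$, which feeds the main statement in the nonzero-rank case, but the examples claim an \emph{unconditional} equality---the paper handles the zero-rank case separately by observing that $\mFh(m)=1$ forces $k=1$ (so the unique possible $PE$-preimage of $\pvv t^m q^2$ is precisely the one in question), and you should say the same; your own uniqueness-of-preimage reasoning in the (ii) paragraph already contains the needed idea.
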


\begin{proof}
    The main statement in Part (a) follows directly from the proposition. Each of the consequences (i--iii) is then immediate in the case where the module in question has nonzero rank; otherwise, Part (b) of the proposition no longer applies, and so we must account for the possibility that a different term of $PE(\pvv{T}, Q, h)$ produces the same monomial in $P_C^m(\pvv{t}, q)$. For (i) and (iii), this is already clear since $\mFh(m) = 1$ necessarily and so we must have $k = 1$ and $n = 3 - |m|$ for the term $\pvv{t}^m q^2$ to appear. For (ii), we note that $\mFh(2m) \ge 2$, so we can obtain $\pvv{t}^{2m} q^3$ either by $\mFh(2m) = 2, k = 1, n = -1$ or by $\mFh(2m) = 3, k = 0, n = 0$; if $\rk (E^1_{-2m, 6})_0 > 0$, however, we can see by the main statement of Part (a) and $k = 0$ that the resulting coefficient of $\pvv{t}^{2m} q^3$ will be positive as well. The result follows.
    
    For Part (b), we proceed as follows. By Part (a), if $(E^1_{-\ell, |\ell| + k})_{-2n} \ne 0$ for $\ell \in \bN^r$, $n \in \bZ$, and $k \in \bN$, the monomial $\pvv{T}^\ell Q^n h^k$ will be sent by the composition of our two substitutions to $\omega^{w_0^C(\ell) + 2k}$, which has order $w_0^C(\ell) + 2k \ge w_0^C(\ell) \ge \min w_0^C$. (The fact that $P_C^m(\pvv{t}, q)|_{t_i \to \omega^{-1},\, q \to \omega^2}$ is a well-defined element of $\bZ[[\omega]]_\omega$ now follows by Theorem \ref{thm:EUcurves} and (\ref{eq:wtdiff}).) Moreover, if $\ell \in \bN^r$ is such that $w_0^C(\ell) = \min w_0^C$, we can see by (\ref{eq:wtdiff}) that $\rk {(E^1_{-\ell, |\ell| + 0})_{-2w_0^C(\ell)}} = 1$, so the term $\omega^{\min w_0^C}$ will have coefficient $\sum_{w_0^C(\ell) = \min w_0^C} (-1)^0 = \#\left\{\ell \in \bN^r \mid w_0^C(\ell) = \min w_0^C\right\} = \rk \bH_{0,-2\min w_0^C}(C, o) > 0$ in the substituted series. The first result for $P_C^m$ now follows; for $\overline{P_C^m}$, observe that the substitution takes each $t_iq$ to $\omega$.
\end{proof}

\begin{example}
    Suppose $(C, o)$ is the $D_4$ singularity, so that $m = (1, 1, 1)$. Then, using either (\ref{eq:pmot}) and the Hilbert function computed in Subsection \ref{subsec:dneven} or \cite[5.1]{Gorsky}, we have $$P^m_C(\pvv{t}, q) = 1 + \frac{q(1-q)^2t_1t_2t_3 - q^3t_1t_2t_3(1-t_1)(1-t_2)(1-t_3)}{(1 - t_1q)(1 - t_2q)(1 - t_3q)}$$ and hence $P_C^m(\pvv{t}, q)|_{t_i \to \omega^{-1},\, q \to \omega^2} = \frac{1 + 3\omega - 4\omega^2 + 2\omega^3}{\omega(1 - \omega)^2}$. Since the coefficient of $\pvv{t}^m$ is $q(1-q)^2 - q^3= q - 2q^2$, we find that the coefficient of $\pvv{t}^m q^2$ is $-2$, which agrees with the computation $(E^1_{-3, 4})_0 = \bZ^2$ of Subsection \ref{subsec:dneven}. Likewise, since $P_C^m(\pvv{t}, q)|_{t_i \to \omega^{-1},\, q \to \omega^2} = \omega^{-1} + \cdots$, we find as in our computations that $\min w_0^C = -1$ is achieved at a single lattice point.
\end{example}

\begin{rem} \label{rem:hilbfrompoin}
    In Subsection \ref{subsec:formul}, we mentioned that (for non-plane curve singularities) the ordinary multivariable Poincar\'e series does not determine the Hilbert function. However, the multivariable \emph{motivic} Poincar\'e series does determine the Hilbert function.

    Indeed, for any $\ell \in \cS$, the coefficient $\mFp_\ell^m(q)$ of $\pvv{t}^\ell$ in $P_C^m(\pvv{t}, q)$ has the form $\sum_{k \ge 0} p_{\ell, k} q^{\mFh(\ell) + k}$, where $p_{\ell, 0} \ne 0$; on the other hand, for $\ell \not\in \cS$, \cite[Theorem 6.1.3(h)]{NFilt} tells us that every corresponding summand in the improved $E^1$-page is zero, and so by Proposition \ref{prop:Etopoin} $\mFp_\ell^m(q) = 0$ as well. Hence we can retrieve the semigroup, which already gives us the Hilbert function by (\ref{eq:hfromS}). More directly, we note that by the preceding formula $\mFh(\ell) = \ord \mFp_\ell^m(q)$ for $\ell \in \cS$, and for arbitrary $\ell \in \bN^r$ we have $\mFh(\ell) = \mFh(s(\ell))$, where $s(\ell)$ is the minimal semigroup element satisfying $s(\ell) \ge \ell$, by (\ref{eq:hfromS}); that is, $\mFh(\ell) = \ord \mFp_{\min\{\ell' \ge \ell \mid \mFp_{\ell'}^m(q) \ne 0\}}^m(q)$.
\end{rem}

\subsection{The main characterizations reformulated in terms of $P^m_C$}
\label{subsec:reform}

We now obtain the following reformulation of our main results in terms of the univariable motivic Poincar\'e series $P_C^m(t, q) = \sum_{d \in \bN} \mFp^m_d(q) t^d$:

\begin{mainthm} \label{thm:poin}
    Let $f^C(\omega) := P_C^m(\pvv{t}, q)|_{t_i \to \omega^{-1},\, q \to \omega^2} = P_C^m(t, q)|_{t \to \omega^{-1},\, q \to \omega^2}$. For convenience, we also write $\mFp^m_d(q) = \sum_{j \in \bN} \pi_{d, j}^C q^j$ and set $\mu^C := \ord_t (P_C^m(t, q) - \mFp^C_0(q)) = \min \{d \ge 1 \mid \mFp^C_d(q) \ne 0\}$.
    
    Then $(C, o)$ is of finite Cohen-Macaulay type if and only if $\ord f^C \ge -1$. In particular:
    \begin{itemize}
        \item $(C, o)$ birationally dominates (and hence is) a simple germ of type $A_n$ (for some $n \ge 0$) if and only if $\ord f^C = 0$ or, equivalently, $\mu^C \le 2$.
        
        \item $(C, o)$ birationally dominates a simple singularity of type $D_n$ (for some $n \ge 4$), but no simple germ of type $A_n$ (for any $n \ge 0$), if and only if $\ord f^C = -1$ and $\pi_{3, 2}^C \ne 0$ (equivalently, $\pi_{3, 2}^C < 0$).
        
        \item $(C, o)$ birationally dominates a simple singularity of type $E_6$, $E_7$, or $E_8$, but no simple germ of type $A_n$ (for any $n \ge 0$) or $D_n$ (for any $n \ge 4$), if and only if $\ord f^C = -1$ and $\pi_{3, 2}^C = 0$.
    \end{itemize}
    
    Likewise, $(C, o)$ is of tame Cohen-Macaulay type if and only if all of the following conditions hold:
    \begin{enumerate}[label=\normalfont(\alph*), itemsep=1mm]
        \item $\ord f^C = -2$.
        \item Either $\mu^C = 3$ and $\pi_{6, 3}^C < 0$, or $\mu^C = 4$ and $\pi_{4, 3}^C \ne 0$ (equivalently, $\pi_{4, 3}^C < 0$).
        \item For each $1 \le i \le r$, $\ord f^{C_i} = 0$ (equivalently, $\mu^{C_i} \le 2$).
        \item For each $1 \le i \le r$, if we let $(\hat C_i, o) := \bigcup_{j \ne i} (C_j, o)$, then either $\ord f^{\hat C_i} = 0$, or $\ord f^{\hat C_i} = -1$ and $\pi_{3, 2}^{\hat C_i} \ne 0$.
    \end{enumerate}
    If Conditions (a--d) hold, $(C, o)$ is of finite growth if and only if either $\mu^C = 3$ and $\pi_{6, 3}^C = -2$, or $\mu^C = 4$ and $\pi_{4, 3}^C = -3$. (Note that, in general, $\mu^C = 3$ implies $\pi_{6, 3}^C \ge -2$ and $\mu^C = 4$ implies $\pi_{4, 3}^C \ge -3$.)
\end{mainthm}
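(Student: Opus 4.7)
The strategy is to use Corollary \ref{cor:Etopoin} and Proposition \ref{prop:mot}(c) as a dictionary translating the lattice-homological characterizations of Theorems \ref{thm:finite} and \ref{thm:tame} into conditions on the motivic Poincar\'e series. Three auxiliary identifications underlie everything: (i) the multiplicity $|m|$ equals $\mu^C$, since by Proposition \ref{prop:mot}(c) the $\pvv{t}$-support of $P_C^m(\pvv{t}, q)$ is $\cS_C$ and, by $\cS_C \setminus \{0\} \subseteq m + \bN^r$ with $m \in \cS_C$, the minimum positive total weight in $\cS_C$ is $|m|$; (ii) $\min w_0^C = \ord f^C$ by Corollary \ref{cor:Etopoin}(b); (iii) for lattice points $\ell$ realized uniquely within $\cS_C$ at their total weight, Corollary \ref{cor:Etopoin}(a) identifies specific coefficients of $P_C^m(t, q)$ directly with signed ranks of minimal spectral cycle groups.

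I would first establish the finite-CM-type equivalence. The top-level statement follows from (ii) combined with Theorem \ref{thm:finite}. For the $A_n$ subcase, Proposition \ref{prop:domA} gives $\min w_0^C = 0 \Leftrightarrow |m| \le 2 \Leftrightarrow \mu^C \le 2$. For the $D_n$-versus-$E_n$ distinction, the combination $\min w_0^C = -1$ with non-$A$-dominance forces $|m| = 3$ via (\ref{eq:multw}); then $m$ is the unique element of $\cS_C$ with $|\ell| = 3$, so $\pi^C_{3, 2}$ coincides with the coefficient of $\pvv{t}^m q^2$ in $P_C^m(\pvv{t}, q)$, which Corollary \ref{cor:Etopoin}(a)(i) identifies with $-\rk \mFM_{1, 0}(C, o)$. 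The presence or absence of a minimal spectral 1-cycle of weight $0$, per Theorem \ref{thm:finite}, thus reads off as $\pi^C_{3, 2} \ne 0$ or $\pi^C_{3, 2} = 0$ respectively.

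Next, for the tame characterization, condition (a) is immediate from (ii), and conditions (c) and (d) follow by applying the just-established finite-CM-type criteria to the subcurves $(C_i, o)$ and $(\hat C_i, o)$. For condition (b), the $\mu^C = 4$ case proceeds exactly as in the $D$-versus-$E$ analysis: $m$ is the unique element of $\cS_C$ with $|\ell| = 4$, so Corollary \ref{cor:Etopoin}(a)(iii) yields $\pi^C_{4, 3} = -\rk \mFM_{1, -1}(C, o)$. For the $\mu^C = 3$ case, the ``if'' direction invokes Lemma \ref{lem:spectsemigp}: if $\mFM_{1, -1}(C, o) \ne 0$, then $\cS_C \setminus \{0, m\} \subseteq 2m + \bN^r$, so $2m$ is the unique element of $\cS_C$ with $|\ell| = 6$, and Corollary \ref{cor:Etopoin}(a)(ii) delivers $\pi^C_{6, 3} = -\rk \mFM_{1, -1}(C, o) < 0$.

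The main obstacle is the converse in the $\mu^C = 3$ case of condition (b): assuming (a), (c), (d) hold and $\mFM_{1, -1}(C, o) = 0$, one must show $\pi^C_{6, 3} \ge 0$. This requires controlling the contributions to $\pi^C_{6, 3}$ from lattice points $\ell \in \cS_C$ with $|\ell| = 6$ and $\ell \ne 2m$; such $\ell$ are of the form $m + \mu$ with $|\mu| = 3$ and $\mu \ne m$, giving an explicit finite list for each possible multiplicity vector $m \in \{(1, 1, 1), (2, 1)\}$ (up to permutation). The plan here is to analyze, in each case, the values $\mFh(\ell)$ and $w_0^C(\ell)$ forced by the assumed conditions together with (\ref{eq:P2}) and the matroid rank inequality (\ref{eq:matroid}), and to verify that the corresponding signed ranks in the decomposition of $\pi^C_{6, 3}$ sum non-negatively. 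For the finite-growth statement, maximal rank of $\mFM_{1, -1}(C, o)$, equal to $|m| - 1$ by Definition \ref{def:maxrk}, corresponds via the identifications above to $\pi^C_{4, 3} = -3$ in the $\mu^C = 4$ case and $\pi^C_{6, 3} = -2$ in the $\mu^C = 3$ case; the general bounds stated at the end of the theorem then follow from the universal rank bound on $\mFM_{1, -1}(C, o)$ combined with the contribution analysis used to resolve the main obstacle.
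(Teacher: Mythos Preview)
Your proposal follows the same overall strategy as the paper: translate Theorems~\ref{thm:finite} and~\ref{thm:tame} via Corollary~\ref{cor:Etopoin}, using $\mu^C = |m|$ (from Proposition~\ref{prop:mot}(c)) and $\min w_0^C = \ord f^C$ as the basic dictionary. The treatment of the finite-type statements, of conditions (a), (c), (d), and of the $\mu^C = 4$ subcase of (b) matches the paper's almost exactly.

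The one genuine divergence is in the $\mu^C = 3$ subcase of condition (b), specifically the direction ``$\mFM_{1,-1}(C,o) = 0 \Rightarrow \pi^C_{6,3} \ge 0$''. You propose a case analysis over the finitely many $\ell \in \cS_C$ with $|\ell| = 6$, carried out under the auxiliary assumptions (a), (c), (d) and using (\ref{eq:matroid}). The paper instead invokes Lemma~\ref{lem:spectvan} directly: the strict inequality in that lemma's proof gives $w_0^C(\ell) > -2$ (hence $\mFh(\ell) \ge 3$) for every $\ell \ne 2m$ with $|\ell| = 6$, so any contribution to $t^6 q^3$ from such $\ell$ comes from a $k = 0$ term in $PE$ and is therefore nonnegative. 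This is both shorter and \emph{unconditional}---it does not require (a), (c), (d)---which matters because the parenthetical bound ``$\mu^C = 3$ implies $\pi^C_{6,3} \ge -2$'' is asserted \emph{in general}, not just in the tame situation. Your conditional case analysis does not obviously cover this; you would either have to redo it without the extra hypotheses or, more simply, observe that the needed weight bound is already supplied by Lemma~\ref{lem:spectvan}.
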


\begin{proof}
    As a first step, we note that the reasoning of Remark \ref{rem:hilbfrompoin} and the containment $\cS \setminus \{0\} \subseteq m + \bN^r$ together imply that $\mu^C = |m|$.
    
    The statements on finite CM type will now follow from Theorem \ref{thm:finite}; the general characterization in terms of $\ord f^C$ is immediate from Corollary \ref{cor:Etopoin}(b), and the statement on $A_n$-dominance follows from this and our characterization of $\mu^C$. For the statements on $D_n$ and $E_n$, it is enough to note, again by Remark \ref{rem:hilbfrompoin} and the containment $\cS \setminus \{0\} \subseteq m + \bN^r$, that any nonzero term of $P_C^m(\pvv{t}, q)$ sent by the substitution $t_i \to t$ to one of $t$-degree $|m|$ is a multiple of $\pvv{t}^m$; since $\min w_0^C = \ord f^C = -1$ and so $|m| = 3$ in these cases, the statements are now immediate from the theorem by Corollary \ref{cor:Etopoin}(a)(i).
    
    For the tameness statement, we note that each of our Conditions (a--d) is equivalent to the corresponding condition of Theorem \ref{thm:tame}. For (a), (c), and (d), this is immediate from our preceding discussion, and likewise (b) follows in the case $\mu^C = 4$ by the above reasoning and Corollary \ref{cor:Etopoin}(a)(iii). The reasoning for (b) in the case $\mu^C = 3$ is similar, but the argument concerning the relationship between $\pvv{t}^{2m}q^3$ and $t^{2|m|}q^3$ is more complicated. By Lemma \ref{lem:spectvan}, any $\ell \in \bN^r \setminus \{2m\}$ such that a nonzero term with $\pvv{t}$-exponent $\ell$ is sent by our substitution to $t^{2|m|}q^3$ must satisfy $\mFh(\ell) = 3$; hence the term must arise from a monomial $\pvv{T}^\ell Q^n h^k$ with $k = 0$ in $PE(\pvv{T}, Q, h)$ under the substitution of Proposition \ref{prop:Etopoin} and so must have a positive coefficient. Thus, if $t^{2|m|}q^3$ has a negative coefficient, it must be the case that $(E_{-6,7}^1)_2 \ne 0$ by Corollary \ref{cor:Etopoin}(a)(ii); on the other hand, if $(E_{-6,7}^1)_2 \ne 0$, then Lemma \ref{lem:spectsemigp} and Remark \ref{rem:hilbfrompoin} imply that no such $\ell$ can exist to begin with, so \ref{cor:Etopoin}(a)(ii) gives us that the coefficient of $t^{2|m|}q^3$ is $-\rk {(E_{-6,7}^1)_2}$, as desired.
    
    The statement on finite growth follows from the corresponding statement of Theorem 2 by the identifications we have already established, and we likewise obtain the parenthetical statement on bounds from the discussion motivating Definition \ref{def:maxrk}.
\end{proof}

Note that the appropriate reformulation of Remark \ref{rem:altcond} holds for these conditions as well.

\begin{rem}
    The preceding theorem tells us that the CM type of $(C, o)$ can be deduced not just from the usual multivariable motivic Poincar\'e series $P_C^m(\pvv{t}, q)$, $P_{C_i}^m(\pvv{t}, q)$, and $P_{\hat C_i}^m(\pvv{t}, q)$, but even from the univariate versions $P_C^m(t, q)$, $P_{C_i}^m(t, q)$, and $P_{\hat C_i}^m(t, q)$. Thus, in these circumstances, we find that the univariable motivic Poincar\'e polynomial captures the pertinent information about its multivariate counterpart while being potentially easier to compute (insofar as it requires only the information of the spectral sequence's $E^1$-page simpliciter, without the direct sum decomposition); it would be interesting to determine the extent to which these observations hold true in other applications. Likewise, the significance of the substituted series $P_C^m(t, q)|_{t \to \omega^{-1},\, q \to \omega^2}$, particularly its relationship to the lattice homology, seems worthy of further investigation; however, we leave such matters to future work.
\end{rem}

\section{Plane curve singularities. Further remarks and examples}
\label{sec:exceptional}

\subsection{}

As discussed in Subsection \ref{subsec:class}, the classification of reduced plane curve singularity germs (i.e., hypersurface singularity germs of corank at most 2) of \rdash modality at most 2 can be found in \cite{AGV}. Our results thus far suggest that this classification can be reframed in terms of lattice-homological invariants; here we sketch the beginnings of such a description, leaving the project of elaborating on them systematically to future work.

The simple plane curve singularity germs---that is, those with \rdash modality 0---are precisely the $ADE$ singularities, and so Corollary \ref{cor:ADEGor} shows that these can be characterized as the plane curve singularity germs such that $\min w_0 \ge -1$. Indeed, we can see by the computations of Section \ref{sec:comput} that, as in Theorem \ref{thm:finite}, we can use the lattice homology and spectral sequence to obtain a more granular classification; the $A_n$ singularities are precisely the plane curve singularities such that $\min w_0 = 0$, the $D_n$ are those such that $\min w_0 = -1$ and $\mFM_{1,0} \ne 0$, and the $E_n$ are those such that $\min w_0 = -1$ and $\mFM_{1,0} = 0$.

We recall that the \rdash unimodal germs are grouped into the parabolic, hyperbolic, and exceptional families, with the $T_{pq}$ germs comprising the first two. We have the following characterizations:

\begin{lem}
    Let $(C, o) \subset (\bC^2, o)$ be a reduced plane curve germ. Then:
    \begin{itemize}
        \item $(C, o)$ is \rdash unimodal parabolic if and only if it is tame and of finite growth.
        \item $(C, o)$ is \rdash unimodal hyperbolic if and only if it is tame and of infinite growth.
        \item $(C, o)$ is \rdash unimodal exceptional if and only if it is not tame but satisfies $\min w_0 = -2$ (equivalently, $\min w_0 \le -2$) and $\delta(C, o) = \eu(\bH_*(C, o)) \le 7$ (equivalently, $\delta \in \{6, 7\}$).
    \end{itemize}
\end{lem}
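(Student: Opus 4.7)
The plan is to derive all three characterizations from the results already proven in this paper together with Arnold's classification of reduced plane curve germs by \rdash modality (\cite{AGV}). The first two bullets hinge on the claim that a plane curve germ is of tame Cohen--Macaulay type if and only if it is itself a $T_{pq}$ germ, with the partition $\{T_{4,4}, T_{3,6}\}$ versus the remaining $T_{pq}$ matching parabolic versus hyperbolic and, by Proposition \ref{prop:tamedom}, finite versus infinite growth. The forward implication is immediate, since a $T_{pq}$ germ dominates itself. For the converse, any tame plane curve germ $(C,o)$ birationally dominates some $T_{pq}$ by Proposition \ref{prop:tamedom}; since $\edim(C,o) \le 2$, the argument sketched in Example \ref{ex:dom}(1)---normalizing each branch and then extending to an isomorphism of ambient $(\bC^2, o)$'s via the spanning of tangent spaces---forces this dominance map to be an isomorphism. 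Theorem \ref{thm:tame} and its finite-growth clause then yield the first two bullets.

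For the forward direction of the third bullet, I would proceed by direct inspection of the eight \rdash unimodal exceptional plane curve germs from Arnold's list, namely $E_{12}, E_{13}, E_{14}, Z_{11}, Z_{12}, Z_{13}, W_{12}, W_{13}$. Each has multiplicity $3$ (the $E$-type cases) or $4$ (the $Z$- and $W$-type cases); none is of $T_{pq}$ shape, so none is tame by the equivalence above. When $|m| = 4$, the equality $\min w_0 = -2$ follows from (\ref{eq:multw}) applied at $\ell = m$, and when $|m| = 3$ it follows from an explicit semigroup computation---for instance, $E_{12} = \{x^3 + y^7 = 0\}$ has $\cS = \langle 3, 7\rangle$, whence $\mFh(6) = 2$ and thus $w_0(6) = -2$; the remaining cases are similar. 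Finally, each of these germs has Milnor number equal to its numerical subscript by Arnold's convention, so the plane-curve identity $\mu = 2\delta - r + 1$ together with the branch counts $r$ readable from the defining equations yields $\delta = 6$ for $E_{12}, Z_{11}, W_{12}$ and $\delta = 7$ for the other five.

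For the reverse direction, suppose $(C, o)$ is a plane curve germ which is not tame, has $\min w_0 \le -2$, and has $\delta \le 7$. Theorem \ref{thm:fintypewtbd} shows $(C, o)$ is not of finite Cohen--Macaulay type, so by Proposition \ref{prop:finitedom} it is not simple; combined with non-tameness, it is neither an $ADE$ germ nor a $T_{pq}$. If $(C, o)$ were of \rdash modality at least two, the infinitely-near-points formula $\delta = \sum_p \binom{m_p}{2}$ would combine with the hypothesis $\delta \le 7$ to give $|m| \le 4$; a case-by-case check of Arnold's bimodal families of multiplicity at most $4$ then shows each has $\delta \ge 8$, a contradiction. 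Hence $(C, o)$ is \rdash unimodal, and being neither simple nor $T_{pq}$, it is exceptional. The parenthetical equivalences $\min w_0 \le -2 \Leftrightarrow \min w_0 = -2$ and $\delta \le 7 \Leftrightarrow \delta \in \{6, 7\}$ then follow from the forward direction just established. The main obstacle is the bimodal-exclusion step, which requires enumerating the bimodal plane curve germ families of multiplicity at most $4$ in \cite{AGV} and verifying the $\delta \ge 8$ bound for each; this is tedious but routine, and can be streamlined by the standard lower bounds on Milnor numbers of bimodal plane curves combined with the identity $\mu = 2\delta - r + 1$.
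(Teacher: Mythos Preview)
Your argument for the first two bullets has a genuine gap. You claim that if a plane curve $(C,o)$ birationally dominates some $T_{pq}$, the reasoning of Example~\ref{ex:dom}(1) forces the dominance map to be an isomorphism. But that argument is specific to $T_{4,4}$: its four branches are \emph{smooth}, so the restriction of a birational map to each branch is automatically an isomorphism, and the four distinct tangent directions span the ambient tangent plane. For a general $T_{pq}$ this fails---e.g.\ $T_{5,5}$ has two singular branches (each of type $A_4$), and $T_{3,2b+3}$ has a branch of multiplicity $2$. A plane curve $(C,o)$ dominating such a $T_{pq}$ could have strictly smoother branches, and then the branch-wise maps are not isomorphisms and there is no way to extend to an ambient isomorphism. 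The paper avoids this entirely: it cites Schappert's theorem that plane curves of \rdash modality at least $2$ are never tame, and then verifies directly that each of the eight exceptional unimodal germs violates one of the conditions of Theorem~\ref{thm:tame}. This is the missing ingredient in your approach.

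There is a second gap in the forward direction of the third bullet. When $|m|=4$, applying (\ref{eq:multw}) at $\ell=m$ gives only $w_0(m)=-2$, hence $\min w_0 \le -2$; it does not give equality. Likewise for $|m|=3$ your semigroup computation exhibits a single lattice point of weight $-2$ without bounding $\min w_0$ from below. The paper handles this by computing that the lattice homology of each exceptional germ is isomorphic to that of some specific $T_{pq}$, which has $\min w_0 = -2$ by Proposition~\ref{prop:minT} and Theorem~\ref{thm:fintypewtbd}. Finally, in your reverse direction you only exclude the bimodal families, but you need to rule out \emph{all} germs of modality $\ge 2$; the paper closes this by invoking upper semicontinuity of the Milnor number together with the density of the bimodal locus in the higher-modality stratum.
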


(The tameness and growth conditions can be phrased in terms of lattice homology groups and groups of minimal spectral cycles as per Theorem \ref{thm:tame}; we use the present formulation for the sake of brevity.)

\begin{proof}
    Schappert proved in \cite{Schap} that the plane curve singularities of modality 2 or larger are not tame; consequently, verifying that none of the exceptional singularities are tame is enough to establish that the parabolic and hyperbolic germs are the only tame plane curve singularities. Recall from \cite{AGV} that the list of all such exceptional germs is given by $E_{12}, E_{13}, E_{14}, Z_{11}, Z_{12}, Z_{13}, W_{12}, W_{13}$, where the subscripts give the curves' Milnor numbers; to see that these are not tame, we make the following observations about the conditions of Theorem \ref{thm:tame}, largely by direct inspection of their equations:
    \begin{itemize}
        \item $E_{12}$, $E_{14}$, and $W_{12}$ are unibranch, and so Condition (b) fails. $E_{13}$ has two branches, but a lattice computation shows that Condition (b) fails in this case as well.
        \item Each of $Z_{11}$ and $W_{13}$ is the union of a smooth branch with an $E_6$ singularity, so Condition (c) fails. Likewise, one the branches of $Z_{13}$ is $E_8$, which also violates Condition (c).
        \item Removing one of the three branches of $Z_{12}$ gives us $E_7$, so Condition (d) fails.
    \end{itemize}
    To distinguish between the parabolic and hyperbolic singularities, it suffices to show that no hyperbolic singularity birationally dominates a parabolic one by Proposition \ref{prop:tamedom}. Since the $T_{4,4}$ and $T_{3,6}$ singularities have 4 and 3 branches respectively with both delta invariants equal to 6, Lemma \ref{lem:FEAT}(b) and (f) together exclude this possibility in all cases except $T_{4,5}$; since this is a 4-branch singularity, it is now enough to adapt the reasoning of Example \ref{ex:dom}(1) to show that it cannot dominate $T_{4, 4}$. This concludes the proof of the first two characterizations.

    For the third, recall that, for a reduced plane curve germ with $r$ irreducible components and Milnor number $\mu$, $2\delta = \mu + r - 1$ (see, e.g., \cite{Miln}); thus it can be seen that $E_{12}$, $Z_{11}$, and $W_{12}$ have $\delta = 6$ and the other exceptional singularities have $\delta = 7$. Moreover, a computation of the sort used in Section \ref{sec:comput} shows that in fact $\bH_*(E_{12}, o) \cong \bH_*(T_{3,6}, o) \cong \bH_*(T_{3,7}, o)$, $\bH_*(E_{13}, o) \cong \bH_*(E_{14}, o) \cong \bH_*(T_{3,8}, o) \cong \bH_*(T_{3,9}, o)$, $\bH_*(Z_{11}, o) \cong \bH_*(W_{12}, o) \cong \bH_*(T_{4,4}, o) \cong \bH_*(T_{4,5}, o)$, and $\bH_*(Z_{12}, o) \cong \bH_*(Z_{13}, o) \cong \bH_*(W_{13}, o) \cong \bH_*(T_{4,6}, o) \cong \bH_*(T_{4,7}, o) \cong \bH_*(T_{5,6}, o) \cong \bH_*(T_{5,7}, o)$, so the exceptional germs have $\min w_0 = -2$ as well by Theorem \ref{thm:tame}. Since we have already argued these germs are not tame, the forward direction follows. To establish the reverse, it suffices to note that, in the classification of \cite{AGV}, all bimodal plane curve germs have Milnor number at least 15. By the upper semicontinuity of the Milnor number and the density of the bimodal germs in the space of plane curve singularities of modality at least 2, it follows that the same holds true for all such germs. Therefore, we have $\delta \ge \lceil \mu/2 \rceil \ge 8$ in these cases, which gives the desired result since the condition $\min w_0 = -2$ excludes the simple germs by Theorem \ref{thm:finite}.
\end{proof}

\begin{rem}
    We make some observations:
    \begin{enumerate}[label=(\alph*)]
        \item Although the exceptional unimodal germs are characterized among the non-tame plane curve germs of infinite CM type by the condition $\delta \le 7$, and indeed this inequality holds for the parabolic germs as well, no such bound applies to the hyperbolic germs. In the setting of Subsection \ref{subsec:Tpqodd}, for example, the equality $|c| = 2\delta$ and our computations give us $\delta = a + b + 4$, which can be made arbitrarily large by taking appropriate values of $a$ and $b$.

        \item The exceptional germs can be used to illustrate the necessity of the various conditions of Theorem \ref{thm:tame}; for example, the isomorphisms of lattice homology groups discussed in the preceding proof show that lattice homology alone cannot detect tameness. Likewise, we can see by computations that, e.g., $Z_{11}$ satisfies Condition (b) as well as Condition (a), so knowing the group of minimal spectral 1-cycles of weight $-1$ of the curve in question as well is also insufficient; that is, Conditions (c) and (d) cannot be dispensed with.

        \item The condition $\min w_0 = -2$ is not enough to guarantee (\rdash)unimodality; for example, the $W_{1,0}$ singularity defined by $x^4 + y^6 + (a_0 + a_1y)x^2y^3$ with ${a_0}^2 \ne 4$ (\cite{AGV}) is \rdash bimodal but has $\min w_0 = -2$, $\mu = 15$, and $\delta = 8$.
        
        \item On the other hand, not every bimodal germ has $\min w_0 = -2$; for example, the $E_{18}$ singularity given by $x^3 + y^{10} + (a_0 + a_1y)xy^7$ (\cite{AGV}) has $\min w_0 = -3$, $\mu = 18$, and $\delta = 9$.

        \item The family of \kdash unimodal germs is much larger than the \rdash unimodal one, and we can see that is not subject to the same weight condition; for example, the $W_{18}$ singularity of \cite{AGV} is \kdash unimodal with $\min w_0 = -3$.
    \end{enumerate}
\end{rem}

\end{document}